\newtheorem{theorem}{Theorem}[section]
\newtheorem{lemma}[theorem]{Lemma}
\newtheorem{corollary}[theorem]{Corollary}
\newtheorem{proposition}[theorem]{Proposition}
\theoremstyle{definition}
\newtheorem{definition}[theorem]{Definition}
\newtheorem{example}[theorem]{Example}
\theoremstyle{remark}
\newtheorem{remark}[theorem]{Remark}
\DeclareMathOperator{\In}{{\bf in}} 
\DeclareMathOperator{\Ter}{{\bf ter}}
\DeclareMathOperator{\supp}{supp}
\DeclareMathOperator{\Gap}{Gap} 
\DeclareMathOperator{\Span}{Span} 
\DeclareMathOperator{\Hom}{Hom} 
\DeclareMathOperator{\Ext}{Ext} 
\DeclareMathOperator{\Id}{Id}
\DeclareMathOperator{\PF}{PF}
\newcommand{\Rep}{\mathbf{Rep}}
\newcommand{\CB}{\mathcal{B}}
\title{A braid group action on parking functions} 
\author{Eugene Gorsky}
\address{Department of Mathematics, Columbia University.\newline
2990 Broadway New York, NY 10027.}
\email{egorsky@math.columbia.edu}
\author{Mikhail Gorsky}
\address{Steklov Mathematical Institute\newline 
8 Gubkina Street, Moscow, Russia 119991.}
\address{Universit\'e Paris Diderot -- Paris 7\newline
75205 PARIS CEDEX 13}
\email{mgorsky@math.jussieu.fr} 
\keywords{Braid group, Distinguished basis, exceptional collection, parking function\\
MSC: 05C30, 05A19, 16G20}
\begin{document}

\begin{abstract}
We construct an action of the braid group on $n$ strands on the set of parking functions of $n$ cars
such that elementary braids have orbits of length 2 or 3. The construction is motivated by a theorem of  Lyashko and  Looijenga stating that the number of the distinguished bases for $A_n$ singularity equals $(n+1)^{n-1}$ and thus equals the number of parking functions. We construct an explicit bijection between the set of parking functions and the set of distinguished bases, which allows us to translate the braid group action on distinguished bases in terms of
parking functions. 
\end{abstract}

\maketitle

\section{Introduction}

The distinguished bases in the vanishing cohomology of the complex hypersurface singularity were introduced by  Gabrielov and  Lazzeri \cite{gabrielov,lazzeri} who followed the ideas of  Milnor \cite{milnor}. They were widely studied in singularity theory (e.g.  \cite{gusein,gusein2}), for more complete references see \cite{book} and citations therein.
 
Geometric origin of distinguished bases made them useful in mathematical physics (e.g. \cite{vc}). In \cite{lya} and \cite{loo}, O. Lyashko and E. Looijenga independently computed the number of distinguished bases for simple singularities as the degree of a certain covering.
For example, the singularity of type $A_n$ has $(n+1)^{n-1}$ distinguished bases, where basic vectors are considered up to a sign.

In this note, we use combinatorial structures to describe the distinguished bases for $A_n$ singularities. It is well known (e.g \cite{book}) that the vanishing cohomology for these singularities corresponds to the $A_n$ root lattice, which carries a non-symmetric bilinear Seifert form $\left\langle  \cdot,\cdot\right\rangle$ such that
$$\left\langle x, y \right\rangle + \left\langle y,x \right\rangle=(x,y).$$
The following theorem gives a combinatorial description of distinguished bases.

\begin{theorem}(\cite{gusein})
A basis in $A_n$ root lattice is distinguished if and only if the basis vectors are roots of this lattice, and the matrix of the Seifert form is upper-triangular in this basis.
\end{theorem}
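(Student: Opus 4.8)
The plan is to deduce the characterization from two facts about the natural braid group (mutation) action on ordered bases. Recall that, geometrically, a distinguished basis arises from a distinguished system of vanishing paths, that changing the paths acts through Picard--Lefschetz mutations, and that any two distinguished bases are related by such mutations together with sign changes; equivalently, the distinguished bases form a single orbit of the braid group action. I fix once and for all the standard distinguished basis to be the simple roots $e_1,\dots,e_n$, for which $\langle e_i,e_i\rangle=1$, $\langle e_i,e_{i+1}\rangle=-1$, and all other entries vanish, so its Seifert matrix is upper triangular with ones on the diagonal. Writing $P$ for the property ``the basis consists of roots and its Seifert matrix is upper triangular,'' the theorem asserts exactly that the set of bases having property $P$ equals the braid orbit of $e_1,\dots,e_n$. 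I would establish this from (i) invariance of $P$ under mutations and sign changes, and (ii) transitivity of the braid action on the set of bases with property $P$.

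For the ``only if'' direction, a vanishing cycle $\delta$ has self-intersection $(\delta,\delta)=2$, hence is a root, and then $2\langle\delta,\delta\rangle=(\delta,\delta)=2$ forces every diagonal entry of the Seifert matrix to equal $1$; so the only content is upper-triangularity. I would prove the stronger invariance claim (i): normalize the braid generator so that
\[
\sigma_i:\ (\dots,\delta_i,\delta_{i+1},\dots)\ \longmapsto\ (\dots,\delta_{i+1},\,\delta_i-(\delta_i,\delta_{i+1})\delta_{i+1},\dots),
\]
leaving the other vectors fixed. The new second vector is the reflection of a root in a root, hence again a root. Triangularity survives by a local computation: using $\langle x,y\rangle+\langle y,x\rangle=(x,y)$ one finds, with $a=\langle\delta_i,\delta_{i+1}\rangle=(\delta_i,\delta_{i+1})$, that $\langle \delta_i-a\delta_{i+1},\,\delta_{i+1}\rangle=a-a\cdot 1=0$, while bilinearity shows no new nonzero entry appears below the diagonal in the interactions with the untouched vectors; sign changes clearly preserve $P$. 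Since the standard basis has property $P$ and the distinguished bases are its braid orbit, every distinguished basis has property $P$.

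The ``if'' direction is where the real work lies, namely fact (ii): any basis with property $P$ can be carried to $e_1,\dots,e_n$ by mutations. I would argue by induction on $n$. Given roots $v_1,\dots,v_n$ with upper-triangular Seifert matrix, the symmetric Gram matrix has diagonal $2$ and, since distinct basis roots of $A_n$ pair to a value in $\{-1,0,1\}$, its off-diagonal entries are the upper-triangular Seifert entries. The aim is to use mutations to bring a ``rank-one'' root (one eligible to sit at an end as a simple root) into the last slot; the vanishing $\langle v_n,v_i\rangle=0$ for $i<n$ then confines the remaining vectors to a rank $n-1$ root sublattice of type $A_{n-1}$ on which the restricted Seifert form again has property $P$, closing the induction. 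Once (ii) holds, any basis with property $P$ lies in the braid orbit of the standard basis, which is distinguished, and mutations of distinguished bases are distinguished; hence the basis is distinguished.

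The main obstacle is precisely the transitivity step (ii): controlling the mutations so that the reduction terminates and genuinely splits off a sub-root-system of type $A_{n-1}$. Concretely one must supply a straightening procedure --- for instance a statistic on bases, such as a weighted sum of the heights of the $v_i$, that some admissible mutation strictly decreases whenever the basis is not yet standard --- and check that the process cannot stall before reaching $e_1,\dots,e_n$. An appealing alternative is to import (ii) from the known transitivity of the braid group on complete exceptional sequences over the path algebra of the $A_n$ quiver, via the dictionary sending roots to dimension vectors of indecomposables and the Seifert form to the Euler form $\dim\Hom-\dim\Ext$; under this dictionary property $P$ is exactly the exceptional-sequence condition. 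I would, however, resist proving (ii) by merely matching cardinalities with the Lyashko--Looijenga count $(n+1)^{n-1}$, since that count is re-derived later through parking functions and invoking it here would be circular.
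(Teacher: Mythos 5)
The paper never proves this statement: it is quoted from Gusein--Zade \cite{gusein}, and in Section~2 the combinatorial condition (basis vectors are roots, Seifert matrix upper-triangular) is simply adopted as the \emph{definition} of a distinguished basis. So your proposal must stand on its own. Its ``only if'' half does: distinguished bases are, essentially by construction, a single orbit of the braid group and sign changes acting on the standard basis $e_1,\dots,e_n$; your computation $\left\langle a_k-\left\langle a_k,a_{k+1}\right\rangle a_{k+1},\,a_{k+1}\right\rangle=\left\langle a_k,a_{k+1}\right\rangle(1-1)=0$, plus bilinearity for the untouched entries, shows this orbit stays inside the set of bases with property $P$, and the observation that the mutation coincides with the reflection in $a_{k+1}$ exactly because $\left\langle a_{k+1},a_k\right\rangle=0$ correctly gives that the new vector is a root.

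The ``if'' half contains a genuine gap, which you flag yourself: transitivity of the braid action on the set of property-$P$ bases is asserted, not proved, and the induction you sketch does not close as written. If $v=e_k+\cdots+e_m$ occupies the last slot, upper-triangularity puts the remaining $n-1$ vectors in the right orthogonal complement $\{y:\left\langle v,y\right\rangle=0\}$, which the paper computes to be of type $A_{m-k}\oplus A_{n-m+k-1}$; this is of type $A_{n-1}$ only when $v$ is a simple root at an end of the diagram, and you supply no procedure, and no strictly decreasing statistic, guaranteeing that mutations can move such a root into the last slot. To repair this you would either have to run the induction over arbitrary direct sums of type-$A$ root systems, or actually execute the reduction to the Crawley--Boevey/Ringel transitivity theorem for complete exceptional sequences of the $A_n$ quiver --- a legitimate and non-circular route, since the dictionary between property-$P$ bases and exceptional sequences needs only the Euler-form/Seifert-form comparison --- but you leave it as ``an appealing alternative.'' Incidentally, the counting argument you reject is not in fact circular: the Lyashko--Looijenga count of distinguished bases is proved independently as a covering degree, and the paper's bijection of property-$P$ bases with parking functions uses only the combinatorial definition, so together with your ``only if'' inclusion the two counts of $(n+1)^{n-1}$ would force equality of the two sets. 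As it stands, however, the proposal is a plan with its hardest step left open rather than a proof.
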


\begin{corollary}
Switching a sign of some vectors in a distinguished basis will transform it to another distinguished basis.
\end{corollary}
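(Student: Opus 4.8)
The plan is to prove the corollary directly from the characterization in the preceding theorem, treating sign-switching as a diagonal change of basis with entries $\pm 1$. Let $e_1,\dots,e_n$ be a distinguished basis of the $A_n$ root lattice, and let $\epsilon_1,\dots,\epsilon_n \in \{+1,-1\}$ record the signs we switch, so that the new basis is $f_i = \epsilon_i e_i$. It suffices to verify the two conditions of the theorem for $(f_1,\dots,f_n)$: that each $f_i$ is a root, and that the matrix of the Seifert form is upper-triangular in this basis.

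First I would check that each $f_i$ is a root. The set of roots of the $A_n$ lattice is invariant under the involution $x \mapsto -x$ (roots occur in pairs $\pm\alpha$), so $-e_i$ is a root whenever $e_i$ is; hence every $f_i = \epsilon_i e_i$ is a root. Next, I would exploit bilinearity of the Seifert form. Writing $M_{ij} = \left\langle e_i, e_j \right\rangle$ for the original matrix and $M'_{ij} = \left\langle f_i, f_j \right\rangle$ for the new one, bilinearity gives $M'_{ij} = \epsilon_i \epsilon_j M_{ij}$. For $i > j$ the entry $M_{ij}$ vanishes by upper-triangularity of $M$, so $M'_{ij} = \epsilon_i \epsilon_j \cdot 0 = 0$ as well. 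Thus $M'$ is again upper-triangular, and by the theorem the basis $(f_1,\dots,f_n)$ is distinguished.

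It is worth recording that the diagonal is unchanged: since $\epsilon_i^2 = 1$, we have $M'_{ii} = \left\langle e_i, e_i \right\rangle$, and applying the relation $\left\langle x,x\right\rangle + \left\langle x,x\right\rangle = (x,x) = 2$ to the root $e_i$ shows each diagonal entry equals $1$ before and after the sign change. This confirms that the whole vanishing-and-normalization pattern of the matrix is preserved.

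I do not expect a substantive obstacle here: the entire content is the elementary observation that conjugating a matrix by a diagonal $\pm 1$ matrix preserves its pattern of zeros, which is exactly what bilinearity of $\left\langle \cdot,\cdot\right\rangle$ encodes. The only point requiring care is to invoke bilinearity explicitly (rather than, say, a symmetry of the form, which the Seifert form lacks) and to note that negation keeps us inside the set of roots; both are immediate.
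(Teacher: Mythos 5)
Your proof is correct and is exactly the argument the paper leaves implicit: the corollary follows immediately from the characterization theorem, since negation preserves the property of being a root and bilinearity of the Seifert form preserves the upper-triangular zero pattern (the entries merely acquire factors $\epsilon_i\epsilon_j$). The only cosmetic remark is that you reuse $e_i$ for the distinguished basis vectors, whereas the paper reserves $e_i$ for the simple roots.
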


\begin{definition} 
A {\it parking}, or  {\it preference} function on $n$ elements is a function
$$
f : \left\{1, \ldots, n\right\} \rightarrow \left\{1, \ldots, n\right\}
\ \mbox{\rm such that}\ |f^{-1}(\left\{1, \ldots, k\right\})| \geq k\ \forall k.
$$
We denote the set of all parking functions by $\PF_n$.
\end{definition}

Parking functions were introduced by Konheim and  Weiss in \cite{kw}, and were studied in different combinatorial and algebraic setups (e.g. \cite{dots,haglund,stanley}).
It is well known that the number of the parking functions of order $n$ equals to  $(n+1)^{n-1}$.

\begin{definition}
Let $e_i$ denote the positive simple roots of the $A_n$ root system,
and let $e=e_{i}+\ldots+e_{j}$ be a root.
We define its ``initial point'' as $\In(e)=i$.
For a basis $A=\{a_1,\ldots, a_n\},$ we define the ``initial vector'' 
as an integer sequence
$\In(A)=(\In(a_1),\ldots,\In(a_n)).$
\end{definition}

The function $\In(e)$ can be understood a filtration on the Cartan subalgebra of type $A_n$.  
The following theorem shows that one can reconstruct a distinguished basis from the
filtration levels of basic vectors.

\begin{theorem}
The ``initial vector'' map is a bijection between the set of distinguished bases (of positive roots) and the set of parking functions. 
\end{theorem}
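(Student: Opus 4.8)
The plan is to show first that $A\mapsto\In(A)$ lands in $\PF_n$, and then that it is injective. By the Lyashko--Looijenga count there are $(n+1)^{n-1}$ distinguished bases up to sign, and by the Corollary each sign-class contains a unique representative all of whose vectors are positive roots; hence the set of distinguished bases of positive roots has cardinality $(n+1)^{n-1}=|\PF_n|$. Since domain and codomain are finite of equal size, any injection between them is automatically a bijection, so injectivity is all that remains after well-definedness. Throughout I identify a positive root $e_i+\cdots+e_j$ with the interval $[i,j]=\{i,\dots,j\}$ of its support, so that $\In=i$, $\Ter=j$, and ``$x\in a$'' means $i\le x\le j$.

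Well-definedness uses only linear independence, not the Seifert form. Every root with initial point $\ge k+1$ lies in $\Span(e_{k+1},\dots,e_n)$, a subspace of dimension $n-k$. Since $a_1,\dots,a_n$ are linearly independent, at most $n-k$ of them can have $\In(a_p)\ge k+1$; equivalently $|\{p:\In(a_p)\le k\}|\ge k$ for every $k$, which is exactly the defining inequality $|f^{-1}(\{1,\dots,k\})|\ge k$ of a parking function. Thus $\In(A)\in\PF_n$ for every basis $A$ of positive roots.

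The combinatorial heart is an explicit formula for the Seifert pairing of two roots regarded as intervals: a direct computation gives $\langle a_p,a_q\rangle=\mathbf{1}[\Ter(a_q)\in a_p]-\mathbf{1}[\In(a_q)-1\in a_p]$. Upper-triangularity, i.e. $\langle a_p,a_q\rangle=0$ for all $p>q$, then translates into a purely combinatorial incidence condition: for $p>q$ the two indicators must agree, so the endpoints $\Ter(a_q)$ and $\In(a_q)-1$ lie on the same side of the interval $a_p$. Two consequences I would record immediately are that basis vectors sharing an initial point must have strictly decreasing terminal points as their position grows (so the order within each ``initial class'' is forced), and that no earlier vector may straddle the interval cut off by a later one.

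Injectivity I would prove by reconstructing the terminal points, hence the whole ordered basis, from $\In(A)$ by induction on $n$. Under the standard correspondence between positive roots and edges of the complete graph $K_{n+1}$ (the root $[i,j]$ being the edge $\{i,j+1\}$), a basis of positive roots is precisely a spanning tree, and the incidence condition above says that the last vector $a_n$ is the unique edge crossing the cut it determines; this should pin down $\Ter(a_n)$ and split $a_1,\dots,a_{n-1}$ into the edges lying \emph{inside} the interval $[\In(a_n)+1,\Ter(a_n)]$ and those lying \emph{outside}, to which the inductive hypothesis applies. The hard part will be exactly this induction: the ``outside'' part is not a root subsystem supported on a consecutive block of simple roots, since its edges may jump over the inside block, so the inductive statement must be formulated for a slightly more general configuration, and one must verify that the spanning-tree condition together with the incidence constraints determines $\Ter(a_n)$ uniquely rather than merely constraining it. Once this reconstruction is shown to be unambiguous, injectivity, and therefore the theorem, follows.
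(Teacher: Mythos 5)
Your setup is fine: the well-definedness argument is exactly the paper's (linear independence forces $|f^{-1}(\{1,\ldots,k\})|\ge k$), the indicator formula $\langle a_p,a_q\rangle=\mathbf{1}[\Ter(a_q)\in a_p]-\mathbf{1}[\In(a_q)-1\in a_p]$ is a correct restatement of Lemma \ref{L1}, and the counting reduction (Lyashko--Looijenga plus the sign observation gives $(n+1)^{n-1}$ distinguished bases of positive roots, so an injection into $\PF_n$ of the same cardinality is automatically a bijection) is a legitimate shortcut that the paper does not take --- the paper instead constructs an explicit inverse map, which proves surjectivity directly and, as a by-product, reproves the Lyashko--Looijenga count for $A_n$ combinatorially.

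The gap is in the injectivity argument, which is the entire content of the theorem, and you flag it yourself as ``the hard part.'' You must show that $\Ter(a_n)$ is \emph{determined} by the sequence $\In(A)$; what you establish is only that it is \emph{constrained}: each $a_p$ must be the unique tree edge crossing the cut given by $\{\In(a_p),\ldots,\Ter(a_p)\}$, but since all terminal points are unknown simultaneously, this is a system of coupled conditions rather than a recursion with a well-defined first step. You also do not formulate the strengthened induction hypothesis your peeling strategy needs: after removing $a_n$, the remaining roots correspond to arcs on a non-consecutive ground set (some of them straddle the deleted block), so they are not literally a distinguished basis of a smaller $A_m$; you notice this but leave it unresolved. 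The paper gets around exactly this difficulty by reconstructing in a different order --- roots are recovered in decreasing order of initial point (and, within an initial class, of position), so that when $a_k$ is treated every root with support strictly contained in $\supp(a_k)$ is already known. Lemma \ref{hole} (the part of $\supp(a_k)$ not covered by those smaller roots is a single simple root) and Lemma \ref{chain} (everything left of that gap is covered by roots placed after $a_k$, everything right of it by roots placed before) then pin down the endpoint uniquely and show that the greedy formula always yields a distinguished basis. To complete your version you would need either to adopt such an ordering or to prove analogues of Lemmas \ref{hole} and \ref{chain} adapted to your last-root-first induction; as written, the uniqueness of $\Ter(a_n)$ is asserted, not proved.
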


In Theorem \ref{T2}, we also provide a simple geometric procedure of reconstruction of a distinguished basis from a parking function.
We illustrate it on several examples and apply this procedure for permutations and non-decreasing parking functions.

The correspondence between parking functions and distinguished bases seems to be quite surprising since the set of distinguished bases carries a natural
action of the braid group (\cite{gabrielov,book}) while the set of parking functions carries a natural action of the symmetric group. 
We discuss the induced action of the braid group on the parking functions in Section \ref{sec:braid} and prove the following result:

\begin{theorem}
There exists an action of braid group $\CB_n$ on the set $\PF_n$. All orbits of the action of elementary braids either have length 2 or length 3. 
\end{theorem}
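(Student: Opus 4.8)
The plan is to transport the classical braid group action from distinguished bases to parking functions along the initial vector bijection, and then to reduce the orbit computation to a purely local problem in rank two.

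First I would establish existence. By the preceding theorem the map $\In$ identifies $\PF_n$ with the set of distinguished bases of \emph{positive} roots in the $A_n$ lattice, while the set of all ordered, signed distinguished bases already carries the standard Gabrielov action of $\CB_n$: the elementary braid $\sigma_i$ swaps the vectors in positions $i$ and $i+1$ and applies a Picard--Lefschetz reflection, sending $(\ldots, a_i, a_{i+1}, \ldots)$ to $(\ldots, a_{i+1}, s_{a_{i+1}}(a_i), \ldots)$ with $s_b(x) = x - (x,b)\,b$. The one thing to check is that this action descends to positive-root representatives, i.e. to $\PF_n$. For this I would verify that $\sigma_i$ intertwines the coordinatewise sign-flip action: replacing $a_i$ by $-a_i$ flips only the reflected output slot (since $s_b(-a)=-s_b(a)$), while replacing $a_{i+1}$ by $-a_{i+1}$ flips only the other output slot (since $s_{-b}=s_b$); concretely $\sigma_i$ commutes with a sign pattern up to transposing its $i,i{+}1$ entries. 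By the corollary every such sign flip again yields a distinguished basis, so the action is well defined on sign-equivalence classes. Each class has a unique all-positive representative, and this gives the desired action of $\CB_n$ on $\PF_n$.

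Next, since $\sigma_i$ modifies only coordinates $i$ and $i+1$ and fixes all others, the orbit of a parking function under $\langle \sigma_i\rangle$ is governed entirely by the pair $(a,b)=(a_i,a_{i+1})$. The crucial input is that for two distinct roots with $a\neq\pm b$ in the $A_n$ system one always has $(a,b)\in\{-1,0,1\}$, the values $\pm 2$ being excluded because they force $b=\pm a$. I would then run the case analysis, reading each result up to sign. If $(a,b)=0$ then $s_b(a)=a$, so $\sigma_i$ is a transposition of the pair and $\sigma_i^2=\Id$, giving an orbit of length $2$. If $(a,b)=1$ then $\sigma_i$ sends $(a,b)\mapsto(b,a-b)\mapsto(a-b,a)\mapsto(a,-b)$, whose positive-root representative is again $(a,b)$; the case $(a,b)=-1$ is symmetric, giving $(a,b)\mapsto(b,a+b)\mapsto(a+b,a)\mapsto(a,b)$. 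In both cases no intermediate pair equals the initial one (that would force $a=b$), so the orbit has length exactly $3$. Since every pair of basis roots falls into one of these three cases, every elementary braid has all orbits of length $2$ or $3$, which is the assertion.

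I expect the main obstacle to be the descent step rather than the arithmetic. One must check carefully that the geometrically defined braid action on signed ordered bases is genuinely compatible with the choice of positive-root representatives used to define $\In$ (including the correct sign in the Picard--Lefschetz formula), so that the induced map on $\PF_n$ is a bona fide $\CB_n$-action, and so that the orbit lengths computed up to sign are the honest orbit lengths on parking functions. Once this compatibility is in place, the rank-two reflection identities close up the argument immediately.
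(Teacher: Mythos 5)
Your proposal is correct and takes essentially the same route as the paper: transport the mutation action on distinguished bases through the initial-vector bijection, then reduce the orbit-length question to the rank-two sublattice spanned by $a_i$ and $a_{i+1}$, which is $A_1\oplus A_1$ when $\left\langle a_i,a_{i+1}\right\rangle=0$ (order $2$) and $A_2$ otherwise (order $3$). Your explicit verification of the descent to positive-root representatives addresses a point the paper leaves implicit (its mutation formula can output a negative root, e.g.\ $\alpha_1(e_2,e_{12})=(e_{12},-e_1)$, which the paper silently renormalizes in its $A_2$ picture), so that added care is a refinement of, not a departure from, the paper's argument.
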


We illustrate this action for $n=3$ in Figure \ref{fig:pf for a3}. The elementary braids $\alpha_1$ and $\alpha_2$ act along solid and dashed arrows respectively, and one can check that the braid relation $\alpha_1\alpha_2\alpha_1=\alpha_2\alpha_1\alpha_2$ is satisfied.

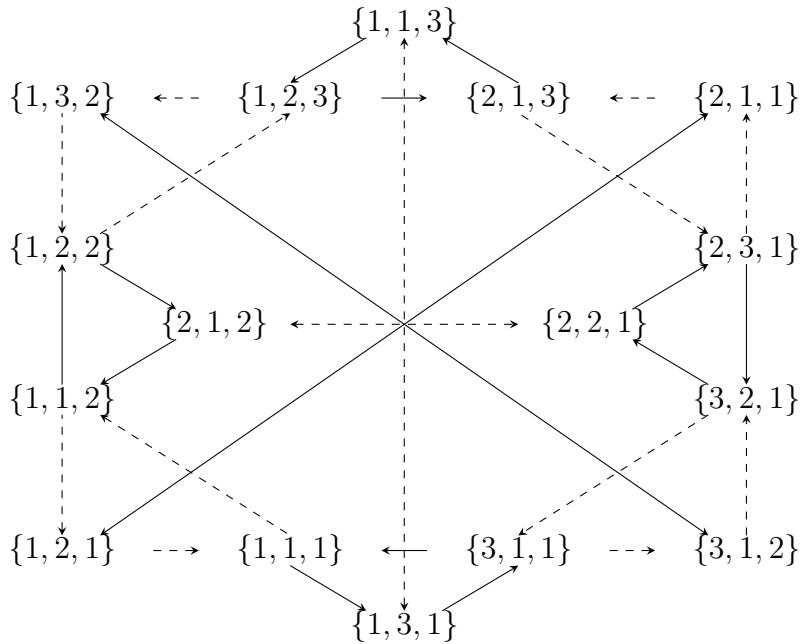
\begin{figure}[ht]
\begin{tikzpicture}
\draw (0,0) node {$\{1,2,1\}$};
\draw (3,0) node {$\{1,1,1\}$};
\draw (6,0) node {$\{3,1,1\}$};
\draw (9,0) node {$\{3,1,2\}$};
\draw (0,2) node {$\{1,1,2\}$};
\draw (0,4) node {$\{1,2,2\}$};
\draw (0,6) node {$\{1,3,2\}$};
\draw (3,6) node {$\{1,2,3\}$};
\draw (6,6) node {$\{2,1,3\}$};
\draw (9,6) node {$\{2,1,1\}$};
\draw (9,4) node {$\{2,3,1\}$};
\draw (9,2) node {$\{3,2,1\}$};
\draw (4.5,-1) node {$\{1,3,1\}$};
\draw (4.5,7) node {$\{1,1,3\}$};
\draw (2,3) node {$\{2,1,2\}$};
\draw (7,3) node {$\{2,2,1\}$};

\draw [->,>=stealth,dashed] (1.2,0)--(1.8,0);
\draw [<-,>=stealth] (4.2,0)--(4.8,0);
\draw [->,>=stealth,dashed] (7.2,0)--(7.8,0);
\draw [<-,>=stealth,dashed] (1.2,6)--(1.8,6);
\draw [->,>=stealth] (4.2,6)--(4.8,6);
\draw [<-,>=stealth,dashed] (7.2,6)--(7.8,6);
\draw [<-,>=stealth,dashed] (0,0.2)--(0,1.8);
\draw [->,>=stealth] (0,2.2)--(0,3.8);
\draw [<-,>=stealth,dashed] (0,4.2)--(0,5.8);
\draw [->,>=stealth,dashed] (9,0.2)--(9,1.8);
\draw [<-,>=stealth] (9,2.2)--(9,3.8);
\draw [->,>=stealth,dashed] (9,4.2)--(9,5.8);
\draw [<-,>=stealth,dashed] (0.5,1.8)--(3,0.2);
\draw [->,>=stealth,dashed] (0.5,4.2)--(3,5.8);
\draw [->,>=stealth,dashed] (8.5,1.8)--(6,0.2);
\draw [<-,>=stealth,dashed] (8.5,4.2)--(6,5.8);
\draw [<->,>=stealth] (0.5,0.2)--(8.5,5.8);
\draw [<->,>=stealth] (0.5,5.8)--(8.5,0.2);
\draw [<->,>=stealth,dashed] (4.5,-0.8)--(4.5,6.8);
\draw [<->,>=stealth,dashed] (3,3)--(6,3);
\draw [->,>=stealth] (3,-0.2)--(4,-0.8);
\draw [->,>=stealth] (5,-0.8)--(6,-0.2);
\draw [<-,>=stealth] (3,6.2)--(4,6.8);
\draw [<-,>=stealth] (5,6.8)--(6,6.2);
\draw [<-,>=stealth] (0.5,2.2)--(1.5,2.8);
\draw [<-,>=stealth] (1.5,3.2)--(0.5,3.8);
\draw [->,>=stealth] (8.5,2.2)--(7.5,2.8);
\draw [->,>=stealth] (7.5,3.2)--(8.5,3.8);
\end{tikzpicture}
\caption{Braid group action for $\PF_3$: $\alpha_1$ is solid, $\alpha_2$ is dashed}
\label{fig:pf for a3}
\end{figure}

Another combinatorial approach to distinguished bases  was proposed in \cite{buan}, where they were related to the maximal chains of non-crossing partitions. Stanley in \cite{stanley} constructed a bijection between the maximal chains of non-crossing partitions and parking functions. Both bijections appear to be quite nontrivial.

\begin{theorem}
The ``initial vector'' map coincides with the composition of bijections from \cite{buan} and \cite{stanley}.
\end{theorem}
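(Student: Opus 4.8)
The plan is to route all three objects through a single device: reduced reflection factorizations of the Coxeter element $c = (1\,2\,\cdots\,n+1)$ in $S_{n+1}$. Embed the $A_n$ root system in $\mathbb{R}^{n+1}$ so that the simple root $e_i$ becomes $\epsilon_i - \epsilon_{i+1}$; then the positive root $e_i + \cdots + e_j$ becomes $\epsilon_i - \epsilon_{j+1}$, its associated reflection is the transposition $(i,\,j+1)$, and its initial point $\In(e_i + \cdots + e_j) = i$ is exactly the smaller of the two transposed indices. First I would use Theorem 1.1 to see that an ordered basis $A = (a_1, \ldots, a_n)$ of positive roots is distinguished if and only if the corresponding transpositions $t_1, \ldots, t_n$ form a factorization $t_1 \cdots t_n = c$; since $c$ has absolute length $n$, such a factorization is automatically reduced, and upper-triangularity of the Seifert form is precisely the condition singling these out. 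These reduced factorizations are in standard bijection with maximal chains $\hat 0 = \pi_0 \lessdot \pi_1 \lessdot \cdots \lessdot \pi_n = \hat 1$ of $NC(n+1)$ via partial products.

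Next I would recast both bijections in this language. The content I would extract from \cite{buan} is that its bijection sends the distinguished basis $A$ to the maximal chain built from the partial products of the $t_k$, in such a way that the $k$-th cover $\pi_{k-1} \lessdot \pi_k$ is the merge effected by the transposition $t_k = (i_k,\,j_k+1)$ attached to $a_k = e_{i_k} + \cdots + e_{j_k}$. The content I would extract from \cite{stanley} is that its bijection is, up to a fixed normalization, the EL-labeling of $NC(n+1)$: each cover merging two blocks is labeled by one distinguished endpoint of the transposition producing the merge, and the label sequence is the associated parking function. Granting these two descriptions, the theorem reduces to a single assertion: the label Stanley attaches to the $k$-th cover equals the smaller transposed index $i_k = \In(a_k)$. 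If so, reading the labels of the chain in order returns $(\In(a_1), \ldots, \In(a_n)) = \In(A)$, which is exactly the claim.

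The main obstacle, and the combinatorial heart, is precisely this last identification of labels. One must unwind Stanley's labeling convention and the orientation (bottom-up versus top-down) of Buan's chain so that the $k$-th label is literally the left endpoint $i_k$ of the interval $[i_k, j_k]$, and not $j_k + 1$ or a shifted value, and so that the order of the chain matches the order of the basis; note that Stanley's label at the $k$-th cover depends on the whole partition $\pi_{k-1}$, not on $t_k$ in isolation, so the minimum of a merged block must be reconciled with the smaller endpoint of the reflection. Because both bijections are intricate, I expect the most reliable route is an induction on $n$: I would isolate a canonical cover of the chain — for instance the one forced, by the non-crossing condition, to involve the boundary element $n+1$ — delete the corresponding root from $A$, and check that this deletion is simultaneously compatible with $\In$, with the map of \cite{buan}, and with the map of \cite{stanley}, so that the inductive hypothesis applies to the remaining rank-$(n-1)$ data. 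The base case $n = 1$ is trivial. All the difficulty is in the book-keeping of the two conventions; once they are aligned the equality $\In = \mathrm{Stanley} \circ \mathrm{Buan}$ holds coordinate by coordinate.
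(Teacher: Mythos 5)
Your overall architecture is essentially the same reduction the paper makes: both bijections are funneled into maximal chains of non-crossing partitions (equivalently, via partial products, reduced reflection factorizations of the Coxeter element), and the theorem collapses to the single claim that the label of the $k$-th cover equals $\In(a_k)$ up to a fixed index shift. The problem is that you stop exactly at that claim. You name it ``the main obstacle'' and ``the combinatorial heart,'' sketch an inductive strategy, but never establish it --- and it is not, as your last sentence suggests, merely a matter of aligning conventions. What must be proved is a genuine combinatorial lemma: if the $k$-th cover merges blocks $B$ and $B'$ of $\pi_{k-1}$ with $\min(B)<\min(B')$, then the unique positive root effecting this merge while keeping the collection distinguished (i.e.\ compatible with conditions (1)--(3) of Proposition \ref{dbarcs} on the arcs) is forced to be $e_{\Lambda+1}+\ldots+e_{\max(B')}$, where $\Lambda=\max\{x\in B : x<\min(B')\}$ is Stanley's label. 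A priori the connecting arc could attach to $B$ at a different element, or attach on its left to $B'$ instead; ruling these out uses the nesting and ordering constraints on the arcs, and this exclusion is precisely where $\In(a_k)=\Lambda+1$ comes from. Without it, the ``coordinate by coordinate'' conclusion is unsupported.

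For comparison, the paper's proof does exactly this: it constructs the inverse map by induction, defining $a_{k+1}=e_{\Lambda(\pi_k,\pi_{k+1})+1}+\ldots+e_{\max(B')}$ and checking that this is the only admissible root, after which $\In(a_{k+1})=\Lambda(\pi_k,\pi_{k+1})+1$ is read off by inspection. If you prefer your factorization language, you still need the same lemma in the form: the transposition $t_k=(i_k,\,j_k+1)$ realizing the $k$-th cover satisfies $i_k-1=\max\{x\in B : x<\min(B')\}$ and $j_k=\max(B')$. Two secondary cautions: your assertion that upper-triangularity of the Seifert form is ``precisely'' the condition for the ordered product of transpositions to equal the Coxeter element is itself a nontrivial input (it is the content of the exceptional-sequence/factorization correspondence, not a formal consequence of Theorem 1.1); and Stanley's label is defined through the block $B$ rather than through $t_k$, so identifying it with an endpoint of $t_k$ is the same missing lemma in disguise, not a normalization issue.
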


In Section \ref{sec:quiver}, we discuss the relation of the above constructions to quiver representations.
A theorem of Gabriel \cite{Gab} identifies the positive roots of the $A_n$ root system with the indecomposable representations of the corresponding quiver; the Seifert form corresponds to the Euler form on the Grothendieck group $K_0(\Rep A_n)$. We show that distinguished bases (made of positive roots) enumerate collections $(E_1,\ldots,E_n)$ of indecomposable quiver representations such that
\begin{equation}
\label{exts}
\Ext^{k}(E_i,E_j)=0\quad \forall i>j,k\ge 0.
\end{equation}
Such are called  exceptional sequences, or  exceptional collections (e.g. \cite{araya,buan,buan2}). They were first investigated in works \cite{GoRu} and \cite{Ru}. The set of exceptional sequences carries as well an action of the braid group (see, e.g., \cite{buan2,CB,R}) which corresponds naturally to its action on the set of distinguished bases in our case.
In terms of quivers our main theorem says that there exists a filtration on $K_0(\Rep  A_n)$ such that exceptional sequences can be uniquely reconstructed from the filtration levels of their components.

\section*{Acknowledgements}

E. G. would like to thank S. Gusein-Zade, A. Kirillov Jr., P. Seidel and D. Orlov for useful discussions.
M. G. is grateful to B. Keller for the support and remarks on the preliminary version.
The research of E. G. was partially supported by the grants  RFBR-10-01-00678, NSh-8462.2010.1  and the Dynasty fellowship for young scientists. The research of M.~G. was supported by Fondation Sciences math\'{e}matiques de Paris and by R\'{e}seau de Recherche Doctoral en Math\'{e}matiques de l'\^{I}le de France.

\section{Distinguished bases}

Let $e_1,\ldots, e_n$ denote the positive simple roots of the $A_n$ root system.
We will denote positive roots as
$e_{ij}:=e_i+e_{i+1}+\ldots+e_j.$

\begin{definition}
The {\it Seifert form} on the $A_n$ root system is a non-symmetric bilinear form defined by the equations
$\left\langle e_i,e_i \right\rangle =1,\ \left\langle e_i,e_{i+1}\right\rangle =-1,$ and
$\left\langle e_i,e_j \right\rangle =0,\  j\neq i,i+1.$
\end{definition}

\begin{proposition}
For all $x,y,$ one has $\left\langle x,y\right\rangle+\left\langle y,x\right\rangle=(x,y),$
where the right hand side is the standard (Cartan) scalar product of $x$ and $y$.
\end{proposition}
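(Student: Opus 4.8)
The plan is to exploit bilinearity to reduce the asserted identity to a finite check on the simple roots. Both the symmetrized Seifert pairing $\langle x,y\rangle+\langle y,x\rangle$ and the Cartan product $(x,y)$ are $\mathbb{Z}$-bilinear in each argument, so if the two forms agree on every pair $(e_i,e_j)$ of simple roots, they automatically agree for all $x,y$ in the $A_n$ root lattice. Hence it suffices to verify the equality on the basis $\{e_1,\ldots,e_n\}$.

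Next I would record the values of the Cartan product on $A_n$, namely $(e_i,e_i)=2$, $(e_i,e_j)=-1$ when $|i-j|=1$, and $(e_i,e_j)=0$ when $|i-j|\geq 2$; these are exactly the entries of the $A_n$ Cartan matrix. The verification then splits into cases according to $j-i$. When $i=j$, the Seifert form gives $\langle e_i,e_i\rangle+\langle e_i,e_i\rangle=1+1=2=(e_i,e_i)$. When $j=i+1$, we have $\langle e_i,e_{i+1}\rangle=-1$ while $\langle e_{i+1},e_i\rangle=0$ (the second index $i$ is neither $i+1$ nor $i+2$), so the sum is $-1=(e_i,e_{i+1})$; the case $j=i-1$ is identical after exchanging the roles of the two arguments. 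Finally, when $|i-j|\geq 2$ both Seifert pairings vanish, as does the Cartan product.

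The computation is entirely routine, and I do not anticipate any serious obstacle. The only conceptual point worth stating explicitly is that the Seifert form is engineered precisely so that its off-diagonal asymmetry (the value $-1$ occurring in $\langle e_i,e_{i+1}\rangle$ against the $0$ in $\langle e_{i+1},e_i\rangle$) symmetrizes to the unique off-diagonal entry $-1$ of the Cartan matrix, while the diagonal values $1$ double to the diagonal $2$. Making this matching explicit across the four cases, together with the reduction to basis vectors via bilinearity, completes the proof.
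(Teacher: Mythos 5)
Your proof is correct: the paper states this proposition without proof, treating it as a routine verification, and your argument (reduce by bilinearity to pairs of simple roots, then check the cases $i=j$, $|i-j|=1$, $|i-j|\ge 2$ against the Cartan matrix entries) is exactly the standard computation that justifies it. The case analysis and the values you use for both forms are all accurate.
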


\begin{corollary}
If $v$ is a root of the $A_n$ root system, then
$\left\langle v,v\right\rangle=1.$
\end{corollary}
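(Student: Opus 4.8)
The plan is to reduce the statement to the symmetrization identity of the preceding Proposition together with the elementary fact that every root of $A_n$ has squared length $2$ in the Cartan product. First I would set $x=y=v$ in the Proposition, which gives $\langle v,v\rangle+\langle v,v\rangle=(v,v)$, that is, $2\langle v,v\rangle=(v,v)$. Hence it suffices to check that $(v,v)=2$ for every root $v$.

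For the latter, I would observe that the Cartan product appearing on the right-hand side is indeed the standard one: evaluating the Proposition on the simple roots recovers $(e_i,e_i)=2$, $(e_i,e_{i+1})=-1$, and $(e_i,e_j)=0$ for $|i-j|>1$, which is exactly the $A_n$ Cartan matrix. Since $A_n$ is simply laced, all roots lie in a single Weyl orbit and therefore share the squared length $2$ of the simple roots; thus $(v,v)=2$, and $\langle v,v\rangle=1$ follows immediately.

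If one prefers a self-contained argument that avoids any appeal to root-system theory, I would instead compute $\langle v,v\rangle$ directly for a positive root $v=e_{ij}=e_i+\dots+e_j$ by expanding bilinearly. The diagonal contributions $\langle e_k,e_k\rangle=1$ for $k=i,\dots,j$ give $j-i+1$, while the only other nonzero terms are $\langle e_k,e_{k+1}\rangle=-1$ for $k=i,\dots,j-1$, contributing $-(j-i)$; note in particular that the reversed pairings $\langle e_{k+1},e_k\rangle$ vanish by definition. The sum telescopes to $(j-i+1)-(j-i)=1$, and the case of a negative root is handled at once since $\langle -v,-v\rangle=\langle v,v\rangle$ by bilinearity. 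The only point requiring attention is the asymmetry of the Seifert form, so that one does not accidentally double-count the off-diagonal terms; but this asymmetry is precisely what makes the telescoping collapse to $1$.
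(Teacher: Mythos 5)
Your first argument is precisely the paper's intended (unwritten) proof: polarize the Proposition at $x=y=v$ to get $2\left\langle v,v\right\rangle=(v,v)$ and use that every root of the simply laced system $A_n$ has $(v,v)=2$. The direct telescoping computation you add as an alternative is also correct (and makes the corollary independent of the Proposition), but it is not needed.
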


\begin{definition}
A basis $\{a_1,\ldots,a_n\}$ of the $A_n$ root system is  called {\it distinguished}, if $a_j$ are roots in the root system and $\left\langle a_j,a_i\right\rangle=0,$ for $j>i$.
This means that the matrix of the Seifert form is upper-triangular in the distinguished basis.
\end{definition}

Remark that the change of sign for a root transforms a distinguished basis to a distinguished one.
Therefore, from now on, we will consider the distinguished bases made of positive roots only.
The following lemma describes the values of the Seifert form on pairs of positive roots and can be easily proved by case by case analysis.

\begin{lemma}
\label{L1}
The value of the Seifert form on a pair of positive roots can be computed in a following way:
$$\left\langle e_{ij},e_{kl}\right\rangle=\begin{cases}
1,\quad \mbox{\rm if}\quad k\le i\le l\le j\\
-1,\quad  \mbox{\rm if}\quad i\le k-1\le j<l\\
0,\quad \mbox{\rm otherwise} \\
\end{cases}
$$
\end{lemma}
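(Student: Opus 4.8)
The plan is to avoid the case analysis entirely at first and instead reduce the computation to bilinearity followed by a telescoping sum, only splitting into cases at the very end. First I would expand both arguments in terms of simple roots. Since $e_{ij}=e_i+\cdots+e_j$ and $e_{kl}=e_k+\cdots+e_l$, bilinearity of the Seifert form gives
$$\langle e_{ij},e_{kl}\rangle=\sum_{a=i}^{j}\sum_{b=k}^{l}\langle e_a,e_b\rangle.$$
By the defining values on simple roots, $\langle e_a,e_b\rangle$ equals $1$ when $b=a$, equals $-1$ when $b=a+1$, and vanishes otherwise; writing $[P]$ for $1$ if $P$ holds and $0$ otherwise, this reads $\langle e_a,e_b\rangle=[a=b]-[a=b-1]$.

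Next I would carry out the inner summation over $b$, which telescopes. For fixed $a$,
$$\sum_{b=k}^{l}\bigl([a=b]-[a=b-1]\bigr)=[a=l]-[a=k-1],$$
since the only surviving terms come from the top endpoint $b=l$ and the bottom endpoint $b=k$. Summing this over $a\in\{i,\ldots,j\}$ then yields the compact formula
$$\langle e_{ij},e_{kl}\rangle=[\,i\le l\le j\,]-[\,i\le k-1\le j\,],$$
so that the value is $+1$ or $-1$ according to whether $l$ or $k-1$ lies in the interval $[i,j]$, with the contributions cancelling when both or neither do.

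Finally I would translate this two-term indicator into the three cases of the statement, invoking the standing constraints $i\le j$ and $k\le l$. The value is $+1$ exactly when $l\in[i,j]$ while $k-1\notin[i,j]$; here $k\le l\le j$ makes $k-1>j$ impossible, so $k-1\notin[i,j]$ forces $k-1<i$, i.e.\ $k\le i$, recovering $k\le i\le l\le j$. The value is $-1$ exactly when $k-1\in[i,j]$ while $l\notin[i,j]$; here $l\ge k\ge i+1>i$ rules out $l<i$, so $l>j$, recovering $i\le k-1\le j<l$. Every remaining configuration gives $0$.

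I expect no serious obstacle: the telescoping is the one idea that streamlines the argument and replaces the brute-force case analysis the statement alludes to. The only step demanding genuine care is this last translation, where one must use $i\le j$ and $k\le l$ to discard the spurious branches (such as $k-1>j$ or $l<i$) so that the indicator formula collapses to precisely the two nonzero cases claimed.
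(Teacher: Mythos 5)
Your proof is correct, and it takes a genuinely different (and cleaner) route than the paper, which offers no written proof at all --- it merely asserts that the lemma ``can be easily proved by case by case analysis,'' i.e.\ by enumerating the relative positions of $i,j,k,l$ and evaluating the form in each configuration. Your argument replaces that enumeration with bilinearity plus a telescoping inner sum: the identity $\langle e_a,e_b\rangle=[a=b]-[a=b-1]$ makes $\sum_{b=k}^{l}\langle e_a,e_b\rangle$ collapse to $[a=l]-[a=k-1]$, and summing over $a$ gives the uniform closed form $\langle e_{ij},e_{kl}\rangle=[\,i\le l\le j\,]-[\,i\le k-1\le j\,]$. I checked the telescoping (the difference of the two interval indicators $[k\le a\le l]-[k-1\le a\le l-1]$ is indeed supported only at $a=l$ and $a=k-1$) and the final translation into the three stated cases, where your use of $k\le l$ to exclude $k-1>j$ in the $+1$ branch and of $l\ge k>i$ to exclude $l<i$ in the $-1$ branch is exactly the care that step requires; the two nonzero cases are mutually exclusive since one forces $l\le j$ and the other $l>j$. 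What your approach buys is a single indicator identity that both proves the lemma and is reusable (e.g.\ it makes the antisymmetry statements in Corollary~\ref{C2} immediate), at the cost of nothing; the brute-force check the paper alludes to is more elementary in spirit but longer and more error-prone.
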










\begin{definition}
The support of a root $a = e_{ij}$ is the set $\supp(a) = \left\{i,i+1,\ldots,j\right\}.$
\end{definition}

\begin{corollary}
\label{C2}
In a distinguished basis, for every two roots, either the support of one is contained in the support of the other or these supports do not intersect.
\end{corollary}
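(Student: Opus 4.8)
The plan is to prove Corollary \ref{C2} by leveraging Lemma \ref{L1}, which computes the Seifert form on pairs of positive roots, together with the defining property of a distinguished basis that the Seifert form is upper-triangular. The key observation is that two positive roots $e_{ij}$ and $e_{kl}$ have supports $\{i,\dots,j\}$ and $\{k,\dots,l\}$ that fail to satisfy the conclusion precisely when they \emph{partially overlap}: that is, when the two intervals intersect but neither is contained in the other. So I would first make this combinatorial dichotomy explicit. Two intervals $[i,j]$ and $[k,l]$ on the integer line are in one of exactly three relations: nested (one contains the other), disjoint, or partially overlapping. The goal is to rule out the last case for any pair of roots in a distinguished basis.

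The main step is therefore to show that if $[i,j]$ and $[k,l]$ partially overlap, then \emph{both} $\langle e_{ij},e_{kl}\rangle$ and $\langle e_{kl},e_{ij}\rangle$ are nonzero, which would contradict upper-triangularity (since in a distinguished basis, for any pair of distinct basis vectors, at least one of the two orderings gives value $0$). Concretely, partial overlap means, up to swapping the two roots, that $i < k \le j < l$ (the intervals overlap on the segment $[k,j]$ with $i$ strictly left of $k$ and $j$ strictly left of $l$). I would plug this configuration into Lemma \ref{L1}: the condition $i \le k-1 \le j < l$ is exactly satisfied, giving $\langle e_{ij},e_{kl}\rangle = -1$; and checking the other ordering, the condition $k \le i \le l \le j$ fails while $k \le k-1$ is false, so one examines whether $\langle e_{kl},e_{ij}\rangle$ is also forced nonzero. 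The essential point is that partial overlap produces a nonzero form value in at least the first ordering, and a careful reading shows it cannot be reconciled with the vanishing required by the distinguished condition in \emph{both} orderings.

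More precisely, in a distinguished basis with vectors ordered $a_1,\dots,a_n$, for any two distinct indices we have $\langle a_j,a_i\rangle = 0$ whenever $j>i$; thus for the two roots under consideration, one of $\langle e_{ij},e_{kl}\rangle$ or $\langle e_{kl},e_{ij}\rangle$ must vanish, depending on their relative position in the ordering. So it suffices to show that partial overlap forces \emph{both} of these values to be nonzero, contradicting the existence of the vanishing one. I would verify this by applying Lemma \ref{L1} symmetrically: under $i<k\le j<l$, the first value is $-1$ by the middle case, and I would check that the reverse pairing $\langle e_{kl},e_{ij}\rangle$ evaluates to $1$ via the first case of the lemma (with the roles of the two intervals exchanged, the inequality $i \le k \le j \le \dots$ needs to be matched against $k\le i$ — here the exact inequalities must be tracked carefully). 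The upshot is that partial overlap yields a $2\times 2$ submatrix of the Seifert form with no zero entry in the relevant off-diagonal positions, which is incompatible with upper-triangularity.

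The main obstacle I anticipate is bookkeeping: Lemma \ref{L1} is stated with specific asymmetric inequalities, so I must be scrupulous about which interval plays the role of $e_{ij}$ versus $e_{kl}$ in each of the two orderings, and confirm that the partial-overlap inequalities $i<k\le j<l$ feed correctly into both evaluations. The risk is an off-by-one error in the boundary conditions (for instance, distinguishing $k \le j$ from $k-1 \le j$), so I would lay out the single canonical partial-overlap case, compute both form values against the lemma's three cases explicitly, and confirm that neither vanishes — thereby establishing that no such pair can occur in a distinguished basis and leaving only the nested and disjoint alternatives, which is exactly the claim.
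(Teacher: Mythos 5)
Your proposal is correct and follows essentially the same route as the paper's proof: reduce to the partial-overlap configuration, apply Lemma \ref{L1} in both orderings to get the values $-1$ and $1$, and contradict the vanishing required by the distinguished (upper-triangular) condition. Your bookkeeping is in fact slightly more careful than the paper's, since you allow the boundary case $k=j$ (overlap in a single index), which the printed proof writes with a strict inequality $i<k<j<l$ but which is handled identically by the lemma.
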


\begin{proof}
Let $a=e_i+\ldots+e_j,\quad b=e_k+\ldots+e_l,\quad i<k<j<l.$ By Lemma \ref{L1},  $\left\langle a,b\right\rangle=-1$ and  $ \left\langle b,a\right\rangle=1$. On the other hand, in the distinguished basis the value of the Seifert form on them should vanish in one of the orders.
\end{proof}

Let us associate with each positive root $e_{ij}$ an arc  above the real axis and with ends $(i - 1, j)$ on it.  

\begin{proposition} \label{dbarcs}
Distinguished bases for root system $A_n$ correspond bijectively to ordered collections of $n$ pairwise non-intersecting arcs with the following properties:
\begin{itemize}
\item[1)] If two arcs have same left ends, then the inside arc has bigger label,

\begin{tikzpicture}
\draw (3,0) arc (0:180:1cm);
\draw (2,0) arc (0:180:0.5cm);
\draw[-,>=stealth] (0,0)--(10,0);
\draw (2.7,1) node {$i$};
\draw (2,0.5) node {$j$};
\draw (5,0.5) node {$i<j$};
\end{tikzpicture}

\item[2)] If two arcs have same right ends, then the inside arc has smaller label,

\begin{tikzpicture}
\draw (3,0) arc (0:180:1cm);
\draw (3,0) arc (0:180:0.5cm);
\draw[-,>=stealth] (0,0)--(10,0);
\draw (2.7,1) node {$i$};
\draw (2,0.5) node {$j$};
\draw (5,0.5) node {$i>j$};
\end{tikzpicture}

\item[3)] If right end of arc $i$ coincides with left end of arc $j$, then $i<j,$

\begin{tikzpicture}
\draw (3,0) arc (0:180:1cm);
\draw (4,0) arc (0:180:0.5cm);
\draw[-,>=stealth] (0,0)--(10,0);
\draw (2.7,1) node {$i$};
\draw (4,0.5) node {$j$};
\draw (5,0.5) node {$i<j$};
\end{tikzpicture}

\item[4)] The arcs form a graph without cycles.

\end{itemize}
\end{proposition}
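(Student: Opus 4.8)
The plan is to realize each positive root as an edge of a graph on the vertex set $\{0,1,\ldots,n\}$ and then read off all four conditions from Lemma \ref{L1} and Corollary \ref{C2}. In the standard realization of the $A_n$ lattice inside $\{x\in\mathbb{Z}^{n+1}:\sum_a x_a=0\}$ with $e_i=\epsilon_{i-1}-\epsilon_i$, the root $e_{ij}$ telescopes to $\epsilon_{i-1}-\epsilon_j$, which is exactly the edge $\{i-1,j\}$ joining the two endpoints of the arc attached to $e_{ij}$. Thus an ordered collection of $n$ positive roots is literally an ordered (i.e. $1,\ldots,n$ labelled) collection of $n$ such arcs, and the task is to match the distinguished-basis axioms with the geometric conditions 1)--4).

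First I would dispose of non-intersection and acyclicity. Two arcs cross in the open upper half-plane precisely when their endpoint intervals interleave, that is, when the supports of the two roots overlap with neither containing the other; by Corollary \ref{C2} this never occurs in a distinguished basis, and conversely non-crossing arcs have nested-or-disjoint supports. For condition 4), $n$ edges on the $n+1$ vertices $\{0,\ldots,n\}$ are linearly independent over $\mathbb{Q}$ if and only if they contain no cycle, and a forest with $n$ edges on $n+1$ vertices is automatically a spanning tree; since the edges of a spanning tree form a $\mathbb{Z}$-basis of the $A_n$ root lattice, ``the chosen roots form a basis'' is equivalent to ``the arcs are acyclic''. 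This settles the pairwise-non-intersection hypothesis and condition 4) at once.

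The heart of the argument is the case analysis translating upper-triangularity of the Seifert form into conditions 1)--3). Given two non-crossing arcs $a=e_{ij}$ and $b=e_{kl}$, their relative position is one of five types: disjoint with a gap; disjoint but touching at a common endpoint; nested with a common left end; nested with a common right end; nested with both ends strictly interior. Using Lemma \ref{L1} I would compute $\langle a,b\rangle$ and $\langle b,a\rangle$ in each case and observe that at most one of the two is nonzero. The gap and strictly-interior cases give $(0,0)$, imposing no constraint, which matches the absence of any axiom governing them; the three remaining cases each produce a single $\pm1$. The distinguished condition $\langle a_q,a_p\rangle=0$ for $q>p$ then forces, respectively, the smaller label on the left (touching) arc --- condition 3); the larger label on the inner arc with a common left end --- condition 1); and the smaller label on the inner arc with a common right end --- condition 2). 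Reading this equivalence in both directions over all pairs shows that an ordered family of arcs obeys 1)--3) exactly when the corresponding ordered family of roots has upper-triangular Seifert matrix.

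Finally I would assemble the bijection. A distinguished basis maps to arcs that are non-intersecting (Corollary \ref{C2}), acyclic (it is a basis), and correctly labelled (upper-triangularity, via the case analysis); conversely any ordered collection of arcs satisfying 1)--4) yields positive roots that form a basis (acyclicity) with upper-triangular Seifert matrix (the labelling conditions), hence a distinguished basis. The two assignments are manifestly inverse, both being the dictionary $e_{ij}\leftrightarrow\{i-1,j\}$ together with the ordering. The one genuinely delicate input is in condition 4): that a spanning tree gives a $\mathbb{Z}$-basis rather than merely a finite-index sublattice, i.e. the unimodularity of the oriented incidence matrix of a tree; this is where I would argue most carefully. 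The Seifert-form case analysis, though it is the bulk of the writing, is entirely mechanical once Lemma \ref{L1} is available.
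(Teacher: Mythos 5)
Your proposal is correct and follows essentially the same route as the paper: condition (4) is matched with linear independence (hence the basis property), and conditions (1)--(3) are matched with upper-triangularity of the Seifert form via Lemma \ref{L1} and Corollary \ref{C2}. The paper's own proof is a two-line summary of exactly this argument; you have simply supplied the case analysis and the spanning-tree/unimodularity details that it leaves implicit.
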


\begin{proof}
The property (4) ensures that we have $n$ linearly independent positive roots, i.e. a basis $A = \left\{a_1,\ldots,a_n\right\}$ of positive roots. The properties (1)-(3),  by Lemma \ref{L1} 
and Corollary \ref{C2}, imply that this basis is distinguished.
\end{proof}

\begin{remark}
Similar pictures appeared in \cite{araya}, where, however, the order of arcs was not considered.
\end{remark}

\begin{remark}
It is easy to see that the property (4) follows from (1)-(3): since the arcs do not intersect each other, 
a minimal cycle may consist only of one big arc containing small arcs (see Figure \ref{fig:cycle}). Suppose that the small arcs have labels $i_1,\ldots,i_k$, and the big one has label $j$. By (3), we have $i_1<\ldots<i_k$; by (2), $j>i_{k},$ and by (1) $j<i_{1}$. Contradiction.
\end{remark}

\begin{figure}[ht]
\begin{tikzpicture}
\draw (8,0) arc (0:180:3.5cm);
\draw (2,0) arc (0:180:0.5cm);
\draw (3,0) arc (0:180:0.5cm);
\draw (5,0) arc (0:180:1cm);
\draw (6,0) arc (0:180:0.5cm);
\draw (8,0) arc (0:180:1cm);
\draw[-,>=stealth] (0,0)--(10,0);
\end{tikzpicture}
\caption{}
\label{fig:cycle}
\end{figure}

\begin{definition}
We introduce a partial order on roots: $a\succeq b$ if $\supp(a) \supset \supp(b).$ 
Consider a root $a_i$ in a distinguished basis $A = (a_1,\ldots,a_n).$ Define 
$$
\Span(a_i, A):=\bigcup \limits_{a_i\succ a_j} \supp(a_j),\ 
\Gap(a_i, A) := \supp(a_i) \backslash \Span(a_i, A).
$$
\end{definition}


\begin{lemma} \label{hole}
The set $\Gap(a_i,A)$ consists of a single element.
\end{lemma}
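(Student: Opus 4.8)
The plan is to pass to the arc/graph picture of Proposition \ref{dbarcs} and read off $\Gap(a_i,A)$ from a spanning-tree count. I represent each positive root $e_{ij}$ by the edge joining the integer vertices $i-1$ and $j$ on the real axis; the distinguished basis $A$ then becomes a set of $n$ non-crossing arcs, i.e. $n$ edges whose endpoints lie in $\{0,1,\dots,n\}$. By property (4) this graph is acyclic, and a forest on a vertex set of size $V$ with $c$ components has $V-c$ edges; here there are $n$ edges and at most $n+1$ available vertices, so $n=V-c\le(n+1)-c$ forces $c=1$ and $V=n+1$. Thus the whole diagram is a spanning tree $T$ of $\{0,1,\dots,n\}$, and in particular every vertex has degree at least $1$. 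I will also use throughout that, by Corollary \ref{C2}, the supports form a laminar family: any two are nested or disjoint.

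Fix $a_i=e_{pq}$, so $\supp(a_i)=\{p,\dots,q\}$ and $a_i$ is the edge $(p-1,q)$. Let $T_{a_i}$ denote the subgraph of $T$ consisting of all edges with both endpoints in $S:=\{p-1,p,\dots,q\}$; by laminarity these are precisely $a_i$ together with its strict descendants $a_j\prec a_i$. I claim $T_{a_i}$ is a spanning tree of $S$. Acyclicity is inherited from $T$, so it remains to show $T_{a_i}$ touches every vertex of $S$ and is connected, and both rest on the following crossing observation, which is the heart of the argument: if an edge $b=(t-1,u)$ is incident to an interior vertex $v\in\{p,\dots,q-1\}$ and had an endpoint outside $S$, then its ends would interleave those of $a_i$ as $t-1<p-1<v<q$ or $p-1<v<q<u$, so $b$ would cross $a_i$, contradicting that the arcs are non-crossing. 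Hence every edge meeting an interior vertex already lies in $T_{a_i}$; since each interior vertex has degree at least $1$ in $T$ it is covered, while $p-1$ and $q$ are covered by $a_i$ itself. Connectivity follows from the same observation: starting at any $v\in S$, the unique $T$-path toward $p-1$ cannot leave $S$ while it sits at an interior vertex, so it stays in $S$ until it first reaches a boundary vertex, using only edges of $T_{a_i}$; as $a_i$ joins $p-1$ to $q$, this places $v$ in the component of $a_i$.

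With the spanning-tree property in hand the count is immediate. A tree on $S$ has $|S|-1=q-p+1=|\supp(a_i)|$ edges, so $|E(T_{a_i})|=|\supp(a_i)|$. On the other hand, every strict descendant lies below a unique maximal proper descendant (``child''), and by Corollary \ref{C2} the children have pairwise disjoint supports; this partitions the edge set of $T_{a_i}$ into $a_i$ together with the subtrees $T_c$ of the children, giving $|E(T_{a_i})|=1+\sum_{c}|E(T_c)|=1+\sum_{c}|\supp(c)|=1+|\Span(a_i,A)|$, where the middle equality applies the spanning-tree count to each child and the last uses disjointness of their supports. Comparing the two expressions yields $|\supp(a_i)|-|\Span(a_i,A)|=1$, and since $\Span(a_i,A)\subseteq\supp(a_i)$ the set $\Gap(a_i,A)=\supp(a_i)\setminus\Span(a_i,A)$ has exactly one element. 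The only genuinely delicate point is extracting the spanning-tree property of $T_{a_i}$ from planarity, namely the coverage and connectivity steps; once these are secured the conclusion is pure Euler-characteristic bookkeeping.
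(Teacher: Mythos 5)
Your argument is correct, but it takes a genuinely different route from the paper's. The paper disposes of both failure modes by a two-line linear-independence contradiction: if $\Gap(a_i,A)=\emptyset$, then the $\succ$-maximal roots below $a_i$ have disjoint interval supports covering $\supp(a_i)$, so their sum equals $a_i$; if $\Gap(a_i,A)$ contained two simple roots $e_j,e_k$, then (by Corollary \ref{C2}) every root of $A$ would contain both or neither, confining $A$ to a hyperplane. Either way $A$ fails to be a basis. You instead prove the exact equality $|\supp(a_i)|=1+|\Span(a_i,A)|$ by an Euler-characteristic count: the $n$ arcs form a spanning tree of $\{0,\dots,n\}$, the non-crossing property forces the edges with both endpoints in $\{\In(a_i)-1,\dots,\Ter(a_i)\}$ (namely $a_i$ and its strict descendants) to be a spanning tree of that interval, and comparing the $|S|-1$ edge count with the recursive decomposition into the children's subtrees gives the gap of size exactly one. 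Your coverage and connectivity verifications for $T_{a_i}$ are sound, and the uniqueness of the child above each non-maximal descendant (needed for the disjoint decomposition of the edge set) does follow from laminarity as you use it. What the paper's argument buys is brevity; what yours buys is a stronger structural statement (each sub-diagram is itself a spanning tree of its interval, which essentially re-proves Lemma \ref{chain} along the way) and a single uniform count in place of a two-case contradiction. Both ultimately rest on the same inputs, Proposition \ref{dbarcs}(4) and Corollary \ref{C2}.
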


\begin{proof}
If $\Gap(a_i, A)=\emptyset$, then the sum of $\succ$-maximal elements $a_j\prec a_i$ equals to $a_i$.
If $\Gap(a_i, A)$ contains two simple roots $e_j, e_k$, then the support of every root from $A$ contains either 
both $e_j$ and $e_k$ or none of them. 
In both cases, $A$ is linearly dependent. Contradiction. 
\end{proof}



\begin{lemma} \label{chain}
Suppose that $a_i=e_{\alpha}+\ldots+e_{\beta},$ and 
$e_{s}\in \Span(a_i, A)$. Then there is a sequence $k_1,k_2,\ldots, k_r$ such that either
\begin{equation}
\label{left}
k_j>i\quad \forall j,\qquad [\alpha,s]\subset \bigcup_{j}\supp(a_{k_j})
\end{equation}
or 
\begin{equation}
\label{right}
k_j<i\quad \forall j,\qquad [s,\beta]\subset \bigcup_{j}\supp(a_{k_j}).
\end{equation}
\end{lemma}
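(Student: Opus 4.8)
The plan is to work entirely inside the arc picture of Proposition~\ref{dbarcs} and to read off the required label inequalities from its properties (1)--(3). Write $g=\Gap(a_i,A)$; by Lemma~\ref{hole} this is a single element of $[\alpha,\beta]$. Call the \emph{children} of $a_i$ the $\succ$-maximal roots $a_j\prec a_i$. First I would observe that the children tile $\Span(a_i,A)$: by Corollary~\ref{C2} any two roots of $A$ have nested or disjoint supports, so two distinct children cannot be nested (else one is not maximal) and hence have disjoint supports; moreover every root strictly below $a_i$ lies below some child, so $\bigcup_{\text{children}}\supp=\Span(a_i,A)=[\alpha,\beta]\setminus\{g\}$.

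Since the children have pairwise disjoint interval supports whose union is $[\alpha,g-1]\sqcup[g+1,\beta]$, they split into a \emph{left group} tiling $[\alpha,g-1]$ and a \emph{right group} tiling $[g+1,\beta]$, each group consisting of consecutive intervals with no gaps. The key step is to propagate labels using (1)--(3). For two consecutive children in one group, covering $[\,\cdot\,,r]$ and $[r+1,\,\cdot\,]$, the right end of the first and the left end of the second both sit at position $r$, so property (3) forces the labels to strictly increase from left to right within each group. At the boundaries: the leftmost left-child shares its left end (position $\alpha-1$) with $a_i$, so property (1) gives it a label $>i$; the rightmost right-child shares its right end (position $\beta$) with $a_i$, so property (2) gives it a label $<i$. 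Combining the boundary facts with monotonicity, I conclude that \emph{every} left-child has label $>i$ and \emph{every} right-child has label $<i$.

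To finish, note that $e_s\in\Span(a_i,A)$ forces $s\neq g$, so $s$ lies in one of the two blocks. If $s\in[\alpha,g-1]$, I take the $k_j$ to be the labels of the left-children: their supports union to $[\alpha,g-1]\supseteq[\alpha,s]$ and all satisfy $k_j>i$, which is exactly \eqref{left}. If $s\in[g+1,\beta]$, I take the $k_j$ to be the labels of the right-children: their supports union to $[g+1,\beta]\supseteq[s,\beta]$ with all $k_j<i$, which is \eqref{right}.

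I expect the only genuinely delicate point to be justifying that the children tile each block contiguously and that the comparison with $i$ is governed purely by which side of the gap a child lies on; once Lemma~\ref{hole} pins down the single gap and the tiling is established, properties (1)--(3) apply almost mechanically. I would also check the degenerate cases $g=\alpha$ and $g=\beta$, where one block is empty so only one of \eqref{left}, \eqref{right} is available, to confirm that the relevant alternative still holds.
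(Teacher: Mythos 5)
Your proof is correct and follows essentially the same route as the paper's: both arguments rest on Lemma~\ref{hole} pinning down the single gap and on properties (1)--(3) of Proposition~\ref{dbarcs} to force the label of each arc in a chain of adjacent sub-arcs to lie on the correct side of $i$. The only difference is organizational --- the paper runs a greedy/extremal argument on the maximal covered $c$, whereas you first exhibit the tiling of $\Span(a_i,A)$ by $\succ$-maximal sub-roots and then propagate the label inequalities along it; your version actually makes explicit two points the paper leaves implicit (the existence of the tiling via Corollary~\ref{C2}, and the use of property (1) to anchor the chain at $\alpha$).
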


\begin{proof}
Let $c$ be the maximal number such that there exists a sequence $\{k_j\}$
satisfying (\ref{left}). By Lemma \ref{dbarcs}, $c+1\notin \Span(a_i, A)$ -- otherwise a root starting with $c+1$ should go after the root ending with $c$ in the basis $A$, and $c$ is not maximal. Therefore, $c+1=\Gap(a_i, A),$ and for all $s<\Gap(a_i, A)$ there exists a sequence $\{k_j\}$ satisfying (\ref{left}).
Analogously, for all $s>\Gap(a_i, A)$ there exists a sequence $\{k_j\}$ satisfying (\ref{right}).
\end{proof}

A theorem of Lyashko \cite{lya} and Looijenga \cite{loo} states that the root system of type $A_n$ has $(n+1)^{n-1}$ distinguished bases of positive roots.
Let us give a recursive construction of these bases.

\begin{definition}
The {\it right orthogonal complement} to a subspace $V$ is defined as
$$V^{\bot}_{r}=\{y\ |\ \forall x\in V\  \left\langle x,y\right\rangle=0\}.$$
\end{definition}

\begin{lemma}
\label{orth}
The right orthogonal complement to a root $e_{k}+\ldots+e_{m}$ 
is isomorphic to $A_{m-k}\oplus A_{n-m+k-1}.$
\end{lemma}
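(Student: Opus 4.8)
The plan is to pin down $v^{\bot}_r$, where $v=e_{km}$, by determining exactly which roots it contains and then recognizing those roots as two mutually orthogonal copies of smaller type-$A$ systems. I would work in the standard coordinate model $\mathbb{R}^{n+1}=\langle\epsilon_1,\ldots,\epsilon_{n+1}\rangle$, writing $e_i=\epsilon_i-\epsilon_{i+1}$, so that $e_{ij}=\epsilon_i-\epsilon_{j+1}$ and $v=\epsilon_k-\epsilon_{m+1}$; a positive root is then $\epsilon_c-\epsilon_d$ with $1\le c<d\le n+1$. The first step is to translate Lemma \ref{L1} into these coordinates, obtaining for positive roots
\[
\left\langle \epsilon_a-\epsilon_b,\ \epsilon_c-\epsilon_d\right\rangle=
\begin{cases}
1 & c\le a<d\le b,\\
-1 & a<c\le b<d,\\
0 & \text{otherwise.}
\end{cases}
\]
The crucial observation, which drives the whole argument, is that this value depends only on the \emph{relative order} of the four indices, not on their sizes or on the gaps between them. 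Consequently any order-preserving relabeling of a subset of indices carrying a subconfiguration of roots preserves the Seifert form.

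Next I would compute the complement. Putting $v$ in the left slot ($a=k$, $b=m+1$) and solving $\left\langle v,\epsilon_c-\epsilon_d\right\rangle=0$ over positive roots, I expect to find that $\epsilon_c-\epsilon_d\in v^{\bot}_r$ precisely when both endpoints lie in $T:=\{k+1,\ldots,m+1\}$ or both lie in $S:=\{1,\ldots,k\}\cup\{m+2,\ldots,n+1\}$, while a root with one endpoint in each set always pairs with $v$ to $\pm1$. The roots supported on $T$ form a copy of $A_{m-k}$ (since $|T|=m-k+1$), and those supported on $S$ form a copy of $A_{n-m+k-1}$ (since $|S|=n-m+k$); by the relative-order observation the order-preserving relabelings of $T$ and of $S$ carry these to the standard Seifert form of the respective $A$-systems.

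It then remains to assemble the direct sum: I must verify that the $T$-block and the $S$-block are Seifert-orthogonal \emph{in both orders}, and that the sublattice they generate is all of $v^{\bot}_r$. The delicate point, and the main obstacle, is the block-orthogonality, precisely because the Seifert form is not symmetric and can be nonzero even on roots with disjoint index pairs — the interleaving pattern $a<c<b<d$ yields $-1$. I would rule this out by noting that every $S$-root has both endpoints outside the interval $[k+1,m+1]\supseteq T$, so an $S$-interval either lies entirely to one side of the $T$-range or strictly contains it; in the first case the two intervals are disjoint and in the second they are nested, and in both configurations the displayed formula gives $0$ in each order, so the interleaving configuration never arises.

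Finally, for the spanning/saturation claim I would observe that on the $A_n$ root lattice $Q=\{\sum c_i\epsilon_i:\sum c_i=0\}$ the two constraints $\sum_{i\in T}c_i=0$ and $\sum_{i\in S}c_i=0$ coincide, so the lattice generated by the $T$- and $S$-roots is the kernel of a single surjective functional on $Q$, hence saturated of rank $n-1$. Since it is contained in $v^{\bot}_r$, which also has rank $n-1$, the two coincide, and we conclude $v^{\bot}_r\cong A_{m-k}\oplus A_{n-m+k-1}$ as lattices with the Seifert form. I expect the orthogonality bookkeeping of the non-symmetric form to be the only genuinely careful part; everything else is a direct consequence of the relative-order invariance.
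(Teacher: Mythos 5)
Your proof is correct, and it takes a genuinely more systematic route than the paper's one\--line argument, which simply displays a candidate basis and asserts via Lemma \ref{L1} that it works. Your version --- translating to the endpoint model $e_{ij}=\epsilon_i-\epsilon_{j+1}$, characterizing the right complement of $v=\epsilon_k-\epsilon_{m+1}$ as exactly the roots with both endpoints in $T=\{k+1,\ldots,m+1\}$ or both in $S=\{1,\ldots,k\}\cup\{m+2,\ldots,n+1\}$, checking block\--orthogonality in \emph{both} orders, and closing with the saturation argument --- settles the two points the paper glosses over: that the non\--symmetric form really vanishes between the two blocks (your nested\--or\--disjoint, no shared endpoints observation is exactly the right reason the interleaving case never fires), and that the listed roots span the whole complement rather than a finite\--index sublattice. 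The extra care in fact pays off: the basis displayed in the paper, $e_1,\ldots,e_{k-2},\ e_{k-1}+\ldots+e_m,\ e_{m+1},\ldots,e_n$ together with $e_{k+1},\ldots,e_m$, does not lie in $V^{\bot}_r$ as the paper defines it, since $\left\langle e_k+\ldots+e_m,\ e_{m+1}\right\rangle=-1$ and $\left\langle e_k+\ldots+e_m,\ e_{k-1}+\ldots+e_m\right\rangle=1$; the paper's first summand is the outer block of the \emph{left} complement while its second summand belongs to the right one. Your $T$/$S$ description yields the correct explicit basis $e_1,\ldots,e_{k-1},\ e_k+\ldots+e_{m+1},\ e_{m+2},\ldots,e_n$ and $e_{k+1},\ldots,e_m$, and of course the isomorphism type $A_{n-m+k-1}\oplus A_{m-k}$ --- which is all the lemma asserts and all that is used in the subsequent counting --- comes out the same under either convention.
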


\begin{proof}
Using Lemma \ref{L1}, one can present an explicit basis in this orthogonal complement:
$$\left\langle e_{k}+\ldots+e_{m}\right\rangle^{\bot}_{r}=\left\langle e_1,\ldots,e_{k-2},e_{k-1}+\ldots+e_{m},e_{m+1},\ldots e_{n}\right\rangle\oplus\left\langle e_{k+1},\ldots,e_{m}\right\rangle.$$
It is clear that the first subspace has type $A_{n-m+k-1}$, the second one has type $A_{m-k},$ and they are orthogonal to each other.
\end{proof}


It is well known (e.g \cite{book}) that for every root of the root system one can find a distinguished basis starting from this root.
Therefore, we can choose arbitrary root $e_{k}+\ldots+e_{m}$ as a first vector of a  distinguished basis, and all other vectors from this basis will belong to its right orthogonal complement. If $m-k=l$ is fixed, then we have $n-l$ options to choose a root, and by Lemma  \ref{orth} the complement is split as $A_{l}\oplus A_{n-1-l}$. Therefore, we have to choose bases in $A_{l}$ and $A_{n-1-l}$ and then shuffle them in one of $\binom{n-1}{l}$ ways.

\begin{example}
We can describe all distinguished bases of positive roots for $A_1$, $A_2$ and $A_3$.
$$
A_1:\quad \{e_1\}
$$
$$
A_2:\quad \{e_1,e_2\}\quad \{e_2,e_1+e_2\}\quad \{e_1+e_2,e_1\}
$$
$$
A_3: \{e_1,e_2,e_3\}\quad \{e_1,e_3,e_2+e_3\}\quad \{e_1,e_2+e_3,e_2\}$$
$$\{e_2,e_1+e_2,e_3\}\quad \{e_2,e_3,e_1+e_2+e_3\}\quad \{e_2,e_1+e_2+e_3,e_1+e_2\}$$
$$\{e_3,e_1,e_2+e_3\}\quad \{e_3,e_2+e_3,e_1+e_2+e_3\}\quad \{e_3,e_1+e_2+e_3,e_1\}$$
$$\{e_1+e_2,e_1,e_3\}\quad \{e_1+e_2,e_3,e_1\}$$
$$\{e_2+e_3,e_1+e_2+e_3,e_2\}\quad \{e_2+e_3,e_2,e_1+e_2+e_3\}$$ 
$$\{e_1+e_2+e_3,e_1,e_2\}\quad  \{e_1+e_2+e_3,e_2,e_1+e_2\}\quad \{e_1+e_2+e_3,e_1+e_2,e_1\}$$
\end{example}

\section{Parking functions}

\begin{definition} 
A {\it parking,} or {\it preference function} on $n$ elements is a function
$$f : \left\{1, \ldots, n\right\} \rightarrow \left\{1, \ldots, n\right\}
\ \mbox{\rm such that}\ |f^{-1}(\left\{1, \ldots, k\right\})| \geq k\ \forall k.$$
\end{definition}

\begin{definition}
Let $e=e_{i}+e_{i+1}+\ldots+e_{j}$ be a root of $A_n$ root system.
We define its {\it initial point} as $\In(e)=i$.
For a basis $A=\{a_1,\ldots, a_n\},$ we define the {\it initial vector}
as the integer sequence
$\In(A)=(\In(a_1),\ldots,\In(a_n)).$
\end{definition}

\begin{lemma}
If  $A$ is a basis in the root system, then $\In(A)$ is a parking function.
\end{lemma}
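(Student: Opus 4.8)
The plan is to verify the defining inequality of a parking function directly, using nothing more than linear independence of $A$. Implicitly $A=\{a_1,\dots,a_n\}$ is a basis of \emph{positive} roots, since $\In$ is only defined on roots of the form $e_{ij}=e_i+\dots+e_j$; for such a root $\In(e_{ij})=i\in\{1,\dots,n\}$, so $\In(A)$ is a genuine function $\{1,\dots,n\}\to\{1,\dots,n\}$, and it remains to check the parking inequality $|f^{-1}(\{1,\dots,k\})|\ge k$ for $f=\In(A)$ and every $k$.

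First I would rephrase the inequality by passing to complements: since every index $i$ satisfies either $\In(a_i)\le k$ or $\In(a_i)\ge k+1$, the condition $|f^{-1}(\{1,\dots,k\})|\ge k$ is equivalent to $|\{i:\In(a_i)\ge k+1\}|\le n-k$. The key observation is then that a positive root $a_i=e_{pq}$ with $\In(a_i)=p\ge k+1$ has support $\{p,\dots,q\}\subseteq\{k+1,\dots,n\}$, hence lies in the subspace $V_k=\Span(e_{k+1},\dots,e_n)$ of dimension $n-k$. Because the vectors of $A$ are linearly independent, at most $\dim V_k=n-k$ of them can lie in $V_k$, which gives exactly the desired bound; as $k$ was arbitrary, $\In(A)\in\PF_n$.

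I do not expect any genuine obstacle: the statement collapses to a single dimension count once one notices that ``initial point exceeding $k$'' forces a root into the $(n-k)$-dimensional coordinate subspace spanned by the last $n-k$ simple roots. It is worth emphasizing that this argument uses only that $A$ is a basis of positive roots, and not that it is distinguished. The distinguished hypothesis, together with the reconstruction encoded in Lemmas \ref{hole} and \ref{chain}, will instead be what is needed for the substantially harder converse---that $\In$ is in fact a bijection onto $\PF_n$---where one must recover the individual roots, and their ordering, from the initial vector alone.
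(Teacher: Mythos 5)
Your argument is correct and is essentially identical to the paper's proof: both reduce the parking inequality to the observation that a root with initial point exceeding $k$ lies in the $(n-k)$-dimensional subspace spanned by $e_{k+1},\ldots,e_n$, so linear independence bounds the count by $n-k$. The only difference is that you argue directly while the paper argues by contradiction, and your explicit remark that the distinguished hypothesis is not needed here is a fair (if implicit in the paper) observation.
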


\begin{proof}
Suppose that $\In(A)$ is not a parking function, i. e. there exists $k$ such that
$$|f^{-1}(\{1,\ldots,k\})|<k.$$ Therefore, $|f^{-1}(\{k+1,\ldots,n\})|>n-k$,
and  $A$ contains more than $n-k$ vectors from the $(n-k)$-dimensional subspace
spanned by $e_{k+1},\ldots e_{n}$, hence $A$ is a linearly dependent collection.
\end{proof}

\begin{theorem}
\label{Tmain}
Distinguished bases for $A_n$ root system are in 1-to-1 correspondence with the parking functions on $n$ elements.
The bijection is given by the ``initial vector'' map.
\end{theorem}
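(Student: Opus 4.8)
The plan is to combine a counting argument with injectivity. The preceding Lemma shows that $\In$ sends every distinguished basis of positive roots to a parking function, so $\In$ is a well-defined map from the set of distinguished bases to $\PF_n$. By the theorem of Lyashko and Looijenga the source has exactly $(n+1)^{n-1}$ elements, and classically $|\PF_n|=(n+1)^{n-1}$ as well. Since both sets are finite of the same cardinality, it suffices to prove that $\In$ is injective; equivalently, that a distinguished basis can be reconstructed from its initial vector. (One may instead prove surjectivity, which amounts to the explicit geometric reconstruction of a basis from a parking function.)

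I would carry out the reconstruction in the arc/tree model of Proposition~\ref{dbarcs}: a distinguished basis is a collection of $n$ pairwise non-crossing labeled arcs on the vertices $\{0,1,\dots,n\}$ satisfying rules (1)--(3), and by property (4) this collection is a spanning tree. The initial vector records, for the arc labeled $i$, its left endpoint $\In(a_i)-1$, and the unknowns are the right endpoints. First I would extract the constraints coming from coincident endpoints: by rule (1) arcs sharing a left end are nested with labels \emph{decreasing} outward, and by rule (2) arcs sharing a right end are nested with labels \emph{increasing} outward. Thus, since equal values of $\In$ mean equal left ends, once we read off from the initial vector which arcs share a left endpoint, the relative nesting of those arcs and the direction of their labels is already forced.

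To pin down the right endpoints themselves I would induct on the partial order $\succeq$ of Lemma~\ref{hole}, reconstructing roots of small support first. Lemma~\ref{hole} gives each $a_i$ a single gap $g_i$, so $\supp(a_i)=\{g_i\}\sqcup\Span(a_i,A)$, and Lemma~\ref{chain} says the part of the support to the left of $g_i$ is tiled by arcs with labels $>i$ while the part to its right is tiled by arcs with labels $<i$. Feeding the already-reconstructed smaller arcs into this left/right splitting determines $\supp(a_i)$, and in particular its right endpoint, from its left endpoint and label; the parking condition $|f^{-1}(\{1,\dots,k\})|\ge k$ is exactly what guarantees that at each stage enough arcs begin far enough to the left for the required tiling to exist, so the procedure never stalls. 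Running this through for all $i$ recovers every right endpoint and yields injectivity.

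The main obstacle is the rigidity (uniqueness) of this reconstruction: one must show that non-crossing together with rules (1)--(3) leaves no freedom in the right endpoints once the left endpoints are fixed. I expect the cleanest way to make the inductive step airtight is to run it in parallel with the recursive construction of distinguished bases via orthogonal complements. Choosing $a_1=e_k+\dots+e_m$ and applying Lemma~\ref{orth} to split the complement as $A_{m-k}\oplus A_{n-m+k-1}$, one checks that under $\In$ this matches a recursive decomposition of parking functions: the value $k=\In(a_1)$ separates the initial points used by the two factors, which lie in $\{k+1,\dots,m\}$ and in $\{1,\dots,k-1\}\cup\{m+1,\dots,n\}$ respectively, while the $\binom{n-1}{m-k}$ shuffles of the two sub-bases correspond to the interleaving of these ranges. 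Induction on $n$ then reduces injectivity for $A_n$ to injectivity for the two smaller factors. The delicate point to verify is that the right endpoint $m$ of the first vector is itself forced by the full initial vector; this is precisely where Lemma~\ref{chain} and the parking-function inequality must be combined.
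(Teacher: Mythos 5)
Your proposal is correct, and its essential core---the injectivity argument---is the same as the paper's: both reduce the uniqueness of the right endpoints to Lemma~\ref{hole} (each root has a single gap) together with Lemma~\ref{chain} (the part of the support left of the gap is tiled by subordinate roots with larger labels, the part to its right by subordinate roots with smaller labels), processing the roots in an order compatible with $\succeq$. One detail you should make explicit is that this processing order has to be read off from $f$ alone (decreasing value of $f$, ties broken by decreasing index); the paper does this via its sets $B(k)$, $C(k)$ and the quantities $c(k)$, $b(k)$, which are exactly your ``left/right tiling'' made algorithmic. Where you genuinely diverge is surjectivity: you outsource it to the Lyashko--Looijenga count $(n+1)^{n-1}$ combined with the classical enumeration of parking functions, whereas the paper runs the same gap-and-tiling recipe as an explicit algorithm on an arbitrary parking function and checks, via Lemma~\ref{L1}, that the output is a distinguished basis. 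The paper's route costs slightly more work but buys more: it avoids relying on the (geometric, nontrivial) Lyashko--Looijenga theorem, it reproves that count for type $A$ as a corollary of the parking-function enumeration, and the explicit inverse is precisely what is reused in Theorem~\ref{T2} and in the description of the braid group action. Your alternative inductive scheme via Lemma~\ref{orth} (matching the splitting $A_l\oplus A_{n-1-l}$ of the right orthogonal complement with a decomposition of parking functions according to which positions take values in $\{k+1,\dots,m\}$) is plausible but is the least developed part of the proposal; verifying that the shuffle of the two sub-parking-functions is again a parking function, and conversely, is exactly the bookkeeping the paper's direct construction sidesteps.
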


\begin{proof}
We construct an inverse to the ``initial vector'' map.
Given a parking function $f$ on $n$ elements, we need to find a set of $n$ positive roots $A = \left\{a_1,\ldots,a_n\right\}$ such that $\In(A) = f.$ 
We will describe these roots 
in the following order: if $f(k)<f(l)$ or $f(k) = f(l), k < l$, we will list $a_l$ first.

Let us recover the root $a_k$. Remark that, by construction, all roots $a_{s}\prec a_k$ are already found. 
Let $C(k)$ be the union of supports of constructed roots $a_i$ such that $i > k;$ $B(k)$ be  the union of supports of constructed roots $a_j$ such that $j < k.$ Consider the numbers

$$c(k) = 
\left\{\begin{matrix}
\max (X = \left\{j: [f(k),j] \subset C(k) \right\}), & X \neq \emptyset;
\\ f(k) - 1,  & X = \emptyset;\end{matrix}\right.$$

$$b(k) = 
\left\{\begin{matrix}
\max (Y = \left\{j: [c(k)+2,j] \subset B(k) \right\}), & Y \neq \emptyset;
\\ c(k) + 1,  & Y = \emptyset,\end{matrix}\right.$$
and the root $a_k=e_{f(k)} + \ldots + e_{b(k)}.$
Since $\supp(a_k) \backslash \left(B(k) \cup  C(k) \right) = \left\{c(k) + 1\right\} \neq \emptyset$,
constructed roots are linearly independent. 
By Lemma \ref{L1} one has: 
$$
\left\langle a_k,a_j\right\rangle = 0, \quad k > j;\quad 
 \left\langle a_j, a_k\right\rangle = 0, \quad j > k.
$$
This proves  that we obtain a distinguished basis. Since $\In(A) = f$ by construction, $\In$ is surjective.
To prove that $\In$ is injective, we have to check that $a_k$ is uniquely determined by the previously constructed roots.
By construction, $\left(B(k)\cup C(k)\right) \cap \supp(a_k)=\Span(a_k,A).$
By Lemma \ref{hole}, the set $\Gap(a_k, A) = (\supp(a_k) \backslash \Span(a_k, A))$ contains exactly 1 element,
which divides $\Span(a_k, A)$ in two connected components. If $a_k=e_{f(k)}+\ldots+e_{j}$, then $e_{j+1}\notin B(k)$ - otherwise
the vector starting from $e_{j+1}$ should go after $a_k$ in $A$.  It rests to prove that all numbers between $f(k)$ and $\Gap(a_k, A)$ are covered by $C(k)$, and all numbers between $\Gap(a_k, A)$ and $j$ are covered by $B(k)$. This follows from Lemma \ref{chain}.
\end{proof}

\begin{example}
Let us use this 
method to construct the distinguished bases for the parking functions $2,2,1$ and $2,1,1$.

$2,2,1$: we will start with 
$a_2$, then 
construct $a_1$ and $a_3$. $C(2) = B(2) = \emptyset,$ hence $c(2) = 1, b(2) = 2, a_2 = e_2.$ Thus $C(1) = \left\{2\right\}, B(1) = \emptyset,$ and $c(1) = 2 \Rightarrow b(1) = 3 \Rightarrow a_1 = e_2 + e_3.$ Finally, $C(3) = \emptyset, B(3) = \left\{2, 3\right\};$ therefore, $c(3) =0 \Rightarrow b(3) = 3 \Rightarrow a_3 = e_1 + e_2 + e_3.$ 

$2,1,1$: we will start with 
$a_1$, then 
construct $a_3$ and $a_2$. $C(1) = B(1) = \emptyset,$ hence $c(1) = 1, b(1) = 2, a_1 = e_2.$ Thus $C(3) = \emptyset, B(3) = \left\{2\right\},$ and $c(3) = 0 \Rightarrow b(3) = 2 \Rightarrow a_1 = e_1 + e_2.$ Finally, $C(2) = \left\{1, 2\right\}, B(2) = \left\{2\right\};$ therefore, $c(2) =2 \Rightarrow b(3) = 3 \Rightarrow a_3 = e_1 + e_2 + e_3.$ 
\end{example}

It turns out that the reconstruction of a distinguished basis from a parking function can be drawn on a picture. We will need some combinatorial constructions from \cite{haglund}.

\begin{definition} 
Let us define $\mathbb{Y}_n$ as the set of all Young diagrams inside triangle formed by the coordinate axis and the line $y = x - n.$ 
The boundary of such a diagram is a lattice path of length $2n$ with steps $(0,1)$ and $(1,0),$
which we will call the {\it Dyck path}.
\end{definition}

It is well known that the number of elements in $\mathbb{Y}_n$ equals to the $n$-th Catalan number 
$$
c_n = \frac{1}{n+1}\binom{2n}{n}.
$$
 
\begin{definition}
A {\it parking function diagram} is a diagram from $\mathbb{Y}_n$, where numbers from $1$ to $n$ are written at the end of each row
such that in every column the numbers are increasing upwards.
We denote by $P_k$ the SE angle of a row with number $k.$
\end{definition}

Given a parking function diagram $D$, consider a function $f_{D}:\{1,\ldots,n\}\to \{1,\ldots,n\}$ mapping a number to its $x$-coordinate on $D$ increased by 1. One can check (see e. g. \cite{haglund}) that $f_{D}$ is a parking function and the correspondence between $D$ and $f_{D}$ is bijective.

\begin{example}
The parking function diagram $D$ on Figure \ref{fig:example of pf} corresponds to the function 
$$f_{D}=\left(\begin{matrix} 1& 2& 3& 4& 5\\ 1& 5& 3& 1& 4\end{matrix}\right).$$

\begin{figure}[ht]
\begin{tikzpicture}[line width=0.6pt]
  \draw [<->,thick] (1,4) node (yaxis) [above] {$y$}
     |- (5,3) node (xaxis) [right] {$x$};
  \draw [thick] (1,0) -- (1,3);
  \draw [thick] (0,3) -- (1,3);
  \draw (3,3) -- (3,2.5) -- (2.5,2.5) -- (2.5,2) -- (2,2) -- (2,1.5) -- (1,1.5);
  \draw (1.5,3) -- (1.5,1.5);
  \draw (2,3) --(2,2);
  \draw (2.5,3) -- (2.5,2.5);
  \draw (1,2.5) -- (2.5,2.5);
  \draw (1,2) -- (2.5,2);
  \draw (1,2.5) -- (1,2);
  \draw (2.5,2.5) -- (2.5,2);
  \draw [dashed] (3.5,3) -- node [below right] {$y = x - 5$}(1,0.5);
  \draw (1.15,0.9) node {\bf 1};
  \draw (1.15,1.3) node {\bf 4};
  \draw (2.1,1.8) node {\bf 3};
  \fill[black] (2,1.5) circle (2pt);
  \draw (2,1.5) node [below] {$\bf P_3$};  
  \draw (2.6,2.3) node {\bf 5};
  \draw (3.1,2.8) node {\bf 2};
\end{tikzpicture}
\caption{}
\label{fig:example of pf}
\end{figure}
\end{example}



\begin{theorem}
\label{T2}
Let $A=(a_1,\ldots,a_n)$ be a distinguished basis for $A_{n}$ root system, let $D(A)$ be the parking function diagram corresponding to $\In(A)$. 

Let us start from $P_k$ and go strictly north-east until we meet first $P_l$ such that $l > k$ or encounter the Dyck path or the $x$-axis.
Our path started at $x$-coordinate $\In(a_k) - 1$ and ended at some $x$-coordinate $\Ter(k)$. Then
$$a_{k}=e_{\In(a_k)}+\ldots+e_{\Ter(k)}.$$
\end{theorem}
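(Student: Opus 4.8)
The plan is to show that the geometric procedure of Theorem \ref{T2} reproduces exactly the recursive construction already established in the proof of Theorem \ref{Tmain}. Both constructions start from the same data, namely the parking function $\In(A)$ and its diagram $D(A)$, and both aim to recover the root $a_k = e_{\In(a_k)} + \ldots + e_{\Ter(k)}$. Since Theorem \ref{Tmain} guarantees that $\In$ is a bijection, it suffices to verify that the north-east ray construction computes the same terminal index, i.e. that $\Ter(k) = b(k)$ in the notation of the proof of Theorem \ref{Tmain}. First I would carefully set up the dictionary between the diagram $D(A)$ and the combinatorial objects $B(k)$, $C(k)$: the point $P_l$ sits at $x$-coordinate $\In(a_l)-1$, and the ordering ``$f(k)<f(l)$ or $f(k)=f(l),\,k<l$'' used to list the roots corresponds precisely to the order in which the labels appear reading the diagram, with smaller-labelled inner rows sitting above larger ones in each column.

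The key geometric observation I would isolate is the meaning of moving strictly north-east from $P_k$. Each diagonal step increases both coordinates by one unit; encountering a point $P_l$ with $l>k$ on this diagonal signals that the support of $a_l$ (a root listed \emph{after} $a_k$, hence contributing to $B(k)$) abuts the growing support of $a_k$, which is exactly the ``$e_{j+1}\notin B(k)$'' stopping condition appearing in the proof of Theorem \ref{Tmain}. Conversely, meeting the Dyck path or the $x$-axis encodes the exhaustion of the sets $Y$ (respectively $X$) defining $b(k)$ and $c(k)$. So the plan is to translate each of the three stopping events of the ray into one of the cases that terminate the intervals $[c(k)+2,j]\subset B(k)$ and $[f(k),j]\subset C(k)$, and to check that the diagonal faithfully tracks the $\Span$ of $a_k$: by Lemma \ref{chain} the span splits into a left part covered by roots $a_{k_j}$ with $k_j>i$ and a right part covered by roots with $k_j<i$, which is precisely the $C(k)$/$B(k)$ dichotomy, with the single $\Gap$ element of Lemma \ref{hole} corresponding to the one lattice cell the ray passes through that belongs to neither.

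The main obstacle I anticipate is making the correspondence between the north-east diagonal and the alternation between $C(k)$ and $B(k)$ fully rigorous, rather than merely suggestive from the pictures. The subtlety is that the diagonal ray does not literally trace the support of $a_k$ as an interval on the $x$-axis; instead the column structure of $D(A)$ encodes containment of supports, so I would need to prove a lemma stating that as the ray ascends through successive rows, the $x$-coordinates it crosses are exactly the elements of $\supp(a_k)$, in increasing order, and that the first index $l>k$ encountered (or the boundary) pins down $\Ter(k)$. Establishing that this ``reading off'' of the diagram agrees with the inductive definition of $b(k)$ — in particular that the interplay of conditions (1)--(3) of Proposition \ref{dbarcs} forces the labels met along the diagonal to be increasing until the stopping label, matching the maximality in the definitions of $c(k)$ and $b(k)$ — is where the real work lies.

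Once this diagram-to-support dictionary is in place, the theorem follows formally: both $\Ter(k)$ (from the ray) and $b(k)$ (from the recursion) satisfy the same maximality characterization over the same sets, and since the recursive construction in Theorem \ref{Tmain} was shown to yield the unique distinguished basis with initial vector $\In(A)$, the root recovered by the geometric procedure must coincide with $a_k$ for every $k$. I would close by remarking that the examples in the paper (the diagram of Figure \ref{fig:example of pf} and the worked parking functions $2,2,1$ and $2,1,1$) serve as consistency checks for the stopping-condition translation.
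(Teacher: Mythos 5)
Your opening reduction is sound and matches the paper's: by the injectivity of $\In$ established in Theorem \ref{Tmain}, it suffices to show that the roots produced by the ray procedure form a distinguished basis with the prescribed initial vector (equivalently, that $\Ter(k)=b(k)$). But from there the proposal diverges onto a much harder path and does not complete it. You propose to match the ray step-by-step against the recursion defining $c(k)$ and $b(k)$, and you explicitly defer the central lemma --- that the $x$-coordinates crossed by the ray are exactly $\supp(a_k)$ and that the three stopping events correspond to the maximality conditions in the definitions of $c(k)$ and $b(k)$ --- as ``where the real work lies.'' That lemma \emph{is} the theorem; without it the proposal is a plan, not a proof. Moreover, the dictionary you sketch for it is wrong in a load-bearing place: a root $a_l$ with $l>k$ contributes to $C(k)$ (supports of constructed roots with index greater than $k$), not to $B(k)$, while the points $P_l$ the ray passes \emph{through} without stopping have $l<k$ and it is their supports that build up $B(k)$ and justify continuing the right extension. (Check this on the paper's $n=12$ example with $k=4$: the ray passes $P_3$, whose support lies in $B(4)$, and stops at $P_6$, whose support lies in $C(4)$.) Also, the roots covering the left portion $[f(k),c(k)]$ of $\supp(a_k)$ are generally not met by the ray at all, so the ray does not literally trace the $C(k)$/$B(k)$ alternation; any rigorous version of your lemma must account for this.

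The paper avoids all of this. It applies the procedure to produce candidate roots $a'_k=e_{\In(a_k)}+\ldots+e_{\Ter(k)}$, notes that $\In(a'_k)=\In(a_k)$ by construction, and then checks that the corresponding arcs satisfy conditions (1)--(3) of Proposition \ref{dbarcs} directly from two elementary facts about $D(A)$: numbers increase upward in columns (giving condition (1)), and two points $P_k$, $P_l$ lying on the same NE diagonal correspond exactly to one arc's right end meeting another's left end, with the label comparison read off from which point is to the left (giving (2) and (3)). This makes the collection $\{a'_k\}$ a distinguished basis, and injectivity of $\In$ forces $a'_k=a_k$. If you want to salvage your route, I would instead redirect the effort into this verification of the arc conditions; Lemmas \ref{hole} and \ref{chain} are then not needed beyond their use inside Theorem \ref{Tmain}.
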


\begin{proof}
Let us apply this procedure for every $k$ and consider roots $a'_{k}=e_{\In(a_k)}+\ldots+e_{\Ter(k)}.$
By Theorem \ref{Tmain}, it is sufficient to prove that $a'_{k}$ form a distinguished basis. 

For this purpose we need to check the conditions (1)-(3) of Proposition \ref{dbarcs}.
The condition (1) holds, since the numbers are increasing in columns of $D(A).$
If two arcs have the same end, then the corresponding numbers lie on the same diagonal line in $D(A)$,
hence the left one is bigger and the condition (2) is satisfied as well.
Finally, if we are in situation (3), then the corresponding numbers lie on the same diagonal line in $D(A)$,
but the left one is smaller.
\end{proof}


\begin{example}
Let $n=12$ and the parking function is given by the formula
$$f=\left(\begin{matrix} 1& 2& 3& 4& 5& 6& 7& 8& 9& 10\quad 11\quad 12\\ 3& 11& 7& 5& 9& 8& 5& 2& 1& 10\quad 2\quad 12\end{matrix}\right).$$
The corresponding diagram is shown in Figure \ref{fig:example of bases}.
The corresponding distinguished basis has a form
$$
\left\{e_3, e_{11},e_7, e_{5,7}, e_9, e_{8,9}, e_5, e_{2,9}, e_{1,9}, e_{10,11}, e_{2,3}, e_{12}\right\}.
$$
It can be illustrated by arcs as in Figure \ref{fig:example of arcs}.
\end{example}


\begin{figure}[ht]
\begin{tikzpicture}[scale=0.6, line width=0.6pt]
  \draw [<->,thick] (1,15) node (yaxis) [above] {$y$}
     |- (16,14) node (xaxis) [right] {$x$};
  \draw [thick] (1,0) -- (1,14);
  \draw [thick] (0,14) -- (1,14);
  \draw (1,13) -- (12,13) --node [right]{\bf 12} (12,14);
  \draw (1,12) -- (11,12) --node [right]{\bf 2}(11,13) -- (11,14);
  \draw (1,11) -- (10,11) --node[right]{\bf 10}(10,12) -- (10,14);
  \draw (1,10) -- (9,10) --node[right]{\bf 5}(9,11) -- (9,14);
  \draw (1,9) -- (8,9) --node[right]{\bf 6}(8,10) -- (8,14);
  \draw (1,8) -- (7,8) --node[right]{\bf 3}(7,9) -- (7,14);
  \draw (1,7) -- (5,7) --node[right]{\bf 7}(5,8) -- (5,14);
  \draw (1,6) -- (5,6) --node[right]{\bf 4}(5,7) -- (5,14);
  \draw (1,5) -- (3,5) --node[right]{\bf 1}(3,6) -- (3,14);
  \draw (1,4) -- (2,4) --node[right]{\bf 11}(2,5) -- (2,14);
  \draw (1,3) -- (2,3) --node[right]{\bf 8}(2,4) -- (2,14);
  \draw (1,2) --node[right]{\bf 9}(1,3);
  \fill[black] (7,8) circle (4pt);
  \draw (7,8) node [below] {{\bf $P_3$}};
  \fill[black] (5,6) circle (4pt);
  \draw (5,6) node [below] {{\bf $P_4$}};
  \draw (6,8) -- (6,14);
  \draw (4,6) -- (4,14);
\end{tikzpicture}
\caption{}
\label{fig:example of bases}
\end{figure}


\begin{figure}[ht]
\begin{tikzpicture}[scale = 0.8]
\draw (9,0) arc (0:180:0.5cm);
\draw (8,0) arc (0:180:0.5cm);
\draw (8,0) arc (0:180:1cm);
\draw (6,0) arc (0:180:0.5cm);
\draw (6,0) arc (0:180:1cm);
\draw (6,0) arc (0:180:4cm);
\draw (6,0) arc (0:180:4.5cm);
\draw (4,0) arc (0:180:0.5cm);
\draw (4,0) arc (0:180:1.5cm);
\draw (2,0) arc (0:180:0.5cm);
\draw (0,0) arc (0:180:0.5cm);
\draw (0,0) arc (0:180:1cm);
\draw[-,>=stealth] (-4,0)--(10,0);
\draw (-1,0.5) node {1};
\draw (7,0.5) node {2};
\draw (3,0.5) node {3};
\draw (2.5,1.7) node {4};
\draw (5,0.5) node {5};
\draw (4.8,1.2) node {6};
\draw (2,0.5) node {7};
\draw (2.3,3.7) node {8};
\draw (0,4.5) node {9};
\draw (7,1.3) node {10};
\draw (-1,1.3) node {11};
\draw (9,0.5) node {12};
\draw (-3,-0.3) node {$0$};
\draw (-2,-0.3) node {$1$};
\draw (-1,-0.3) node {$2$};
\draw (0,-0.3) node {$3$};
\draw (1,-0.3) node {$4$};
\draw (2,-0.3) node {$5$};
\draw (3,-0.3) node {$6$};
\draw (4,-0.3) node {$7$};
\draw (5,-0.3) node {$8$};
\draw (6,-0.3) node {$9$};
\draw (7,-0.3) node {$10$};
\draw (8,-0.3) node {$11$};
\draw (9,-0.3) node {$12$};
\end{tikzpicture}
\caption{}
\label{fig:example of arcs}
\end{figure}

\begin{proposition}
Let $\sigma\in S_{n}$ be a permutation. There exists a distinguished basis $A$ with $\In(A)=\sigma$, which can be constructed 
using the following procedure.
Let $\Ter(k)$ be the maximal number such that
$$[\sigma(k),\Ter(k)]\subset \sigma(\{1,\ldots,k\}).$$ Then $a_{k}=e_{\sigma(k)}+\ldots+e_{\Ter(k)}$.
\end{proposition}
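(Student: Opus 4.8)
The plan is to lean on Theorem \ref{Tmain}, which guarantees that every parking function is the initial vector of a \emph{unique} distinguished basis of positive roots. A permutation $\sigma$ is a parking function (indeed $|\sigma^{-1}(\{1,\ldots,k\})|=k$ for all $k$), so it suffices to check that the roots $a_k=e_{\sigma(k)}+\ldots+e_{\Ter(k)}$ produced by the stated procedure form a distinguished basis; since manifestly $\In(a_k)=\sigma(k)$, the uniqueness in Theorem \ref{Tmain} then identifies this basis as the one attached to $\sigma$, which is exactly the existence-plus-formula content of the statement. To verify the distinguished basis property I would invoke Proposition \ref{dbarcs}, checking that the arcs of the $a_k$ are pairwise non-intersecting and satisfy conditions (1)--(3); condition (4) then holds automatically by the remark following Proposition \ref{dbarcs}.

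The single observation driving everything is that, writing $S_k=\sigma(\{1,\ldots,k\})$, one has $m\in S_k\iff \sigma^{-1}(m)\le k$ because $\sigma$ is a bijection. By definition $\supp(a_k)=[\sigma(k),\Ter(k)]$ is the longest interval with left endpoint $\sigma(k)$ contained in $S_k$; it is nonempty (so $a_k$ is a genuine root, as $\sigma(k)\in S_k$), and by maximality either $\Ter(k)=n$ or $\Ter(k)+1\notin S_k$. First I would record that the $n$ arcs are distinct with distinct left endpoints $\sigma(k)-1$, so condition (1) of Proposition \ref{dbarcs} (equal left endpoints) is vacuous for a permutation.

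Next I would rule out crossings. Suppose $\supp(a_k)$ and $\supp(a_{k'})$ overlap properly, say $\sigma(k)<\sigma(k')\le \Ter(k)<\Ter(k')$. Then $\sigma(k')\in[\sigma(k),\Ter(k)]\subseteq S_k$, so $k'<k$ and hence $S_{k'}\subseteq S_k$; thus $[\sigma(k'),\Ter(k')]\subseteq S_{k'}\subseteq S_k$. Since the two intervals overlap, their union $[\sigma(k),\Ter(k')]$ is again an interval contained in $S_k$, with right endpoint $\Ter(k')>\Ter(k)$, contradicting the maximality of $\Ter(k)$. The mirror configuration is excluded symmetrically, which gives the non-intersection demanded by Corollary \ref{C2}. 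Conditions (2) and (3) are then short consequences of the same bijection identity: if $\Ter(k)=\Ter(k')$ with $\sigma(k)<\sigma(k')$, then the inner arc $a_{k'}$ satisfies $\sigma(k')\in[\sigma(k),\Ter(k)]\subseteq S_k$, whence $k'<k$, establishing condition (2); and if $\Ter(k)=\sigma(k')-1$, then $\sigma(k')=\Ter(k)+1\notin S_k$, whence $k'>k$, establishing condition (3).

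Having checked (1)--(4), Proposition \ref{dbarcs} shows the $a_k$ form a distinguished basis with initial vector $\sigma$, completing the argument. The main obstacle is the non-crossing claim, and it is the only place where the maximality of $\Ter$ is used in an essential way; once the union-of-intervals argument is in place, the remaining conditions collapse to the elementary equivalence $m\in S_k\iff\sigma^{-1}(m)\le k$. As a consistency check one could alternatively specialize Theorem \ref{T2}, noting that for a permutation the diagram $D(\sigma)$ is the full staircase and that the north-east path from $P_k$ terminates precisely at $\Ter(k)$, but the direct verification above seems cleaner.
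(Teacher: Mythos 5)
Your argument is correct, but it is not the route the paper takes. The paper's proof is a two-line specialization of Theorem \ref{T2}: a permutation is the parking function whose diagram is the maximal staircase in $\mathbb{Y}_n$, and the north-east walk from $P_k$ along the diagonal stops at the first $x$-coordinate $m\ge\sigma(k)$ with $\sigma^{-1}(m+1)>k$, which is exactly the $\Ter(k)$ of the statement (the paper leaves this last identification implicit). You instead verify directly that the roots $a_k=e_{\sigma(k)}+\ldots+e_{\Ter(k)}$ satisfy the arc conditions of Proposition \ref{dbarcs}, using only the equivalence $m\in S_k\iff\sigma^{-1}(m)\le k$: distinct left endpoints make condition (1) vacuous, the union-of-intervals argument rules out crossings via maximality of $\Ter(k)$, and conditions (2) and (3) follow from $\sigma(k')\in S_k\Rightarrow k'<k$ and $\Ter(k)+1\notin S_k$ respectively; condition (4) is automatic by the remark after Proposition \ref{dbarcs}. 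All of these steps check out, and citing the bijectivity of $\In$ from Theorem \ref{Tmain} (or just $\In(a_k)=\sigma(k)$) finishes the existence claim. What your approach buys is self-containment: it bypasses the diagram machinery of Theorem \ref{T2} entirely and makes explicit the small combinatorial verification that the paper's one-line reduction sweeps under the rug. What it costs is length; the paper's proof is shorter precisely because Theorem \ref{T2} has already done the arc-condition bookkeeping once and for all. You correctly note the paper's route as your ``consistency check'' at the end.
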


\begin{proof}
A permutation is a parking function whose diagram is a maximal Young diagram in $\mathbb{Y}_n$.
Now the statement follows from Theorem \ref{T2}.
\end{proof}

\begin{proposition}
A parking function $f$ is non-decreasing if and only if the numbers on its diagram are equal to their $y$-coordinates (shifted by $n$).
Therefore, 
\begin{itemize}
\item[(i)]The procedure of Theorem \ref{T2} can be described as follows: we start from the point $P_k$ and go strictly north-east
until we touch a diagram or the $x$-axis. If $(\In(k)-1)$ and $\Ter(k)$ are the $x$-coordinates of our start and finish respectively, then
$$a_{k}=e_{\In(k)}+\ldots+e_{\Ter(k)}.$$
\item[(ii)] Non-decreasing parking functions on $n$ elements are in 1-to-1 correspondence with Dyck paths of the length $2n.$
\end{itemize}
\end{proposition}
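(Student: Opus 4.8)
The plan is to reduce everything to a single combinatorial statement about the labels of the rows of the parking function diagram. First I would fix notation: number the rows of the diagram $D$ of $f$ from the bottom by $r=1,\ldots,n$, so that $r$ (up to the global shift by $n$) is precisely the $y$-coordinate of row $r$; let $\ell_r$ denote the length of row $r$ and $m_r\in\{1,\ldots,n\}$ the number written at its end. Since $D\in\mathbb{Y}_n$ is a Young diagram we have $\ell_1\le \ell_2\le\cdots\le\ell_n$, the defining relation of $f_D$ reads $f(m_r)=\ell_r+1$, and the column condition says that whenever $\ell_r=\ell_{r'}$ with $r<r'$ one has $m_r<m_{r'}$. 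In this language the assertion ``the numbers equal their $y$-coordinate shifted by $n$'' is exactly $m_r=r$ for all $r$, and the first task is to prove that this holds if and only if $f$ is non-decreasing.

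For the implication $m_r=r\Rightarrow f$ non-decreasing I would simply note that then $f(r)=\ell_r+1$ and the $\ell_r$ are non-decreasing in $r$, so $f$ is non-decreasing. For the converse I would argue by taking the smallest $r$ with $m_r\neq r$. Since the labels $1,\ldots,r-1$ are already used by the lower rows, necessarily $m_r>r$, so the label $r$ sits in some higher row $r'>r$, i.e.\ $m_{r'}=r$. Non-decreasingness of $f$ together with $r<m_r$ gives $f(r)\le f(m_r)$, while $r'>r$ gives $\ell_{r'}\ge\ell_r$, hence $f(r)=\ell_{r'}+1\ge \ell_r+1=f(m_r)$; the two inequalities force $\ell_r=\ell_{r'}$. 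But then rows $r$ and $r'$ end in the same column, so the column condition demands $m_r<m_{r'}=r$, contradicting $m_r>r$. This proves $m_r=r$ for all $r$.

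Granting the equivalence, part (ii) is then immediate: for a non-decreasing $f$ the labels are completely forced ($m_r=r$), so $f$ is determined by the underlying shape alone, and conversely every diagram in $\mathbb{Y}_n$ carrying this canonical labeling is a genuine non-decreasing parking function. Hence non-decreasing parking functions correspond bijectively to the shapes in $\mathbb{Y}_n$, that is, to Dyck paths of length $2n$ (whose number is the Catalan number $c_n$).

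For part (i) I would reconcile the stopping rule of Theorem \ref{T2} with the simpler ``go until you touch the diagram or the $x$-axis.'' The point is twofold. First, because $m_r=r$, travelling strictly north-east from $P_k$ one only ever meets rows of index $>k$, so \emph{every} SE-angle $P_l$ that the ray can encounter already satisfies $l>k$; the qualifier ``$l>k$'' is therefore automatic. Second, each $P_l$ is a corner of the right boundary of the diagram and hence lies on the Dyck path, so stopping at the first such $P_l$ is the same event as first touching the Dyck path. Combining the two, the stopping conditions of Theorem \ref{T2} collapse to ``touch the Dyck path or the $x$-axis,'' which is the assertion of (i), with $\Ter(k)$ the $x$-coordinate of that contact point. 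I expect this geometric reconciliation --- verifying that the ``$P_l$'' rule and the ``Dyck path'' rule genuinely coincide in the monotone case, and that the ray indeed exits where claimed --- to be the main obstacle, whereas the combinatorial equivalence and part (ii) are routine.
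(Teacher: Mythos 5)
Your argument is correct. Note that the paper states this proposition without any proof at all (it is treated as immediate from the definitions and from Theorem \ref{T2}), so there is no ``paper's approach'' to compare against; what you have written supplies the missing details. The core of your write-up --- the minimal-counterexample argument showing that for a non-decreasing $f$ the label $r$ must sit in row $r$, using monotonicity of $f$ to force two rows of equal length and then the column condition to derive a contradiction --- is exactly the verification the authors skip, and it is sound. Your reconciliation of the two stopping rules in (i) is also the right observation: once $m_r=r$, every SE corner met by the north-east ray from $P_k$ belongs to a higher row and hence automatically has label greater than $k$, so the clause ``first $P_l$ with $l>k$'' in Theorem \ref{T2} becomes vacuous and the rule collapses to ``first touch the boundary or the $x$-axis.'' The only thing I would add for completeness in (ii) is the one-line remark that the canonical labeling $m_r=r$ trivially satisfies the column condition and that $f_D$ is then a parking function because the $r$-th row from the bottom has length at most $r-1$; but this is routine and consistent with the bijection $D\leftrightarrow f_D$ already recorded in the paper.
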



\begin{example}
Consider a non-decreasing parking function with the values $(1,1,2,2,2,4,6)$. Its diagram is shown in Figure \ref{fig:non-decreasing pf} and one can check that it corresponds to the distinguished basis  
$A = (e_{17}, e_1, e_{25}, e_{23}, e_2, e_4, e_6).$
\end{example}

\begin{figure}[ht]
\begin{tikzpicture}[line width=0.6pt]
  \draw [<->,thick] (1,4) node (yaxis) [above] {$y$}
     |- (5,3) node (xaxis) [right] {$x$};
  \draw [thick] (1,-1) -- (1,3);
  \draw [thick] (0,3) -- (1,3);
  \draw (3.5,3) -- (3.5,2.5) -- (2.5,2.5) -- (2.5,2) -- (1.5,2) -- (1.5,1) -- (1,1);
  \draw (1.5,3) -- (1.5,1);
  \draw (2,3) --(2,2);
  \draw (2.5,3) -- (2.5,2);
  \draw (3,3) -- (3,2.5);
  \draw (1,2.5) -- (2.5,2.5);
  \draw (1,2) -- (2,2);
  \draw (1,0.5) -- (1.5,0.5) -- (1.5,1);
  \draw (1,1.5) -- (1.5,1.5);
  \draw [dashed] (4.5,3) -- node [below right] {$y = x - 7$}(1,-0.5);
  \draw [dashed] (1.5,1) -- (3,2.5);
  \draw [dashed] (1,0) -- (4,3);  
  \draw [dashed] (1.5,1.5) -- (2,2);  
  \draw (1.2,-0.1) node {\bf 1};
  \draw (1.2,0.3) node {\bf 2};
  \draw (1.7,0.8) node {\bf 3};
  \draw (1.7,1.3) node {\bf 4};
  \draw (1.7,1.8) node {\bf 5};
  \draw (2.7,2.3) node {\bf 6};
  \draw (3.7,2.8) node {\bf 7};
\end{tikzpicture}
\caption{}
\label{fig:non-decreasing pf}
\end{figure}




\section{Braid group action}
\label{sec:braid}

\begin{definition}(\cite{book})
Let $A=(a_1,\ldots,a_n)$ be some $n$-tuple of roots. We define two operations:
$$\alpha_{k}(A)=(a_1,\ldots,a_{k-1},a_{k+1},a_k-\left\langle a_k,a_{k+1}\right\rangle a_{k+1},a_{k+2},\ldots,a_n),$$
$$\beta_{k}(A)=(a_1,\ldots,a_{k-1},a_{k+1}-\left\langle a_k,a_{k+1}\right\rangle a_{k},a_k,a_{k+2},\ldots,a_n).$$
We will call them {\it left} and {\it right mutations} respectively. 
\end{definition}

\begin{proposition}(\cite{book})
If $A$ is a distinguished basis, then $\alpha_k(A)$ and $\beta_{k}(A)$ are distinguished bases too.
The following relations hold:
$$
\beta_k=\alpha_{k}^{-1},\quad \alpha_{k}\alpha_{m}=\alpha_{m}\alpha_{k}\quad (|m-k|>1),\quad \alpha_{k}\alpha_{k+1}\alpha_{k}=\alpha_{k+1}\alpha_{k}\alpha_{k+1}.
$$ 
In other words, the $\alpha_{k}$ define a representation of a braid group of type $A_n$ with $n$ strands on the set of distinguished bases.
\end{proposition}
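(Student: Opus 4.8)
The plan is to verify each assertion by direct computation from the definitions of $\alpha_k,\beta_k$, systematically exploiting the upper-triangularity of the Seifert form supplied by the distinguished-basis hypothesis. Throughout I write $\lambda=\langle a_k,a_{k+1}\rangle$, and I use two facts from above: by the Corollary $\langle v,v\rangle=1$ for every root $v$, and by the definition of a distinguished basis $\langle a_j,a_i\rangle=0$ whenever $j>i$.

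First I would check that the new entries are again roots. For $\alpha_k$ the only new vector is $c=a_k-\lambda a_{k+1}$, and expanding bilinearly gives $\langle c,c\rangle=\langle a_k,a_k\rangle-\lambda\langle a_k,a_{k+1}\rangle-\lambda\langle a_{k+1},a_k\rangle+\lambda^2\langle a_{k+1},a_{k+1}\rangle$. Since $k+1>k$ we have $\langle a_{k+1},a_k\rangle=0$, so this collapses to $1-\lambda^2+\lambda^2=1$; hence $c$ is a root, the $A_n$ root system being simply laced. The argument for $\beta_k$ is symmetric. Next I would show the reordered tuple is still distinguished: the components outside positions $k,k+1$ are untouched, so it suffices to check the pairs involving $a'_k=a_{k+1}$ and $a'_{k+1}=c$. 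A short case analysis does this: the new adjacent pair gives $\langle c,a_{k+1}\rangle=\lambda-\lambda\cdot1=0$; for $i<k$ both $\langle a_{k+1},a_i\rangle$ and $\langle a_k,a_i\rangle$ vanish by the original upper-triangularity, so $\langle a'_k,a'_i\rangle=\langle a'_{k+1},a'_i\rangle=0$; and for $i>k+1$ the values $\langle a_i,a_{k+1}\rangle$ and $\langle a_i,a_k\rangle$ vanish, giving $\langle a'_i,a'_k\rangle=\langle a'_i,a'_{k+1}\rangle=0$. Linear independence then follows from invertibility of $\alpha_k$, which I treat next.

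For $\beta_k=\alpha_k^{-1}$ I would apply $\beta_k$ to $\alpha_k(A)$: the relevant coefficient becomes $\langle a_{k+1},\,a_k-\lambda a_{k+1}\rangle=\langle a_{k+1},a_k\rangle-\lambda=-\lambda$, and substituting back into the definition of $\beta_k$ recovers $(\ldots,a_k,a_{k+1},\ldots)=A$; the reverse composition $\alpha_k\beta_k=\Id$ is symmetric. The far-commutation $\alpha_k\alpha_m=\alpha_m\alpha_k$ for $|m-k|>1$ is immediate, since the two operations modify disjoint pairs of adjacent positions and hence commute as maps of tuples.

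The main obstacle is the braid relation $\alpha_k\alpha_{k+1}\alpha_k=\alpha_{k+1}\alpha_k\alpha_{k+1}$, which I would establish by an explicit three-step calculation on the only affected components $(a_k,a_{k+1},a_{k+2})=:(x,y,z)$. Writing $p=\langle x,y\rangle$, $q=\langle x,z\rangle$, $r=\langle y,z\rangle$ and using the vanishing lower entries $\langle y,x\rangle=\langle z,x\rangle=\langle z,y\rangle=0$, I would track the coefficients through the three successive mutations on each side. The left-hand side yields $\alpha_k\alpha_{k+1}\alpha_k(x,y,z)=\bigl(z,\ y-rz,\ x-py-(q-pr)z\bigr)$, while on the right-hand side the final Seifert coefficient simplifies via $\langle x-qz,\,y-rz\rangle=p-rq+qr=p$ (where $\langle z,y\rangle=0$ is used) to give the identical triple $\bigl(z,\ y-rz,\ x-py-(q-pr)z\bigr)$. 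The coincidence of these two expressions is exactly the braid relation; the decisive feature making it work is that upper-triangularity forces the cross terms in the Seifert form to collapse at each stage. Since the three verified identities $\beta_k=\alpha_k^{-1}$, the far-commutations, and the braid relation are precisely the defining relations of $\CB_n$, the assignment $k\mapsto\alpha_k$ defines the asserted representation of the braid group on the set of distinguished bases.
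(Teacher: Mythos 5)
Your proof is correct and complete. Note that the paper itself offers no argument for this Proposition --- it is quoted from \cite{book} as a known fact --- so there is no ``paper proof'' to compare against; your direct verification supplies one. All the computations check out: the norm computation $\langle c,c\rangle=1$ (equivalently $(c,c)=2$, which forces $c$ to be a root since in the $A_n$ root lattice every vector of squared length $2$ is a root --- this is the one standard fact you invoke that deserves an explicit sentence), the case analysis showing upper-triangularity is preserved, the cancellation $\langle a_{k+1},a_k-\lambda a_{k+1}\rangle=-\lambda$ giving $\beta_k=\alpha_k^{-1}$, and the three-step braid computation, whose key simplification $\langle x-qz,\,y-rz\rangle=p$ I have verified independently. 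Two small remarks. First, linear independence is cleaner to justify directly: the transition matrix from $(a_k,a_{k+1})$ to $(a_{k+1},a_k-\lambda a_{k+1})$ is unimodular, so the new tuple spans the same lattice; deducing it from ``invertibility of $\alpha_k$'' is slightly circular since you establish that invertibility only on distinguished bases. Second, be aware that the mutated vector $c$ may be a \emph{negative} root (e.g.\ $\alpha_1$ applied to $\{e_2,e_1+e_2\}$ in $A_2$ produces $-e_1$); this is consistent with the Proposition as stated, since the paper's definition of a distinguished basis allows roots of either sign and basis vectors are considered up to sign, but it is worth flagging because the rest of the paper works with positive roots only.
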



\begin{lemma} \label{arcsmut}
Suppose that $a$ and $b = b_{\In(b)}+\ldots+b_{\Ter(b)}$ are two positive roots such that $\left\langle b,a\right\rangle=0$. Let $c=a-\left\langle a,b\right\rangle b$. Then $c=a$ if $\left\langle a,b \right\rangle =0$,
otherwise $c$ can be found from one of the pictures:

\begin{tikzpicture}
\draw (3,0) arc (0:180:1cm);
\draw (2,0) arc (0:180:0.5cm);
\draw [dashed] (3,0) arc (0:180:0.5cm);
\draw[-,>=stealth] (0,0)--(10,0);
\draw (2.7,1) node {$a$};
\draw (1.7,0.7) node {$b$};
\draw (5.5,0.5) node {$\In(c)=\Ter(b)+1$};
\end{tikzpicture}

\begin{tikzpicture}
\draw (3,0) arc (0:180:1cm);
\draw (3,0) arc (0:180:0.5cm);
\draw [dashed] (2,0) arc (0:180:0.5cm);
\draw[-,>=stealth] (0,0)--(10,0);
\draw (2.7,1) node {$b$};
\draw (2.3,0.6) node {$a$};
\draw (5.5,0.5) node {$\In(c)=\In(b)$};
\end{tikzpicture}

\begin{tikzpicture}
\draw (3,0) arc (0:180:1cm);
\draw (4,0) arc (0:180:0.5cm);
\draw [dashed] (4,0) arc (0:180:1.5cm);
\draw[-,>=stealth] (0,0)--(10,0);
\draw (2.7,1) node {$a$};
\draw (3.5,0.7) node {$b$};
\draw (5.5,0.5) node {$\In(c)=\In(a)$};
\end{tikzpicture}

$d=b-\left\langle a,b\right\rangle a$ may be found by similar rules.
\end{lemma}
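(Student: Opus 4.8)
The plan is to reduce everything to the explicit values of the Seifert form recorded in Lemma \ref{L1}. Write $a=e_{\In(a)}+\dots+e_{\Ter(a)}$ and $b=e_{\In(b)}+\dots+e_{\Ter(b)}$. Since $a$ and $b$ are positive roots, Lemma \ref{L1} already forces $\left\langle a,b\right\rangle\in\{-1,0,1\}$, so there are only three values to treat. Note also that the hypothesis $\left\langle b,a\right\rangle=0$ automatically gives $a\neq b$, because $\left\langle v,v\right\rangle=1$ for every root $v$. If $\left\langle a,b\right\rangle=0$ then by definition $c=a-\left\langle a,b\right\rangle b=a$, which is the first assertion; from now on assume $\left\langle a,b\right\rangle\neq 0$.

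The heart of the argument is to show that the hypothesis $\left\langle b,a\right\rangle=0$, together with $\left\langle a,b\right\rangle\neq 0$, pins down the relative position of the two arcs to be exactly one of the three configurations drawn. I would do this by reading the three lines of Lemma \ref{L1} as incidence conditions between the endpoints of the arcs of $a$ and $b$ and then intersecting the locus $\left\langle a,b\right\rangle\neq 0$ with the locus $\left\langle b,a\right\rangle=0$. Concretely, $\left\langle a,b\right\rangle=1$ means $\In(b)\le \In(a)\le \Ter(b)\le \Ter(a)$, while imposing $\left\langle b,a\right\rangle=0$ rules out the genuinely linked sub-case (and the equality $a=b$ already excluded) and forces either equal left ends with $b$ inside $a$ (the first picture) or equal right ends with $a$ inside $b$ (the second picture). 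Likewise $\left\langle a,b\right\rangle=-1$ means $\In(a)\le \In(b)-1\le \Ter(a)<\Ter(b)$, and $\left\langle b,a\right\rangle=0$ then forces $\In(b)=\Ter(a)+1$, i.e. the right end of $a$ coincides with the left end of $b$ (the third picture); the competing sub-case is excluded precisely by $\left\langle b,a\right\rangle=0$. Carrying out this bookkeeping is routine, but it is the step that must be done carefully, since establishing the exhaustiveness of this trichotomy is the main obstacle.

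Once the configuration is identified, the value of $c$ follows by a one-line vector computation in each case. In the first picture $c=a-b=e_{\Ter(b)+1}+\dots+e_{\Ter(a)}$, so $\In(c)=\Ter(b)+1$; in the third picture $c=a+b=e_{\In(a)}+\dots+e_{\Ter(b)}$, so $\In(c)=\In(a)$; in the second picture $c=a-b$ comes out as the negative of the positive root $e_{\In(b)}+\dots+e_{\In(a)-1}$, and here I would invoke the sign-change corollary to regard $c$ as that positive root, giving $\In(c)=\In(b)$. This matches the three labels in the statement. Finally, the assertion for $d=b-\left\langle a,b\right\rangle a$ follows from the identical case analysis with the roles of the two computations interchanged, so no new work is required.
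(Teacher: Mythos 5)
Your proof is correct: the case analysis via Lemma \ref{L1} (splitting on $\left\langle a,b\right\rangle\in\{1,-1\}$, using $\left\langle b,a\right\rangle=0$ to force equal left ends, equal right ends, or adjacency, and then computing $c=a\mp b$ up to sign) is exactly the intended argument, which the paper omits as a routine verification. The only point worth making explicit in a write-up is the one you already flag in the second picture, namely that $c$ is a negative root and is replaced by its positive representative.
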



\begin{theorem}
Given a distinguished basis $A$, either $\alpha_i^{2}(A)=A$ or $\alpha_i^3(A)=A$.
\end{theorem}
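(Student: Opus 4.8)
The plan is to reduce everything to the local $2\times 2$ computation on the pair of vectors that $\alpha_i$ actually touches, and then to iterate while tracking how the Seifert form evolves. Since $\alpha_i$ fixes every $a_j$ with $j\neq i,i+1$, it suffices to follow the pair $(a,b):=(a_i,a_{i+1})$. Because $A$ is distinguished we have $\langle b,a\rangle=0$, so by the Proposition relating the Seifert and Cartan forms together with the Corollary $\langle v,v\rangle=1$ the quantity $p:=\langle a,b\rangle$ equals $(a,b)$; by Lemma \ref{L1} it lies in $\{-1,0,1\}$. Writing $\phi(x,y)=(y,\,x-\langle x,y\rangle y)$ for the local action of $\alpha_i$, the statement becomes a claim about the $\phi$-orbit of $(a,b)$, and the Proposition guarantees that every pair produced along the way is again part of a distinguished basis, so no degeneracy occurs.

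First I would dispose of the case $p=0$. Here $\phi(a,b)=(b,a)$, and since $\langle b,a\rangle=0$ as well, a second application of $\phi$ returns $(a,b)$; thus $\alpha_i^2(A)=A$, and the orbit has length exactly $2$ because $a\neq b$.

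The substantive case is $p=\pm1$. I would iterate $\phi$ three times, recomputing the form value on each new pair using $\langle v,v\rangle=1$, $\langle b,a\rangle=0$, and $p^2=1$. This gives, successively, $(b,\,a-pb)$ with form value $-p$, then $(a-pb,\,pa)$ with form value $p$, and finally $(pa,\,-pb)$. Since $(pa,-pb)$ differs from $(a,b)$ only by the signs of its two entries, and distinguished bases are considered up to sign (equivalently, one positivizes the roots to return to the parking-function picture), we conclude $\alpha_i^3(A)=A$. To see the orbit length is exactly $3$ and not $2$, note that $\alpha_i^2(A)$ has second entry $\pm a$, which cannot equal $\pm b$ since $a,b$ are distinct basis vectors; hence $\alpha_i^2(A)\neq A$.

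The one point requiring care—and the main obstacle—is the sign bookkeeping: the literal output $(pa,-pb)$ is not $(a,b)$ on the nose, so one must invoke the convention (Corollary) that sign changes preserve distinguished bases, or, if one prefers to work with genuine positive-root bases, verify that the positivization map $P$ sending each root to its positive representative satisfies $P\circ\alpha_i\circ P=P\circ\alpha_i$. This identity follows from the observation that $\alpha_i$ carries a sign-variant of $A$ to a sign-variant of $\alpha_i(A)$, which is immediate from the bilinearity $\langle \epsilon x,\delta y\rangle=\epsilon\delta\langle x,y\rangle$ for $\epsilon,\delta\in\{\pm1\}$ together with $\delta^2=1$. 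With this in hand one gets $(P\alpha_i)^3(A)=P\alpha_i^3(A)=A$, completing the argument and matching the orbit lengths $2$ and $3$ displayed in Figure \ref{fig:pf for a3}.
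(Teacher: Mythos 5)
Your proof is correct, and it follows the same basic reduction as the paper: everything localizes to the pair $(a_i,a_{i+1})$, and the two cases are governed by whether $\langle a_i,a_{i+1}\rangle$ vanishes. Where you differ is in how the nonzero case is settled. The paper identifies the sublattice generated by $a_i,a_{i+1}$ as a copy of $A_2$ and appeals to the fact that $\alpha_1^3=\Id$ on the three distinguished bases of $A_2$ (Figure \ref{fig:bases for a2}); you instead iterate the mutation formula three times by hand, obtaining $(a,b)\mapsto(b,a-pb)\mapsto(a-pb,pa)\mapsto(pa,-pb)$ with $p=\langle a,b\rangle=\pm1$. Your computations check out (including the updated form values $-p$ and $p$ at the intermediate stages, which confirm each pair is still correctly ordered), and your verification that $\alpha_i^2(A)\neq A$ in this case pins down the exact orbit length. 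The genuine added value of your write-up is the explicit treatment of signs: $\alpha_i^3$ returns $(a,b)$ only up to sign, so one must invoke the convention that bases are taken up to sign (equivalently, compose with the positivization map $P$ and check $P\alpha_iP=P\alpha_i$, which your bilinearity argument does correctly). The paper's proof, and its $A_2$ figure, use this convention silently --- e.g.\ $\alpha_1(e_2,e_{12})=(e_{12},-e_1)$ literally, displayed as $\{e_{12},e_1\}$ --- so your version is the more careful of the two, at the cost of being less structural.
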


\begin{proof}
Suppose that $A=(a_1,\ldots,a_n)$. Consider the sublattice generated by $a_i$ 
and $a_{i+1}$. If $\langle a_i,a_{i+1}\rangle=0$, then it has type $A_1\oplus A_1$, otherwise it has type $A_{2}$.
In the first case, $\alpha_i$ exchanges  $a_i$ and $a_{i+1}$ and has order 2. In the second case, it has order 3,
since $\alpha_1^3=\Id$ for $A_2$ (see Figure \ref{fig:bases for a2}).
\end{proof}

The braid group action on distinguished bases for $A_3$ is shown in Figure \ref{fig:bases for a3}.
The corresponding action on $\PF_3$ is shown in Figure \ref{fig:pf for a3}.

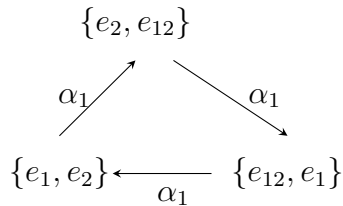
\begin{figure}[ht]
\begin{tikzpicture}
\draw (0,0) node {$\{e_1,e_2\}$};
\draw (1,2) node {$\{e_2,e_{12}\}$};
\draw (3,0) node {$\{e_{12},e_1\}$};
\draw [->,>=stealth] (0,0.5)--(1,1.5);
\draw [->,>=stealth] (1.5,1.5)--(3,0.5);
\draw [->,>=stealth] (2,0)--(0.7,0);
\draw (0.2,1) node {$\alpha_1$};
\draw (2.7,1) node {$\alpha_1$};
\draw (1.5,-0.3) node {$\alpha_1$};
\end{tikzpicture}
\caption{Braid group action for $A_2$}
\label{fig:bases for a2}
\end{figure}

\begin{figure}[ht]
\begin{tikzpicture}
\draw (0,0) node {$\{e_{13},e_2,e_{12}\}$};
\draw (3,0) node {$\{e_{13},e_{12},e_{1}\}$};
\draw (6,0) node {$\{e_{3},e_{13},e_{1}\}$};
\draw (9,0) node {$\{e_3,e_1,e_{23}\}$};
\draw (0,2) node {$\{e_{13},e_1,e_2\}$};
\draw (0,4) node {$\{e_{1},e_{23},e_{2}\}$};
\draw (0,6) node {$\{e_{1},e_3,e_{23}\}$};
\draw (3,6) node {$\{e_{1},e_2,e_{3}\}$};
\draw (6,6) node {$\{e_{2},e_{12},e_{3}\}$};
\draw (9,6) node {$\{e_{2},e_{13},e_{12}\}$};
\draw (9,4) node {$\{e_{2},e_3,e_{13}\}$};
\draw (9,2) node {$\{e_{3},e_{23},e_{13}\}$};
\draw (4.5,-1) node {$\{e_{12},e_3,e_{1}\}$};
\draw (4.5,7) node {$\{e_{12},e_1,e_{3}\}$};
\draw (2,3) node {$\{e_{23},e_{13},e_{2}\}$};
\draw (7,3) node {$\{e_{23},e_2,e_{13}\}$};

\draw [->,>=stealth,dashed] (1.2,0)--(1.8,0);
\draw [<-,>=stealth] (4.2,0)--(4.8,0);
\draw [->,>=stealth,dashed] (7.2,0)--(7.8,0);
\draw [<-,>=stealth,dashed] (1.2,6)--(1.8,6);
\draw [->,>=stealth] (4.2,6)--(4.8,6);
\draw [<-,>=stealth,dashed] (7.2,6)--(7.8,6);
\draw [<-,>=stealth,dashed] (0,0.2)--(0,1.8);
\draw [->,>=stealth] (0,2.2)--(0,3.8);
\draw [<-,>=stealth,dashed] (0,4.2)--(0,5.8);
\draw [->,>=stealth,dashed] (9,0.2)--(9,1.8);
\draw [<-,>=stealth] (9,2.2)--(9,3.8);
\draw [->,>=stealth,dashed] (9,4.2)--(9,5.8);
\draw [<-,>=stealth,dashed] (0.5,1.8)--(3,0.2);
\draw [->,>=stealth,dashed] (0.5,4.2)--(3,5.8);
\draw [->,>=stealth,dashed] (8.5,1.8)--(6,0.2);
\draw [<-,>=stealth,dashed] (8.5,4.2)--(6,5.8);
\draw [<->,>=stealth] (0.5,0.2)--(8.5,5.8);
\draw [<->,>=stealth] (0.5,5.8)--(8.5,0.2);
\draw [<->,>=stealth,dashed] (4.5,-0.8)--(4.5,6.8);
\draw [<->,>=stealth,dashed] (3,3)--(6,3);
\draw [->,>=stealth] (3,-0.2)--(4,-0.8);
\draw [->,>=stealth] (5,-0.8)--(6,-0.2);
\draw [<-,>=stealth] (3,6.2)--(4,6.8);
\draw [<-,>=stealth] (5,6.8)--(6,6.2);
\draw [<-,>=stealth] (0.5,2.2)--(1.5,2.8);
\draw [<-,>=stealth] (1.5,3.2)--(0.5,3.8);
\draw [->,>=stealth] (8.5,2.2)--(7.5,2.8);
\draw [->,>=stealth] (7.5,3.2)--(8.5,3.8);
\end{tikzpicture}
\caption{Braid group action for $A_3$: $\alpha_1$ is solid, $\alpha_2$ is dashed}
\label{fig:bases for a3}
\end{figure}
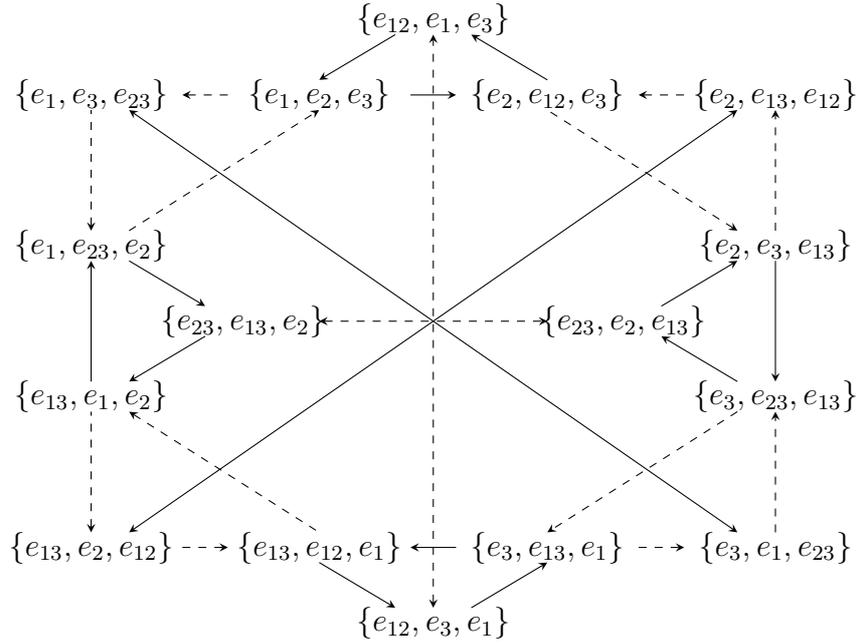

\begin{theorem}
\label{mut}
Consider a parking function diagram $D.$ The action of the braid group is determined by the action of generators $\alpha_k$ and $\beta_k.$ There are the following cases:
\begin{itemize}
\item[1)]
The segment $P_{k+1} P_k$ goes strictly SW--NE and all its common points with $D(E)$ are $P_l,$ where $l \leq (k+1).$ Then 
\begin{itemize}
\item{} $\alpha_k$ exchanges the numbers $k$ and $(k+1)$ ;
\item{} $\beta_k$ replaces the row of $D$ containing $P_k$ with a row of length being equal to the $x$-coordinate of $P_{k+1},$ inserting it directly below the row containing $P_{k+1}.$ All rows between the old and the new ones are shifted together with their numbers by one position above.
\end{itemize}
\item[2)]
The segment $P_k P_{k+1}$ goes strictly SW--NE and all its common points with $D(E)$ are $P_l,$ where $l \leq (k+1).$ Then
\begin{itemize}
\item{}$\alpha_k$ replaces the row of $D$ containing $P_{k+1}$ with a row of length being equal to the $x$-coordinate of $P_{k},$ inserting it directly below the row containing $P_{k}.$  All rows between the old and the new ones are shifted together with their numbers by one position above.
\item{}$\beta_k$ exchanges the numbers $k$ and $(k+1)$.
\end{itemize}
\item[3)]
$P_k$ and $P_{k+1}$ have the same $x$-coordinate. Start from $P_{k+1}$ and go strictly north-east until we meet first $P_l$ s.t. $l > k$ or encounter the Dyck path or the $x$-axis. Denote the point where we stop by $Q.$ Then
\begin{itemize}
\item{}$\alpha_k$ replaces the row of $D$ containing $P_{k}$ with a row of length being equal to the $x$-coordinate of $Q$ inserting it directly below the row containing $Q$ (or the x-axis, if that contains $Q$). All rows between the old and the new ones are shifted together with their numbers by one position below.
\item{}$\beta_k$ replaces a row of $D$ containing $P_{k+1}$ with a row of length being equal to the $x$-coordinate of $Q$ putting it directly below the row containing $Q$ (or the x-axis, if that contains $Q$). All rows between the old and the new ones are shifted together with their numbers by one position below.
\end{itemize}
\item[4)]
In the other cases both $\alpha_k$ and $\beta_k$ exchange the numbers $k$ and $(k+1)$.
\end{itemize}
\end{theorem}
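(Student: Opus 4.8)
\emph{Strategy.} The braid action on distinguished bases is already given by the mutation formulas for $\alpha_k,\beta_k$, and Theorem \ref{Tmain} together with Theorem \ref{T2} identifies a distinguished basis $A$ with the parking function diagram $D(A)$ of $\In(A)$. The plan is therefore to transport the mutation through this bijection. The decisive simplification is that $\alpha_k$ and $\beta_k$ change only the two neighbouring roots $a_k,a_{k+1}$: $\alpha_k$ replaces $a_k$ by $c=a_k-\langle a_k,a_{k+1}\rangle a_{k+1}$ and moves $a_{k+1}$ into position $k$, while $\beta_k$ replaces $a_{k+1}$ by $d=a_{k+1}-\langle a_k,a_{k+1}\rangle a_k$ and keeps $a_k$. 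So I only need to understand how these two roots, and hence the two rows of $D(A)$ carrying the labels $k$ and $k+1$, are affected.

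\emph{Local analysis via Lemma \ref{arcsmut}.} First I would dispose of the orthogonal case $\langle a_k,a_{k+1}\rangle=0$: here both mutations merely transpose $a_k$ and $a_{k+1}$, no support changes, and the effect on $D(A)$ is just the exchange of the labels $k$ and $k+1$; this is case (4). When $\langle a_k,a_{k+1}\rangle\neq 0$ we have $\langle a_{k+1},a_k\rangle=0$ because $A$ is distinguished, and Corollary \ref{C2} forces the supports of $a_k$ and $a_{k+1}$ to be nested or to meet at one endpoint; these are exactly the three configurations of Lemma \ref{arcsmut}. For each of them I will compute $\In(c)$ and $\In(d)$ from the pictures and compare the multiset of initial points of $\alpha_k(A)$ (resp.\ $\beta_k(A)$) with that of $A$. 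Since the multiset of initial points is precisely the multiset of row lengths of the diagram, this comparison decides the shape: if the multiset is unchanged the operation is a pure exchange of the labels $k$ and $k+1$, and if it changes then one row must shrink or grow, which is the surgery. Reading off the three pictures, in the configuration with coinciding initial points (shared left end, i.e.\ $P_k$ and $P_{k+1}$ in the same column) \emph{both} $\alpha_k$ and $\beta_k$ move a row, giving case (3); in the two remaining configurations exactly one of $\alpha_k,\beta_k$ is an exchange and the other a surgery, and matching these against the two orientations of the north-east segment through $P_k$ and $P_{k+1}$ produces cases (1) and (2).

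\emph{Reconstructing the surgered diagram.} To certify that the stated surgery is correct, I propose to apply it to $D(A)$, call the outcome $D'$, and then run the reconstruction of Theorem \ref{T2} on $D'$, checking that the basis obtained is precisely $\alpha_k(A)$ (respectively $\beta_k(A)$). The horizontal data are immediate: the length of the modified row equals the initial point of $c$ (resp.\ $d$) prescribed by Lemma \ref{arcsmut}, and the terminal point produced by the north-east procedure on $D'$ is the correct one because only one row has moved. Here Lemma \ref{chain} is used to see exactly which cells of the support get covered, and in case (3) it identifies the stopping point $Q$ with the new terminal point.

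\emph{Main obstacle.} The genuinely delicate part is the vertical bookkeeping of the surgery: one must show that the modified row has to be reinserted \emph{directly below} the specified row (the one carrying $P_{k+1}$, or the one containing $Q$) and that every intervening row, together with its label, shifts by exactly one position in the stated direction. This placement is not a matter of choice; it is forced by the two defining properties of a parking function diagram, namely that it be a Young diagram and that the labels increase up each column, together with the ordering of the mutated basis. Verifying that the described insertion is the unique position compatible with these constraints, in each of cases (1)--(3), is where the real work lies; once it is done, the equality of the reconstructed basis with $\alpha_k(A)$ or $\beta_k(A)$ follows from Theorem \ref{T2} and the formulas of Lemma \ref{arcsmut}.
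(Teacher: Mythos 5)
Your proposal is correct and follows essentially the same route as the paper, which simply notes that the theorem follows from Lemma \ref{arcsmut} (the local effect of a mutation on the two arcs for $a_k,a_{k+1}$) combined with Theorem \ref{T2} (the reconstruction of the diagram from initial points); your case split by the relative position of the two arcs and the transfer of $\In(c)$, $\In(d)$ to row lengths is exactly that argument, spelled out in more detail than the paper itself provides.
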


Theorem \ref{mut} immediately follows from Lemma \ref{arcsmut} and Theorem \ref{T2}.
An example of the action of $\alpha_i$ and $\beta_i$ on parking function diagrams is shown in
Figure \ref{fig:alpha 6}.



  
    


\begin{figure}[ht]
\begin{tikzpicture}[scale=0.6, line width=0.6pt]
  \draw [<->,thick] (1,16) node (yaxis) [above] {$y$}
     |- (10,14) node (xaxis) [right] {$x$};
  \draw [thick] (1,5) -- (1,14);
  \draw [thick] (0,14) -- (1,14);
  \draw (1,13) -- (8,13) --node [right]{\bf 2} (8,14);
  \draw (1,12) -- (7,12) --node [right]{\bf 7}(7,13) -- (7,14);
  \draw (1,11) -- (6,11) --node[right]{\bf 3}(6,12) -- (6,14);
  \draw (1,10) -- (5,10) --node[right]{\bf 5}(5,11) -- (5,14);
  \draw (1,9) -- (4,9) --node[right]{\bf 4}(4,10) -- (4,14);
  \draw (1,8) -- (2,8) --node[right]{\bf 1}(2,9) -- (2,14);
  \draw (1,7) --node[right]{\bf 8}(1,8);
  \draw (1,6) --node[right]{\bf 6}(1,7);
  \draw (3,9) -- (3,14);

\draw (9,14) arc (0:180:1cm);
\draw (9,14) arc (0:180:0.5cm);
\draw (7,14) arc (0:180:0.5cm);
\draw (7,14) arc (0:180:1cm);
\draw (7,14) arc (0:180:3cm);
\draw (5,14) arc (0:180:0.5cm);
\draw (3,14) arc (0:180:0.5cm);
\draw (3,14) arc (0:180:1cm);
\draw (1.9,14.5) node {1};
\draw (7.9,14.5) node {2};
\draw (5.9,14.5) node {3};
\draw (4.5,14.9) node {4};
\draw (6,15.5) node {5};
\draw (4,17.5) node {6};
\draw (8,15.5) node {7};
\draw (2.5,15.4) node {8};

  \draw [<->,thick] (1,-2) node (yaxis) [above] {$y$}
     |- (10,-4) node (xaxis) [right] {$x$};
  \draw [thick] (1,-13) -- (1,-4);
  \draw [thick] (0,-4) -- (1,-4);
  \draw (1,-5) -- (8,-5) --node [right]{\bf 2} (8,-4);
  \draw (1,-6) -- (7,-6) --node [right]{\bf 6}(7,-5) -- (7,-4);
  \draw (1,-7) -- (6,-7) --node[right]{\bf 3}(6,-6) -- (6,-4);
  \draw (1,-8) -- (5,-8) --node[right]{\bf 5}(5,-7) -- (5,-4);
  \draw (1,-9) -- (4,-9) --node[right]{\bf 4}(4,-8) -- (4,-4);
  \draw (1,-10) -- (2,-10) --node[right]{\bf 1}(2,-9) -- (2,-4);
  \draw (1,-11) --node[right]{\bf 8}(1,-10);
  \draw (1,-12) --node[right]{\bf 7}(1,-11);
  \draw (3,-9) -- (3,-4);

\draw (9,-4) arc (0:180:0.5cm);
\draw (9,-4) arc (0:180:1cm);
\draw (9,-4) arc (0:180:4cm);
\draw (7,-4) arc (0:180:0.5cm);
\draw (7,-4) arc (0:180:1cm);
\draw (5,-4) arc (0:180:0.5cm);
\draw (3,-4) arc (0:180:0.5cm);
\draw (3,-4) arc (0:180:1cm);
\draw (1.9,-3.5) node {1};
\draw (7.9,-3.5) node {2};
\draw (5.9,-3.5) node {3};
\draw (4.5,-3.1) node {4};
\draw (6,-2.5) node {5};
\draw (8,-2.5) node {6};
\draw (4.5,0.5) node {7};
\draw (2.5,-2.6) node {8};

    \draw [<->,thick] (16,7) node (yaxis) [above] {$y$}
     |- (25,5) node (xaxis) [right] {$x$};
  \draw [thick] (16,-4) -- (16,5);
  \draw [thick] (15,5) -- (16,5);
  \draw (16,4) -- (23,4) --node [right]{\bf 2} (23,5);
  \draw (16,3) -- (21,3) --node[right]{\bf 3}(21,4) -- (21,5);
  \draw (16,2) -- (20,2) --node[right]{\bf 5}(20,3) -- (20,5);
  \draw (16,1) -- (19,1) --node[right]{\bf 4}(19,2) -- (19,5);
  \draw (16,0) -- (17,0) --node[right]{\bf 1}(17,1) -- (17,5);
  \draw (16,-1) --node[right]{\bf 8}(16,0);
  \draw (16,-2) --node[right]{\bf 7}(16,-1);
  \draw (16,-3) --node[right]{\bf 6}(16,-2);
  \draw (18,1) -- (18,5);
  \draw (22,4) -- (22,5);

\draw (24,5) arc (0:180:0.5cm);
\draw (24,5) arc (0:180:4cm);
\draw (22,5) arc (0:180:0.5cm);
\draw (22,5) arc (0:180:1cm);
\draw (22,5) arc (0:180:3cm);
\draw (20,5) arc (0:180:0.5cm);
\draw (18,5) arc (0:180:0.5cm);
\draw (18,5) arc (0:180:1cm);
\draw (16.9,5.5) node {1};
\draw (22.9,5.5) node {2};
\draw (20.9,5.5) node {3};
\draw (19.5,5.9) node {4};
\draw (21,6.5) node {5};
\draw (19,9.5) node {6};
\draw (21.2,7.5) node {7};
\draw (17.5,6.4) node {8};

\draw[->,>=stealth,thick] (4,5)--node[left]{$\beta_6$}(4,2);
\draw[->,>=stealth,thick] (5,2)--node[right]{$\alpha_6$}(5,5);
\draw[->,>=stealth,thick] (13,1)--node[above]{$\alpha_6$}(10,-1);
\draw[->,>=stealth,thick] (10,-2)--node[below]{$\beta_6$}(13,0);
\draw[->,>=stealth,thick] (13,6)--node[above]{$\beta_6$}(10,8);
\draw[->,>=stealth,thick] (10,7)--node[below]{$\alpha_6$}(13,5);

\end{tikzpicture}
\caption{Action of $\alpha_6$ and $\beta_6$ on parking function diagrams}
\label{fig:alpha 6}
\end{figure}
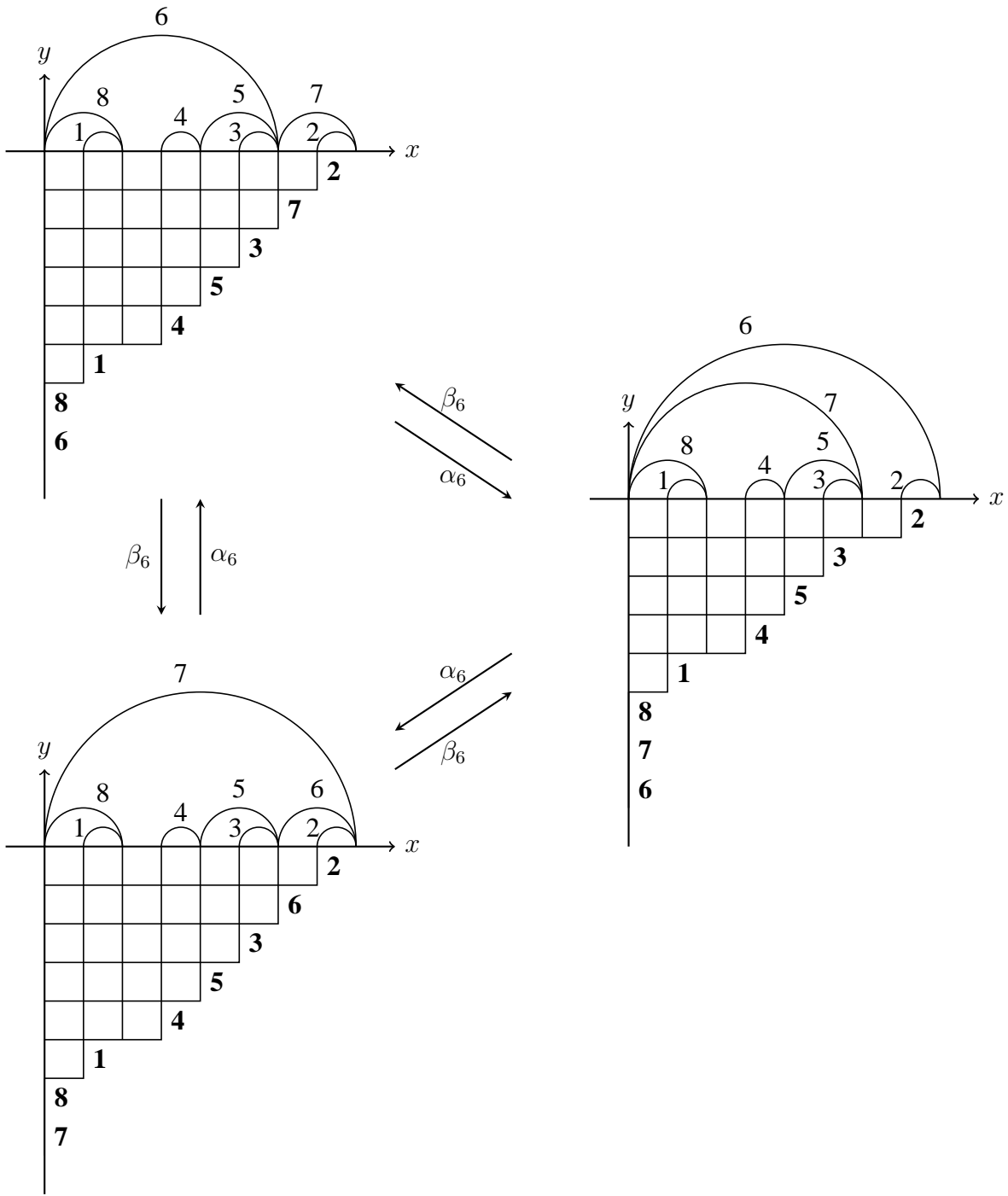

Now we describe a relation of the action of $\alpha_k$ and $\beta_k$ on a diagram of a non-decreasing parking function to the following operations on Young diagrams which were introduced in \cite{Gor}.

\begin{definition} 
Let $M = (\mu_1,\mu_2, \ldots)$ and  $N$ be Young diagrams. We will say that $N$
is obtained from $M$ by a \textit{flip in row $k$}, if we can obtain it from $M$ by throwing out row number $k$  (it can have length $0$) and insertion of another row of length $l$ 
where $l$ is defined by the following rule:

Start from the point $(\mu_k, -k)$ of diagram $M$ and go along the line $x - y = \mu_k + k.$ If $k$-th row is longer than $(k+1)$-st, we should go in SW direction, if their lengths are the same -- in NE direction. Stop at the first moment, when we touch the boundary of $M$ or the coordinate line. The $x$-coordinate of this point will be $l.$

\end{definition}

These flips are induced by flips of triangulations of the $(n+2)-$gon  via bijection to $\mathbb{Y}_n.$ They also correspond (\cite{Gor}) to mutations in cluster algebras of type $A_{n-1}.$ 

Denote by $Y(D)$ the Young diagram corresponding to a parking function diagram $D.$

\begin{theorem}
Let $D$ be the diagram of a non-decreasing parking function on $n$ elements. Then $Y(D) \in \mathbb{Y}_n$ and  $Y(\alpha_k)$ either equals $Y(D)$ or it is obtained from $Y(D)$ by a single flip in some row. The same holds for $Y(\beta_k).$
\end{theorem}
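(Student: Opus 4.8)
The plan is to derive the statement from the explicit description of the braid action on parking function diagrams in Theorem \ref{mut}, using that for a non-decreasing parking function $f$ the diagram $D$ is itself a Young diagram in the triangle. First I would record that $Y(D)\in\mathbb{Y}_n$: by the proposition identifying non-decreasing parking functions with Dyck paths, the labels on $D$ coincide with their $y$-coordinates, so the SE corners $P_1,\ldots,P_n$ climb monotonically along the boundary and $D$ is a Young diagram lying under the line $y=x-n$. Its shape $Y(D)$ is then the partition whose parts are the row lengths $f(k)-1$; in particular $Y(D)$ depends only on the multiset $\{f(1),\ldots,f(n)\}$.

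Next I would isolate which of the four cases of Theorem \ref{mut} can occur. Since $f$ is non-decreasing, $P_{k+1}$ is never strictly SW of $P_k$, so Case 1 never arises; if $f(k)=f(k+1)$ we are in Case 3, and if $f(k)<f(k+1)$ we are in Case 2 or Case 4. Whenever the prescribed effect is ``exchange the numbers $k$ and $(k+1)$'' --- this is Case 4 for $\alpha_k$, and Cases 2 and 4 for $\beta_k$ --- only the labels are permuted, the multiset of row lengths is unchanged, and hence $Y(\alpha_k(D))=Y(D)$ (respectively $Y(\beta_k(D))=Y(D)$). This yields the first alternative of the theorem.

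It remains to treat the row-replacing subcases: Cases 2 and 3 for $\alpha_k$, and Case 3 for $\beta_k$. Each of these deletes a single row and inserts one new row whose length is read off as the $x$-coordinate of a stopping point reached by travelling along a slope-one diagonal until meeting the boundary (a higher-indexed corner $P_l$, the Dyck path, or an axis). I would match this with the flip of \cite{Gor} under the order-reversing identification of PF labels (indexed upward by height) with partition rows (indexed downward by decreasing length). In the equal case $f(k)=f(k+1)$ (Case 3) the construction travels NE from $P_{k+1}$, which corresponds exactly to the NE travel that the flip prescribes for equal consecutive parts $\mu_j=\mu_{j+1}$; in the strict case $f(k)<f(k+1)$ (Case 2, $\alpha_k$) the row containing $P_{k+1}$ is shortened to the $x$-coordinate of $P_k$, which corresponds to the SW travel prescribed for a strict descent $\mu_j>\mu_{j+1}$. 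In each instance the diagonal, its starting corner and the stopping rule agree, so the inserted length equals the flip length $l$. Since a flip also removes one part and re-sorts, the two resulting partitions coincide and $Y(\alpha_k(D))$ (respectively $Y(\beta_k(D))$) is a single flip of $Y(D)$.

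I expect the main obstacle to be exactly this matching of geometries: one must reconcile the two competing orientations --- the PF diagram grows upward with its longest rows on top, whereas the flip of \cite{Gor} is defined in a downward, partition-indexed frame --- and verify that the rule ``stop at the first $P_l$ with $l>k$, or at the Dyck path or an axis'' of Theorem \ref{mut} is the same as ``stop on touching the boundary of $M$ or a coordinate line'' in the definition of a flip. Once this dictionary is fixed, checking that both the travel direction (governed by whether consecutive rows are equal or strictly decreasing) and the terminal $x$-coordinate agree is a direct, if bookkeeping-heavy, verification.
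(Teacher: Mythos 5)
Your proposal is correct and takes essentially the same route as the paper: the paper's proof consists of observing that for a non-decreasing parking function the $y$-coordinate of $P_a$ is determined by $a$ and then invoking Theorem \ref{mut}, leaving the case-by-case matching with the flip of \cite{Gor} implicit. You spell out exactly that matching (which cases of Theorem \ref{mut} occur, when the shape is unchanged, and why the row-replacement subcases coincide with flips under the order-reversing row identification), so your write-up is a more detailed version of the same argument.
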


\begin{proof}
Remark that for non-decreasing parking function for every number $a$ the $y$-coordinate of $P_a$ equals to $(a - n).$
It remains to apply Theorem \ref{mut}.
\end{proof}

\section{Quiver representations}
\label{sec:quiver}


Let us briefly repeat the main notions of quiver representation theory, following
\cite{keller} and \cite{kirillov}.

\begin{definition}
A {\it quiver} is an oriented graph. It is {\it finite}, if it has finite number of vertices and finite number of arrows. Let $Q$ be a finite quiver without oriented cycles and let $k$ be an algebraically closed field. A {\it representation} of $Q$ is the datum $V$ of a finite-dimensional vector space $V_i$ over $k$ for each vertex $i$ of $Q$
and a linear map $V_\alpha : V_i \rightarrow V_j$ for each arrow $\alpha : i \rightarrow j$ of $Q.$
 
The \textit{dimension vector} of a representation $V$ is the sequence $\underline{\dim} V$ of dimensions $\dim V_i, i \in Q_0.$ A representation $V$ is \textit{indecomposable} if it is non zero and in each decomposition $V = V' \oplus V''$ we have $V' = 0$ or $V'' = 0.$
A quiver is called \textit{representation-finite} if it has only finitely many isomorphism classes of indecomposable representations.
\end{definition}

It is a well-known fact that representations of $Q$ over $k$ form an abelian category which is denoted by $\Rep_k Q.$ Moreover, this category is {\it hereditary}, that is
$$\Ext^i(V,W) = 0, \quad \forall i>1, V, W \in \Rep_k Q.$$

\begin{theorem} \label{Gab} {\normalfont{(Gabriel \cite{Gab})}}. Let $Q$ be a connected quiver and assume that $k$ is algebraically closed. $Q$ is representation-finite if and only if the underlying graph of $Q$ is a simply laced Dynkin diagram $\Delta.$ In this case, the map taking a representation with dimension vector $(d_i)$ to the root $\sum d_i \alpha_i$ of the root system associated with $\Delta$ yields a bijection from the set of isomorphism classes of indecomposable representations to the set of positive roots.
\end{theorem}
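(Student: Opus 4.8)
The plan is to prove both implications of Gabriel's theorem through the \emph{Tits quadratic form} together with the geometry of the representation variety, and to realize the bijection with positive roots via reflection functors. First I would attach to $Q$ the Euler bilinear form on $K_0(\Rep_k Q)\cong\mathbb{Z}^{Q_0}$,
$$\langle d,e\rangle=\sum_{i\in Q_0}d_i e_i-\sum_{\alpha:i\to j}d_i e_j,$$
and recall that because $\Rep_k Q$ is hereditary one has, for all representations $V,W$,
$$\langle \underline{\dim}V,\underline{\dim}W\rangle=\dim\Hom(V,W)-\dim\Ext^1(V,W).$$
Its symmetrization is the Cartan form of the underlying graph, and the associated quadratic form $q_Q(d)=\langle d,d\rangle$ depends only on that graph, not on the orientation. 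The combinatorial input I would quote is the classification of nonnegative integral quadratic forms: $q_Q$ is positive definite precisely when the underlying graph is a simply laced Dynkin diagram $\Delta$, is positive semidefinite with a nontrivial radical exactly for extended (affine) Dynkin graphs, and is indefinite otherwise.

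Next I would bring in the representation variety $\Rep_k(Q,d)=\bigoplus_{\alpha:i\to j}\Hom(k^{d_i},k^{d_j})$, an affine space on which $\mathrm{GL}_d=\prod_i\mathrm{GL}_{d_i}$ acts with orbits exactly the isomorphism classes of representations of dimension vector $d$. Since the stabilizer of $V$ is the open subgroup $\mathrm{Aut}(V)\subset\operatorname{End}(V)$, the orbit $\mathcal O_V$ has dimension $\dim\mathrm{GL}_d-\dim\operatorname{End}(V)$, while $\dim\mathrm{GL}_d-\dim\Rep_k(Q,d)=q_Q(d)$. Combining these with the homological identity yields the two facts I will use repeatedly:
$$q_Q(d)=\dim\operatorname{End}(V)-\dim\Ext^1(V,V),\qquad \operatorname{codim}\mathcal O_V=\dim\Ext^1(V,V).$$
In particular an orbit is open in the irreducible variety $\Rep_k(Q,d)$ if and only if the corresponding representation has no self-extensions.

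Assume now that the underlying graph is Dynkin, so $q_Q$ is positive definite and $q_Q(d)\ge 1$ for every $0\ne d\ge 0$. If $V$ is indecomposable then, as $k$ is algebraically closed, $\operatorname{End}(V)$ is local with residue field $k$ by the Krull--Schmidt theorem. The heart of this direction is to show $\Ext^1(V,V)=0$: one checks that the orbit of an indecomposable must be dense in $\Rep_k(Q,d)$, for otherwise one could produce an infinite family of pairwise non-isomorphic indecomposables of the same dimension vector, contradicting the positive-definite bound $\dim\Ext^1(V,V)=\dim\operatorname{End}(V)-q_Q(d)\le \dim\operatorname{End}(V)-1$. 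Once the indecomposable orbit is open one gets $\Ext^1(V,V)=0$ and hence $\operatorname{End}(V)=k$ and $q_Q(\underline{\dim}V)=1$, so $\underline{\dim}V$ is a positive root; moreover two indecomposables with the same dimension vector both have the dense orbit in the irreducible $\Rep_k(Q,d)$ and are therefore isomorphic. Surjectivity onto the positive roots I would obtain from the Bernstein--Gelfand--Ponomarev reflection functors $S_i^{\pm}$ at sinks and sources: they induce on dimension vectors the simple reflections $s_i$ of the Weyl group (off the simple $S_i$), so applying the Coxeter functor to the simple representations sweeps out an indecomposable for each positive root. As $\Delta$ has only finitely many positive roots, $Q$ is representation-finite.

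For the converse I would argue by contraposition, and this is the step I expect to be the main obstacle. If the underlying graph is not Dynkin, the classification above gives a nonzero $d\ge 0$ with $q_Q(d)\le 0$, whence every $V$ of dimension vector $d$ satisfies $\dim\Ext^1(V,V)\ge \dim\operatorname{End}(V)-q_Q(d)\ge 1$. Turning this inequality into an actual \emph{infinite} family of non-isomorphic indecomposables is the delicate point: for the minimal non-Dynkin (extended Dynkin) case one uses an isotropic radical vector $\delta$, for which $\Rep_k(Q,\delta)$ carries a positive-dimensional moduli of indecomposables (the Kronecker quiver already exhibits the $\mathbb{P}^1$ of indecomposables of dimension vector $(1,1)$), and one reduces the general indefinite case to such a subgraph. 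Producing these families rigorously, rather than merely counting dimensions, is the technical core of Gabriel's theorem and is where the reflection-functor or Hall-algebra machinery of the previous step does the real work; granting it, representation-infiniteness follows and the theorem is proved.
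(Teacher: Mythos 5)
The paper does not prove this statement: it is Gabriel's theorem, quoted as background and attributed to \cite{Gab}, so there is no internal proof to compare against. Your sketch follows the standard Tits-form/representation-variety route, and the numerical bookkeeping in it is correct: the identity $q_Q(d)=\dim\operatorname{End}(V)-\dim\Ext^1(V,V)$ and the formula $\operatorname{codim}\mathcal O_V=\dim\Ext^1(V,V)$ are exactly the right tools. But the argument as written has a genuine gap at its central step, and it also misjudges where the difficulty lies.

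The gap is in the Dynkin $\Rightarrow$ representation-finite direction, at the claim that the orbit of an indecomposable must be dense. ``Otherwise one could produce an infinite family of pairwise non-isomorphic indecomposables of the same dimension vector'' is not an argument: non-density of $\mathcal O_V$ does not by itself produce such a family, and even if it did, an infinite family would not contradict the inequality $\dim\Ext^1(V,V)\le\dim\operatorname{End}(V)-1$, which is perfectly consistent with $\Ext^1(V,V)\ne 0$ when $\operatorname{End}(V)$ is large. The rigorous versions of this step either (a) run the entire Dynkin direction through the Bernstein--Gelfand--Ponomarev reflection functors --- since $q_Q$ is positive definite the Coxeter transformation has no nonzero fixed vectors, so iterating reflection functors eventually annihilates any indecomposable, exhibiting it as the image of a simple and giving existence, uniqueness and the root property all at once --- or (b) use an induction on orbits of maximal dimension (if $\mathcal O_W$ is maximal and $W=W'\oplus W''$ then $\Ext^1(W'',W')=0$, etc.). You invoke reflection functors only for surjectivity, so your proof of $\Ext^1(V,V)=0$ for indecomposable $V$ is missing. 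Conversely, the direction you call the ``technical core'' is actually the soft one: over the (infinite) field $k$, if there were only finitely many isomorphism classes of dimension vector $d$, the irreducible variety $\Rep_k(Q,d)$ would be a finite union of orbits and hence contain a dense orbit $\mathcal O_V$, forcing $q_Q(d)=\dim\operatorname{End}(V)\ge 1$; together with the elementary inequality $q_Q(d)\ge q_Q(|d|)$ (the off-diagonal coefficients of $q_Q$ are nonpositive) this yields positive definiteness of $q_Q$ and hence that the graph is Dynkin. No explicit $\mathbb P^1$-families or reduction to extended Dynkin subgraphs are needed for that implication.
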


\begin{definition}
The {\it Euler form} is the non-symmetric form on $K_0(\Rep_k  Q)$ defined as
$$\left\langle E,F\right\rangle=\dim \Hom(E,F)-\dim \Ext^{1}(E,F).$$
Sometimes it is also called the {\it Ringel form}.
\end{definition}

\begin{proposition}
Suppose that $Q$ satisfies the conditions of Theorem \ref{Gab}, and $E_{\alpha}$, $E_{\beta}$ are two indecomposable representations corresponding to roots $\alpha$ and $\beta$. Then
$$\left\langle E_{\alpha},E_{\beta}\right\rangle+\left\langle E_{\beta},E_{\alpha}\right\rangle=(\alpha,\beta).$$
\end{proposition}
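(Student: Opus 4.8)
The plan is to use bilinearity to reduce the identity to the classes of the simple representations, and then to evaluate both sides directly on those classes.

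First I would observe that the Euler form descends to a well-defined bilinear form on the Grothendieck group $K_0(\Rep_k Q)$. Since $\Rep_k Q$ is hereditary, $\Ext^i(E,F)=0$ for all $i>1$, and both $\dim\Hom(-,-)$ and $\dim\Ext^1(-,-)$ are additive on short exact sequences in each argument, via the long exact sequence relating $\Hom$ and $\Ext^1$. Hence $\left\langle E,F\right\rangle=\dim\Hom(E,F)-\dim\Ext^1(E,F)$ depends only on the classes $[E],[F]\in K_0(\Rep_k Q)$ and is bilinear in them. By Gabriel's theorem (Theorem \ref{Gab}), the classes of the simple representations $S_1,\ldots,S_n$, one for each vertex, form a $\mathbb{Z}$-basis of $K_0(\Rep_k Q)$ corresponding to the simple roots $e_1,\ldots,e_n$, and the class of $E_\alpha$ is precisely $\alpha$ expressed in this basis.

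Both sides of the claimed equality are $\mathbb{Z}$-bilinear in $(\alpha,\beta)$: the left side by the previous paragraph, the right side because the Cartan scalar product is bilinear. It therefore suffices to verify the identity for $\alpha=e_i$ and $\beta=e_j$, that is, on pairs of simple representations. For these I would compute the Euler form explicitly. Since $Q$ has no oriented cycles it has no loops, so $\Ext^1(S_i,S_i)=0$, while $\Hom(S_i,S_i)=k$ by Schur's lemma; hence $\left\langle S_i,S_i\right\rangle=1$. For $i\neq j$ one has $\Hom(S_i,S_j)=0$, and the standard description of extensions between simple modules over a path algebra gives $\dim\Ext^1(S_i,S_j)$ equal to the number of arrows $i\to j$ in $Q$, so $\left\langle S_i,S_j\right\rangle=-\#\{\text{arrows }i\to j\}$.

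Symmetrizing, for $i=j$ we obtain $\left\langle S_i,S_i\right\rangle+\left\langle S_i,S_i\right\rangle=2=(e_i,e_i)$, and for $i\neq j$ we obtain
$$\left\langle S_i,S_j\right\rangle+\left\langle S_j,S_i\right\rangle=-\bigl(\#\{\text{arrows }i\to j\}+\#\{\text{arrows }j\to i\}\bigr),$$
which is the negative of the number of edges between $i$ and $j$ in the underlying graph. Because $Q$ is a simply laced Dynkin diagram, this equals $-1$ when $i$ and $j$ are adjacent and $0$ otherwise, matching exactly the off-diagonal entries of the Cartan matrix, i.e. $(e_i,e_j)$. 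This verifies the identity on a basis, hence in general. The only non-formal ingredient is the computation of $\Ext^1$ between simples in terms of arrows, which is the main point to pin down; once it is in hand, the remainder is purely bilinear bookkeeping.
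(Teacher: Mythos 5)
Your proof is correct. Note that the paper states this proposition without any proof at all (it is the standard fact that the symmetrized Euler--Ringel form is the Cartan--Tits form), so there is nothing to compare against; your argument --- descend to $K_0$, reduce by bilinearity to the simples, and compute $\left\langle S_i,S_j\right\rangle$ via $\Hom$ and the arrow count for $\Ext^1$ --- is exactly the standard one the authors are implicitly invoking. One small imprecision in your write-up: $\dim\Hom(-,-)$ and $\dim\Ext^1(-,-)$ are \emph{not} individually additive on short exact sequences ($\Hom$ is only left exact); what the long exact sequence
$$0 \to \Hom(C,F) \to \Hom(B,F) \to \Hom(A,F) \to \Ext^1(C,F) \to \Ext^1(B,F) \to \Ext^1(A,F) \to 0$$
actually gives (using heredity to terminate it) is that the alternating sum, i.e.\ the Euler form itself, is additive in each argument. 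That is all you need for the reduction to simples, so the slip does not affect the validity of the proof; the rest --- $\left\langle S_i,S_i\right\rangle=1$, $\left\langle S_i,S_j\right\rangle=-\#\{\text{arrows }i\to j\}$ for $i\neq j$, and the match with the Cartan matrix of a simply laced diagram after symmetrizing --- is accurate.
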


From now on we will study the representations of the $A_n$ quiver with the following orientation:


\begin{center}
\begin{tikzpicture}[scale=0.5]
 \draw[->,>=stealth] (1,1) node [left] {$\bullet$} node [below left] {1} -- (2,1) node [right] {$\bullet$} node [below right] {2};
 \draw[->,>=stealth] (3,1)-- (4,1) node [right] {$. . .$};
 \draw[->,>=stealth] (5.5,1)-- (6.5,1) node [right] {$\bullet$} node [below right] {$n$};
\end{tikzpicture}
\end{center}

\begin{definition}
An indecomposable representation $E$ is called {\it exceptional}, if
$\Ext^1(E,E) = 0,$ and a sequence $(E_1,\ldots,E_r)$ of exceptional representations is said to be
an {\it exceptional sequence}, if $$\Hom(E_j ,E_i) = 0 = \Ext^1(E_j ,E_i)\quad \mbox{\rm for}\quad j > i.$$
An exceptional sequence is {\it complete}, if $r = n.$
\end{definition}

\begin{lemma} \label{esdb}
1. Every indecomposable representation of the $A_n$ quiver is exceptional.

2. The complete exceptional sequences for $A_n$ quiver are in 1-to-1 correspondence with the distinguished bases of positive roots under Gabriel's bijection.
\end{lemma}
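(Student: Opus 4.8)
The plan is to prove the two assertions separately, using throughout that $\Rep_k A_n$ is hereditary, so that the only relevant Euler relation is $\left\langle E,F\right\rangle=\dim\Hom(E,F)-\dim\Ext^1(E,F)$, and that under Gabriel's bijection (Theorem \ref{Gab}) the indecomposable representations are exactly the interval modules $M_{[i,j]}$ attached to the positive roots $e_{ij}$. For part 1, I would first record that each interval module is a brick: a homomorphism $M_{[i,j]}\to M_{[i,j]}$ is given by one scalar per vertex, and commutation with the identity maps of the quiver forces all these scalars to coincide, so $\dim\Hom(M_{[i,j]},M_{[i,j]})=1$. Since the corresponding root $v=e_{ij}$ satisfies $\left\langle v,v\right\rangle=1$ (the Corollary to the Seifert form) and the Euler form agrees with the Seifert form, this gives $1=\left\langle E,E\right\rangle=\dim\Hom(E,E)-\dim\Ext^1(E,E)=1-\dim\Ext^1(E,E)$, whence $\Ext^1(E,E)=0$ and $E$ is exceptional.

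For part 2, the first step is to identify the Euler form with the Seifert form. Evaluating $\left\langle S_i,S_j\right\rangle=\delta_{ij}-\#\{\text{arrows }i\to j\}$ on the simple modules $S_i\leftrightarrow e_i$ of the oriented quiver $1\to\cdots\to n$ reproduces $\left\langle e_i,e_i\right\rangle=1$, $\left\langle e_i,e_{i+1}\right\rangle=-1$, and $\left\langle e_i,e_j\right\rangle=0$ otherwise, and bilinearity extends the identity to all of $K_0(\Rep_k A_n)$. The crucial lemma I would then establish is that for two interval modules the spaces $\Hom(M,N)$ and $\Ext^1(M,N)$ are \emph{never} simultaneously nonzero: an explicit description of maps and extensions shows that $\Hom(M_{[a,b]},M_{[c,d]})\neq 0$ forces $c\le a$, whereas $\Ext^1(M_{[a,b]},M_{[c,d]})\neq 0$ forces $c\ge a+1$, and these conditions are incompatible. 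This is the refinement of Lemma \ref{L1}, which only records the difference $\dim\Hom-\dim\Ext^1\in\{-1,0,1\}$.

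With this lemma the equivalence is immediate in both directions, applied to a collection $E_i\leftrightarrow a_i$. If $(E_1,\ldots,E_n)$ is an exceptional sequence, then $\Hom(E_j,E_i)=\Ext^1(E_j,E_i)=0$ for $j>i$ gives $\left\langle a_j,a_i\right\rangle=0$ for $j>i$, exactly the upper-triangularity condition. Conversely, $\left\langle a_j,a_i\right\rangle=0$ only yields $\dim\Hom(E_j,E_i)=\dim\Ext^1(E_j,E_i)$, and here the mutual-exclusivity lemma forces both to vanish, recovering the exceptional-sequence condition. Finally, to match the word ``basis'' I would note that $n$ roots $a_i$ satisfying the upper-triangularity condition are automatically linearly independent: the Gram matrix $(\left\langle a_i,a_j\right\rangle)$ is triangular with diagonal entries $\left\langle a_i,a_i\right\rangle=1$, hence has determinant $1$, so the $a_i$ form a basis, while conversely a distinguished basis is a basis by definition. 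I expect the only genuine work to be the mutual-exclusivity lemma for $\Hom$ and $\Ext^1$ of interval modules; everything else is bookkeeping on top of the already-established dictionary between the Seifert and Euler forms.
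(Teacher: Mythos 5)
Your proposal is correct and follows essentially the same route as the paper: part 1 via $\langle E,E\rangle=1$ and $\dim\Hom(E,E)=1$ forcing $\Ext^1(E,E)=0$, and part 2 via matching the Euler form with the Seifert form and observing that $\Hom$ and $\Ext^1$ between interval modules are never simultaneously nonzero (the paper defers this last point to Lemmas \ref{L1} and \ref{dbarcs}, whereas you prove it by direct computation with interval modules, and you also add the Gram-matrix argument for linear independence, which the paper leaves implicit).
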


\begin{proof}
Let $E$ be an indecomposable representation. Then, by Gabriel's theorem, $\left\langle E,E \right\rangle =1$ and $\dim \Hom(E,E)=1.$
Therefore, $\dim \Ext^{1}(E,E)=0$, so $E$ is exceptional.

One can check that the Euler form corresponds to the Seifert form on the root system under the Gabriel's bijection; therefore, every exceptional sequence corresponds to a distinguished basis. To prove the converse, one has to check that for indecomposable representations $E, F$ we have
$$\left\langle E,F\right\rangle=0\quad \Rightarrow\quad \Hom(E,F)=\Ext^{1}(E,F)=0.$$
This follows from Lemmas \ref{L1} and \ref{dbarcs}.
\end{proof}

Moreover, we can understand dimensions of extensions and morphism  spaces between objects in complete exceptional sequences in terms of the corresponding distinguished bases and, therefore, in terms of parking functions.

\begin{lemma}(compare with Lemma \ref{dbarcs}).
Consider two indecomposable representations $V, W$ such that $\left\langle W,V\right\rangle =0$ and the two corresponding vectors $a = e_{\In(a)}+\ldots+e_{\Ter(a)}, b = e_{\In(b)}+\ldots+e_{\Ter(b)}.$ Then the following statements hold:

\begin{enumerate}
\item[1)] 
If $\In(a)=\In(b)$ then $W$ is a quotient of $V$; if $\Ter(a)=\Ter(b)$ then $V$ is a subrepresentation of $W$.
In both cases $\dim \Hom(V,W)=1$; in the first case every nontrivial morphism from $V$ to $W$ is surjective, in the second case -- injective. In the other cases $\dim \Hom(V,W)=0.$
\item[2)]
$\dim \Ext^{1}(V,W) = \delta_{\Ter(a)+1}^{\In(b)}.$
\end{enumerate}
\end{lemma}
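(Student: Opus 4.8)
The plan is to transport the statement through Gabriel's bijection (Theorem~\ref{Gab}) to the interval-module model of the indecomposables, and then to read off $\Hom$ and $\Ext^1$ from that model, using the hypothesis $\langle W,V\rangle=0$ only to restrict the possible relative positions of the two supports. For the chosen orientation $1\to 2\to\cdots\to n$, the indecomposable $V$ attached to $a=e_{\In(a)}+\ldots+e_{\Ter(a)}$ is the interval module $M_{[\In(a),\Ter(a)]}$, with $k$ at each vertex of $[\In(a),\Ter(a)]$, the identity on each inner arrow and $0$ elsewhere; likewise $W=M_{[\In(b),\Ter(b)]}$. I would first record two routine facts about interval modules for this orientation. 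Since all arrows point to the right, the submodules of $M_{[p,q]}$ are exactly the right pieces $M_{[e,q]}$ and its quotients are the left pieces $M_{[p,f]}$. And a homomorphism $M_{[p,q]}\to M_{[r,s]}$ is a single scalar on the common support, forced to be constant there and to vanish against the boundary arrows; carrying out that boundary bookkeeping yields $\dim\Hom(M_{[p,q]},M_{[r,s]})=1$ when $r\le p\le s\le q$ and $0$ otherwise.

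Next I would cut down the configurations. As established in the proof of Lemma~\ref{esdb}, the Euler form corresponds to the Seifert form under Gabriel's bijection, so the hypothesis becomes $\langle b,a\rangle=0$ in the root lattice. Evaluating the Seifert form by Lemma~\ref{L1} --- this is the computation already used in Corollary~\ref{C2} --- the vanishing $\langle b,a\rangle=0$ rules out both crossing positions of the two intervals and also those endpoint-sharing or adjacency positions for which Lemma~\ref{L1} returns $\pm1$. What remains is: the supports are disjoint (arbitrary when $a$ lies to the left of $b$, but necessarily with a gap when $b$ lies to the left of $a$), or nested with $b\subsetneq a$ not sharing their right ends, or nested with $a\subsetneq b$ not sharing their left ends.

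Feeding these survivors into the $\Hom$ formula proves part (1). Indeed $\Hom(V,W)\neq 0$ requires $\In(b)\le\In(a)\le\Ter(b)\le\Ter(a)$, and among the allowed positions this occurs in exactly two ways. If $\In(a)=\In(b)$ then $b\subsetneq a$ with a common left end, so $W=M_{[\In(a),\Ter(b)]}$ is a left quotient of $V$ and the one-dimensional space of maps is spanned by the surjection. If $\Ter(a)=\Ter(b)$ then $a\subsetneq b$ with a common right end, so $V$ is a right submodule of $W$ and the maps are injective. The only other position that the bare $\Hom$ formula would allow is a crossing, which the previous step has already excluded; hence $\dim\Hom(V,W)=0$ in every remaining case.

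For part (2) I would argue directly with extensions. A non-split extension $0\to W\to E\to V\to 0$ with indecomposable middle term forces $E$ to be an interval module whose dimension vector is the sum of those of $V$ and $W$; the two intervals must therefore glue into a single interval with $W$ as the right submodule and $V$ as the left quotient, which is possible, and then unique up to scalar, precisely when $\In(b)=\Ter(a)+1$. This gives $\dim\Ext^1(V,W)=\delta^{\In(b)}_{\Ter(a)+1}$; as a check one may instead use $\dim\Ext^1(V,W)=\dim\Hom(V,W)-\langle a,b\rangle$ together with $\langle a,b\rangle=(a,b)-\langle b,a\rangle=(a,b)$ and Lemma~\ref{L1}. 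The only genuine work is the case analysis of the two preceding paragraphs: one must align the sign pattern of Lemma~\ref{L1} with the geometric position of the arcs and keep the orientation-induced asymmetry straight, namely that a shared left end yields a quotient (a surjection) while a shared right end yields a submodule (an injection). Everything else is the standard verification that interval modules have at most one-dimensional $\Hom$- and $\Ext^1$-spaces supported exactly as stated.
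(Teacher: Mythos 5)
The paper states this lemma without any proof (it is offered as a standard fact of quiver representation theory, flanked by Lemma \ref{esdb} and the corollary that follows), so there is no argument of the authors' to compare yours against; your proposal supplies the missing verification, and it is essentially correct. Your identification of the indecomposables with interval modules, the computation $\dim\Hom(M_{[p,q]},M_{[r,s]})=1$ iff $r\le p\le s\le q$, and the elimination of configurations via $\langle b,a\rangle=0$ and Lemma \ref{L1} are all right, and they do yield part (1) exactly as stated. The one soft spot is your primary argument for part (2): restricting attention to non-split extensions $0\to W\to E\to V\to 0$ with \emph{indecomposable} middle term does not by itself compute $\dim\Ext^1$, since non-split extensions of interval modules can have decomposable middle term (e.g.\ $0\to M_{[2,4]}\to M_{[1,4]}\oplus M_{[2,3]}\to M_{[1,3]}\to 0$ in the crossing position), and even in the gluing position one must still argue the extension space is exactly one-dimensional. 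What rescues you is the sentence you present as a mere ``check'': $\dim\Ext^1(V,W)=\dim\Hom(V,W)-\langle a,b\rangle$ together with part (1), hereditariness, and Lemma \ref{L1} is a complete and clean proof of part (2), and under the hypothesis $\langle b,a\rangle=0$ the crossing positions responsible for decomposable middle terms are excluded anyway. I would promote that Euler-form computation to the main argument for (2) and demote the extension-gluing picture to motivation.
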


\begin{corollary}
Consider an exceptional sequence $E = (E_1,\ldots,E_n)$ and two of its elements $E_i, E_j, i < j.$ Let $D(E)$ be corresponding parking function diagram obtained by combining Gabriel's bijection and the ``initial vector'' map. Let $P_k$ be the points on $D(E)$ (see Theorem \ref{T2}). Then the following statements hold:

\begin{enumerate}
\item[1)]
$E_j$ is a subrepresentation of $E_i$ iff $P_j$ and $P_i$ have the same $x$-coordinate. In this case, $\dim \Hom(E_i,E_j)=1$ and every nontrivial morphism from $E_i$ to $E_j$ is surjective.
\item[2)]
$E_i$ is a subrepresentation of $E_j$ iff the segment $P_j P_i$ goes strictly SW--NE and all its common points with $D(E)$ are $P_l,$ where either $l \leq i$ or $l = j.$
In this case, $\dim \Hom(E_i,E_j)=1$ and every nontrivial morphism from $E_i$ to $E_j$ is injective.
\item[3)]
$\dim \Ext^{1}(E_i,E_j) = 1$ iff there exists $P_k, k > i$ s.t. a segment $P_i P_l$ goes strictly SW--NE and all its common points with $D(E)$ are $P_l,$ where either $l \leq i$ or $l = k;$ and 
$P_k$ and $P_j$ have the same $x$-coordinate. Otherwise, $\dim \Ext^{1}(E_i,E_j) = 0.$
\item[4)]
$\dim \Hom(E_i,E_j) = 0$ except in cases 1) and 2).
\end{enumerate}
\end{corollary}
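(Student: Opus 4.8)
The plan is to deduce this corollary from the preceding lemma by a purely combinatorial translation, using two dictionary facts about the diagram $D(E)$. Write $a_i = e_{\In(a_i)}+\ldots+e_{\Ter(a_i)}$ and $a_j = e_{\In(a_j)}+\ldots+e_{\Ter(a_j)}$ for the roots attached to $E_i$ and $E_j$ under Gabriel's bijection, and fix $i<j$. First I would record that the hypothesis of the preceding lemma is automatically met: since $(E_1,\ldots,E_n)$ is an exceptional sequence, $\Hom(E_j,E_i)=\Ext^1(E_j,E_i)=0$, so the Euler form $\left\langle E_j,E_i\right\rangle$ vanishes, and by Lemma \ref{esdb} the Euler form agrees with the Seifert form under Gabriel's bijection, giving $\left\langle a_j,a_i\right\rangle=0$. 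Thus the preceding lemma applies with $V=E_i$, $W=E_j$ and expresses, in terms of the four numbers $\In(a_i),\Ter(a_i),\In(a_j),\Ter(a_j)$, that $\dim\Hom(E_i,E_j)=1$ precisely when $\In(a_i)=\In(a_j)$ (with $E_j$ a quotient of $E_i$ and the nonzero map surjective) or when $\Ter(a_i)=\Ter(a_j)$ (with $E_i$ a subrepresentation of $E_j$ and the nonzero map injective), is $0$ otherwise, and that $\dim\Ext^1(E_i,E_j)=\delta^{\In(a_j)}_{\Ter(a_i)+1}$. Note these two $\Hom$ conditions are mutually exclusive, since $\In(a_i)=\In(a_j)$ and $\Ter(a_i)=\Ter(a_j)$ together would force $a_i=a_j$.

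It then remains to rewrite these numerical conditions geometrically, for which I would use two facts. By the definition of the parking function diagram, the $x$-coordinate of $P_k$ equals $\In(a_k)-1$; by Theorem \ref{T2}, $\Ter(a_k)$ is the $x$-coordinate of the point where the strictly north-east path issued from $P_k$ first meets a corner $P_l$ with $l>k$, the Dyck path, or the $x$-axis. The first fact immediately yields case 1): $\In(a_i)=\In(a_j)$ holds iff $P_i$ and $P_j$ share an $x$-coordinate, and then the map $E_i\to E_j$ is the surjection furnished by the lemma. Combined with the dichotomy above this also yields case 4): outside cases 1) and 2) neither the $\In$'s nor the $\Ter$'s coincide, so $\dim\Hom(E_i,E_j)=0$.

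For case 2) I would translate $\Ter(a_i)=\Ter(a_j)$. Since $i<j$ and the two roots share a right endpoint, Proposition \ref{dbarcs}(2) forces the support of $a_i$ to be nested inside that of $a_j$, so $P_i$ lies strictly to the north-east of $P_j$ on a common slope-one line and the north-east path from $P_j$ runs through $P_i$; the requirement that this path not terminate before reaching $P_i$ is exactly the condition that every corner $P_l$ on the segment $P_jP_i$ has $l\le i$ (with $l=j$ only at the start), and beyond $P_i$ the two paths coincide, giving $\Ter(a_i)=\Ter(a_j)$. For case 3) I would use that $\dim\Ext^1(E_i,E_j)=1$ iff $\In(a_j)=\Ter(a_i)+1$, i.e.\ iff the $x$-coordinate of $P_j$ equals the terminal $x$-coordinate $\Ter(a_i)$ of the north-east path from $P_i$. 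When that path terminates at a corner $P_k$, necessarily with $k>i$, this reads precisely as: the segment $P_iP_k$ runs SW--NE with all intermediate corners labelled $\le i$, and $P_k$ shares an $x$-coordinate with $P_j$, which is the stated condition.

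The delicate point, on which I would spend the most care, is the boundary behaviour underlying case 3) together with the alignment claim in case 2). In case 3) one must verify that whenever $\In(a_j)=\Ter(a_i)+1$ the path from $P_i$ genuinely stops at a row corner $P_k$ with $k>i$ rather than exiting along the Dyck path or the $x$-axis, and conversely that a boundary exit forces $\dim\Ext^1(E_i,E_j)=0$; concretely this amounts to showing that a basis root can begin at $\Ter(a_i)+1$ only when the terminal lattice point of the path is itself such a corner $P_k$, which I would extract from Lemma \ref{L1} together with the linear independence built into Proposition \ref{dbarcs}. Likewise, in case 2) the subtle direction is that equality of the two terminal $x$-coordinates genuinely places the two paths on a common diagonal through $P_i$, rather than ending at the same $x$-coordinate by coincidence, and here the nesting supplied by Proposition \ref{dbarcs}(2) is the essential input. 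Once these boundary and alignment checks are in place, the four cases follow by direct substitution into the lemma.
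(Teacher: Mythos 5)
Your overall reduction is exactly the intended one (the paper states this corollary without proof, as an immediate consequence of the preceding lemma via Theorem \ref{T2} and the identification of the Euler form with the Seifert form), and your treatment of cases 1), 3) and 4) is sound: in particular your worry about case 3) resolves correctly, since when the north-east path from $P_i$ exits through the Dyck path or the $x$-axis the sorted row lengths jump over $\Ter(a_i)$, so no root can begin at $\Ter(a_i)+1$ and $\Ext^1$ vanishes.

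However, your argument for case 2) contains a genuine gap at the step ``beyond $P_i$ the two paths coincide, giving $\Ter(a_i)=\Ter(a_j)$.'' The two paths coincide only as geometric rays; their stopping rules differ, since the path from $P_i$ halts at the first corner $P_l$ with $l>i$ while the path from $P_j$ halts only at the first $P_l$ with $l>j$. A corner $P_m$ with $i<m<j$ lying beyond $P_i$ on the common ray therefore stops the first path but not the second, and the termini differ. Concretely, take the distinguished basis $(a_1,a_2,a_3)=(e_2,e_3,e_1+e_2+e_3)$ of $A_3$, with $\In(A)=(2,3,1)$ the maximal staircase diagram, and $(i,j)=(1,3)$: the segment $P_3P_1$ is a single strictly SW--NE step whose only common points with the diagram are $P_3$ and $P_1$ themselves (labels $j$ and $i$), so the stated geometric condition holds; yet $\Ter(a_1)=2\neq 3=\Ter(a_3)$, $E_1=M_{[2,2]}$ is not a subrepresentation of $E_3=M_{[1,3]}$, and $\Hom(E_1,E_3)=0$. (Here the path from $P_1$ stops at $P_2$, which lies beyond $P_1$ on the ray from $P_3$ and has label strictly between $1$ and $3$.) The condition characterizing $\Ter(a_i)=\Ter(a_j)$ must also constrain the corners on the ray \emph{beyond} $P_i$ up to the common terminus --- for adjacent indices $j=i+1$, as in Theorem \ref{mut}, no label can lie strictly between $i$ and $j$ and the difficulty disappears, but for general $i<j$ your argument (and the corollary's literal wording, which you faithfully reproduce) breaks down. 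Since case 4) is defined as the complement of cases 1) and 2), the same issue propagates there.
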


The following theorem is a reformulation of the Theorem \ref{Tmain}.

\begin{theorem}
There exists a filtration $F$ on $K_0(\Rep A_n)$ such that every complete exceptional sequence can be uniquely
reconstructed from the sequence of filtration levels of its representations, and such sequences are in one-to-one correspondence with the
parking functions.
\end{theorem}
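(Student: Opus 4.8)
The plan is to read the statement as a dictionary translation of Theorem \ref{Tmain} through Gabriel's bijection, so that the whole argument reduces to two tasks: exhibiting the filtration and identifying its levels with the ``initial point'' map. First I would use Gabriel's theorem (Theorem \ref{Gab}) to identify $K_0(\Rep A_n)$ with the $A_n$ root lattice, under which the class $[E]$ of each indecomposable representation $E$ becomes a positive root $e_{ij}$ and, as recorded in the proof of Lemma \ref{esdb}, the Euler form becomes the Seifert form. On this lattice I would take the decreasing filtration
$$
K_0(\Rep A_n)=F_1\supset F_2\supset\cdots\supset F_n\supset F_{n+1}=0,\qquad F_k=\Span(e_k,e_{k+1},\ldots,e_n),
$$
which is the filtration already announced in the introduction. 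A one-line check shows that $e_{ij}\in F_k$ precisely when $k\le i$, so the largest $k$ with $[E]\in F_k$ equals $\In(e_{ij})=i$; in other words the level of an indecomposable is exactly its initial point.

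With this in hand, the sequence of filtration levels of a complete exceptional sequence $(E_1,\ldots,E_n)$ is exactly the initial vector $\In(A)=(\In(a_1),\ldots,\In(a_n))$ of the distinguished basis $A=(a_1,\ldots,a_n)$ corresponding to it under Lemma \ref{esdb}(2). Composing that correspondence between complete exceptional sequences and distinguished bases of positive roots with the bijection $\In$ of Theorem \ref{Tmain} between distinguished bases and $\PF_n$, I would obtain a bijection between complete exceptional sequences and parking functions, and by the computation of the previous paragraph this bijection is precisely ``take the sequence of filtration levels.'' This yields both assertions at once: such sequences biject with $\PF_n$, and each is determined by its tuple of levels.

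The unique reconstruction is then just the injectivity part of Theorem \ref{Tmain}: since $\In$ is a bijection on distinguished bases, the tuple of levels determines the basis, hence, via Gabriel's bijection, the exceptional sequence. I expect no genuine obstacle, since nothing new is proved beyond what is contained in Lemma \ref{esdb} and Theorem \ref{Tmain}; the only point needing care is to verify that the filtration $F_k=\Span(e_k,\ldots,e_n)$ really recovers $\In$ as its level function, and that it is well defined on all of $K_0$ rather than merely on the set of roots. For the explicit inverse one can invoke the recursion in the proof of Theorem \ref{Tmain} (or the geometric procedure of Theorem \ref{T2}), which produces the representations of the exceptional sequence directly from the parking function, i.e.\ from the prescribed filtration levels.
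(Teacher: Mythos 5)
Your proposal matches the paper's proof: the paper takes exactly the filtration $F_k=\Span(S_k,\ldots,S_n)$ (the span of the simple representations corresponding to $e_k,\ldots,e_n$) and then applies Theorem \ref{Tmain}, just as you do. Your write-up merely makes explicit the routine verifications (that the filtration level of an indecomposable recovers $\In$, and that Lemma \ref{esdb}(2) transports the statement to distinguished bases) which the paper leaves implicit.
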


\begin{proof}
Consider a filtration $F=\{F_k\}$ such that $F_{k}$ is spanned by the simple representations $S_k,\ldots,S_n$ corresponding to the simple roots $e_k,\ldots, e_n$.
It remains to apply Theorem \ref{Tmain}.
\end{proof}

\begin{definition}
Let us call a distinguished basis $A$  {\it non-decreasing}, if the parking function $\In(A)$ is non-decreasing.
\end{definition}








\begin{lemma} \label{nddbarcs}
Non-decreasing distinguished bases correspond bijectively to ordered collections of $n$ pairwise non-intersecting arcs with properties 1), 3) and 4) from Proposition \ref{dbarcs} s.t. there are no pairs of arcs with the same right ends. Equivalently, non-decreasing distinguished bases correspond bijectively to unordered sets of non-intersecting arcs with the property 4) s.t. there are no pairs of arcs with the same 
right ends. 
\end{lemma}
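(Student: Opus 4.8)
The plan is to prove the substantive (unordered) statement directly and then read off the ordered one by recording the canonical order. Concretely, I would study the map $A\mapsto\{a_1,\dots,a_n\}$ that forgets the labelling data encoded in properties 1)--3) of Proposition \ref{dbarcs}, and show that it restricts to a bijection between non-decreasing distinguished bases and the unordered arc configurations in the statement. First I would check that this map lands in the target. If $A$ is a distinguished basis and two of its arcs shared a right end, then, being non-intersecting, those arcs are nested; the inner one has strictly larger initial point but, by property 2) of Proposition \ref{dbarcs}, strictly smaller label. Hence $\In(A)$ would have a descent, so $A$ could not be non-decreasing. Since the non-intersecting and acyclicity conditions (property 4)) are inherited from Proposition \ref{dbarcs}, a non-decreasing basis does produce a non-intersecting, acyclic arc set with no repeated right ends.

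For surjectivity I would construct the inverse explicitly. Given such an arc set $\mathcal S$, order its arcs by increasing initial point, breaking ties among arcs with a common initial point by containment (the larger arc first), and call the resulting sequence $A$. Properties 1) and 2) of Proposition \ref{dbarcs} then hold: the first by the tie-breaking rule, the second vacuously because no two arcs share a right end; property 4) is the assumed acyclicity. Property 3) holds as well, since if the right end of an arc $X$ equals the left end of an arc $Y$ then $\In(X)\le\Ter(X)=\In(Y)-1<\In(Y)$, so $X$ strictly precedes $Y$ in the chosen order. By Proposition \ref{dbarcs} the sequence $A$ is a distinguished basis, and it is non-decreasing by construction, with underlying set $\mathcal S$.

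Finally I would prove injectivity, which amounts to uniqueness of the admissible order and is the one point needing care. Suppose two non-decreasing distinguished bases have the same underlying set $\mathcal S$. Because both are non-decreasing they agree on the order across distinct initial-point values, so any discrepancy must occur inside a block of arcs sharing one fixed initial point. Within such a block all arcs share a left end and are pairwise non-intersecting, hence pairwise nested; since no two share a right end the nesting is a strict total order, and property 1) forces the order (outer arc first) uniquely. Thus the two bases coincide. This establishes the unordered bijection, and the ordered statement is the same correspondence with the (now canonical) non-decreasing order recorded, the hypothesis ``no pairs of arcs with the same right ends'' being exactly what makes property 2) vacuous and leaves only 1), 3), 4).

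The point to watch is the double role of the no-repeated-right-ends hypothesis: it is precisely the condition that removes the labelling freedom property 2) would otherwise create, and I must invoke it twice, once to make the constructed order well defined and compatible with 1)--4), and once to linearly order each equal-initial-point block so that the admissible order, and hence the non-decreasing basis attached to a given arc set, is unique.
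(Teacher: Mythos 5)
The paper states this lemma without proof, so there is no argument of the authors to compare against; your strategy is the natural one, and the substantive part of it --- the unordered bijection in the second sentence --- is proved correctly and completely. The forward direction (a repeated right end forces, via property 2), an inner arc with strictly larger initial point to carry a strictly smaller label, so $\In(A)$ fails to be non-decreasing), the explicit inverse (order by increasing initial point, outer arc first within each equal-initial-point block), the verification of 1), 2), 3) for that order, and the uniqueness argument are all sound.

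The gap is in your final step, where you assert that the ordered statement ``is the same correspondence with the canonical order recorded.'' Properties 1), 3), 4) together with the no-repeated-right-ends hypothesis do \emph{not} force the ordering to be your canonical one: two arcs that share no endpoint (disjoint supports, or strictly nested supports with neither end in common) are left unconstrained by 1)--3), so a single admissible arc set can carry several orderings satisfying 1), 3), 4). Concretely, for $A_3$ both $(e_{12},e_1,e_3)$ and $(e_{12},e_3,e_1)$ appear in the paper's list of distinguished bases; they have the same arc set $\{(0,2),(0,1),(2,3)\}$ with all right ends distinct, and both satisfy 1), 3), 4), yet $\In(e_{12},e_3,e_1)=(1,3,1)$ is not non-decreasing. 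Counting for $n=3$ gives six ordered collections of the kind described in the first sentence versus $c_3=5$ non-decreasing bases, so no bijection of the asserted form exists at all. The defect is really in the lemma's first sentence --- one must additionally require the arcs to be listed in non-decreasing order of initial points (equivalently, read the first sentence as the unordered statement with the canonical order attached), after which your argument does the job --- but as written your derivation of the ordered claim from the unordered one is a non sequitur, and the discrepancy should be flagged rather than absorbed into the phrase ``the same correspondence.''
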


\begin{definition}
{\it Non-decreasing exceptional collections} of type $A_n$ are exceptional collections of representations $X = \left\{X_1, \ldots, X_n \right\}$ of the $A_n$ quiver satisfying one of the following equivalent properties:

\begin{itemize}

\item[(i)] There are no monomorphisms $X_i \hookrightarrow X_j, i \neq j.$




\item[(ii)] If $\left\langle X_i, X_j \right\rangle \neq 0,$ then $X_i$ is not a subrepresentation of $X_j.$
\end{itemize}
\end{definition}




\begin{lemma}
Non-decreasing exceptional collections correspond bijectively to sets of arcs from Lemma \ref{nddbarcs}.
\end{lemma}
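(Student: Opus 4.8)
The plan is to transport everything to the combinatorial side through Gabriel's bijection together with the ``initial vector'' map, and then to match the two defining conditions directly. By Lemma \ref{esdb}, complete exceptional sequences for the $A_n$ quiver are in one-to-one correspondence with distinguished bases of positive roots, and by Proposition \ref{dbarcs} the latter correspond to ordered collections of $n$ pairwise non-intersecting arcs satisfying properties 1)--4). Under this dictionary an indecomposable representation $X$ becomes a root $a=e_{\In(a)}+\ldots+e_{\Ter(a)}$, that is, an arc with endpoints $(\In(a)-1,\Ter(a))$, so a non-decreasing exceptional collection is carried to some ordered arc collection; it remains to identify which ones.

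The key point is that, for indecomposable $A_n$-representations, a monomorphism corresponds precisely to a shared right endpoint of arcs. Indeed, for the representation attached to $a$ every indecomposable subrepresentation has support a terminal subinterval of $\supp(a)$ (the structure maps of the quiver force the support to be closed under moving right), hence shares the terminal point $\Ter(a)$; conversely, by the lemma computing $\Hom$ and $\Ext^1$ stated just before the filtration theorem, whenever $\Ter(a)=\Ter(b)$ one has $\dim\Hom=1$ with every nonzero morphism injective, so one of the two representations embeds in the other. Therefore a monomorphism $X_i\hookrightarrow X_j$ with $i\neq j$ exists if and only if the arcs of $X_i$ and $X_j$ have the same right end. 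This simultaneously explains the equivalence of conditions (i) and (ii) in the definition: such an inclusion forces $\dim\Hom=1$ while the same lemma gives $\dim\Ext^1(X_i,X_j)=\delta^{\In(X_j)}_{\Ter(X_i)+1}=0$ (since equal right ends make $\In(X_j)\neq\Ter(X_i)+1$), whence $\langle X_i,X_j\rangle=\dim\Hom-\dim\Ext^1\neq 0$, and conversely.

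Consequently the image of a non-decreasing exceptional collection under our dictionary is exactly an ordered arc collection satisfying properties 1)--4) in which no two arcs share a right end. By Lemma \ref{nddbarcs} these are precisely the arc collections attached to non-decreasing distinguished bases, and the same lemma identifies them, after forgetting the order, with unordered sets of non-intersecting arcs having property 4) and no pair of equal right ends. Composing the bijections of Lemma \ref{esdb}, Proposition \ref{dbarcs} and Lemma \ref{nddbarcs} then yields the desired bijection between non-decreasing exceptional collections and the sets of arcs of Lemma \ref{nddbarcs}.

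I expect the middle step to be the main obstacle: pinning down that ``monomorphism between distinct members'' corresponds exactly to ``equal right endpoints'' and to nothing more, and checking that the order recorded by an exceptional sequence is the canonical non-decreasing one, so that forgetting it loses no information. Both facts follow from the explicit $\Hom/\Ext$ computations and from the recovery of the order in Lemma \ref{nddbarcs}, but they are the places where the argument could go astray if the directions of the subrepresentation inclusions were mismatched.
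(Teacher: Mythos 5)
Your argument is correct: the paper states this lemma without proof, and it is clearly intended to follow, exactly as you argue, by composing the bijections of Lemma \ref{esdb}, Proposition \ref{dbarcs} and Lemma \ref{nddbarcs} with the observation (from the $\Hom$/$\Ext^1$ computation for the linearly oriented $A_n$ quiver) that a monomorphism between distinct indecomposables exists precisely when their arcs share a right endpoint. Your identification of condition (i) with the ``no equal right ends'' condition, and your handling of the ordered-versus-unordered issue via the second statement of Lemma \ref{nddbarcs}, supply precisely the details the paper leaves implicit.
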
 

\begin{corollary}
The following objects are in 1-to-1 correspondence to each other:
\begin{itemize}
\item[(i)] Non-decreasing distinguished bases of the root system $A_n;$

\item[(ii)] Young diagrams from $\mathbb{Y}_n;$

\item[(iii)] Dyck paths of the length $2n;$

\item[(iv)] Non-decreasing exceptional collections of type $A_n;$

\item[(v)] Sets of arcs described in Lemma \ref{nddbarcs}.

\end{itemize}
The cardinality of all these sets is equal to $n-$th Catalan number $c_n.$

\end{corollary}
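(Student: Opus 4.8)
The plan is to assemble the corollary entirely from bijections already established earlier in the note, arranging them into a single web routed through one central family: the non-decreasing parking functions on $n$ elements. Since each of the five families has already been matched, in some earlier result, either to this central family or to one of the others, the work is purely organizational.

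First I would record the link between family (i) and the non-decreasing parking functions. By definition a distinguished basis $A$ is non-decreasing precisely when $\In(A)$ is a non-decreasing parking function; hence Theorem \ref{Tmain} restricts to a bijection between family (i) and the set of non-decreasing parking functions. Next I would connect these with (ii) and (iii). The proposition characterizing non-decreasing parking functions shows that for such a function the labels on its diagram are equal to the $y$-coordinates of the rows (shifted by $n$), so the labels are forced by the shape. Consequently, forgetting the labels is a well-defined bijection from non-decreasing parking functions onto their underlying Young diagrams, that is, onto $\mathbb{Y}_n$; this gives (i) $\leftrightarrow$ (ii). By the very definition of $\mathbb{Y}_n$, the boundary of each diagram in $\mathbb{Y}_n$ is a Dyck path of length $2n$, and conversely every such path bounds a unique element of $\mathbb{Y}_n$; this standard fact, also recorded as part (ii) of that same proposition, yields (ii) $\leftrightarrow$ (iii).

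It then remains to close the loop through arcs and exceptional collections. The equivalence (i) $\leftrightarrow$ (v) is exactly Lemma \ref{nddbarcs}, and (iv) $\leftrightarrow$ (v) is the lemma stated immediately before the corollary. Chaining all five bijections through the non-decreasing parking functions establishes the full set of correspondences. For the cardinality count I would invoke the fact recorded earlier that $|\mathbb{Y}_n|=c_n$; since (ii) appears among the equivalent families, every set in the list has cardinality equal to the $n$-th Catalan number.

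As for the main obstacle: there is none of mathematical depth, because every individual equivalence is a prior result and the verification reduces to concatenating them. The only step demanding genuine care is (i) $\leftrightarrow$ (ii), where one must confirm that, restricted to the non-decreasing locus, the passage from a parking function to its parking function diagram and then to the underlying Young diagram is truly bijective, with no loss of information when the labels are discarded. This is precisely guaranteed by the cited characterization that the labels are determined by the shape, so no independent argument is required.
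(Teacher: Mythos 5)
Your proof is correct and follows exactly the route the paper intends: the corollary is stated there without proof precisely because it is the concatenation of Theorem \ref{Tmain} restricted to the non-decreasing case, the proposition characterizing non-decreasing parking function diagrams, Lemma \ref{nddbarcs}, and the lemma immediately preceding the corollary, together with the earlier remark that $|\mathbb{Y}_n|=c_n$. Your attention to the only delicate point --- that discarding the labels loses no information because they are forced by the shape --- is exactly the right check.
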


The set of exceptional sequences of representations of the $A_n$ quiver carries an action of the braid group (see \cite{GoRu,buan2,CB,R}) which corresponds to the above action under the  bijection from Lemma \ref{esdb}.

\section{Non-crossing partitions}

In this section, we reinterpret the results of \cite{stanley} in terms of the ``initial vector'' map.

\begin{definition} 
A {\it non-crossing partition} of the set $\{0,1,\ldots,n\}$ is a partition $\pi$ such that if $a<b<c<d$ and some block $B$ of $\pi$ 
contains both $a$ and $c$, while some block $B'$ of $\pi$ contains both $b$ and $d$, then $B=B'$.
\end{definition}

Non-crossing partitions form a partially ordered set: we say that $\pi\preceq \sigma$, if $\pi$ is a refinement of $\sigma$.

\begin{lemma}
Let $A=(a_1,\ldots,a_n)$ be a distinguished basis. Let us draw the set of arcs (see Proposition \ref{dbarcs}) corresponding to $a_1,\ldots,a_k$,
and define a partition $\pi_k$ of the set $\{0,1,\ldots,n\}$ into connected components. Then $\Pi(A)=\{\pi_k\}$ is a maximal chain of 
non-crossing partitions.
\end{lemma}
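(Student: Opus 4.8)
The plan is to verify the three features that characterize a maximal chain in the lattice of non-crossing partitions of $\{0,1,\dots,n\}$: that the chain runs from the bottom element to the top element, that consecutive partitions differ by a single block-merge, and that every $\pi_k$ is itself non-crossing. Throughout I use that the arc attached to the root $e_{ij}$ joins the marked points $i-1$ and $j$.

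First I would pin down the two ends of the chain. For $k=0$ no arcs are drawn, so $\pi_0$ is the partition into singletons, the bottom element of the lattice. For $k=n$ all $n$ arcs are present; by property (4) of Proposition \ref{dbarcs} they form a graph without cycles on the $n+1$ vertices $\{0,\dots,n\}$, hence a spanning forest with $n$ edges, which is therefore a tree and in particular connected. Thus $\pi_n$ is the one-block partition, the top element.

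Next I would analyze a single step. Passing from $\pi_{k-1}$ to $\pi_k$ only adds the arc of $a_k$, so $\pi_{k-1}\preceq\pi_k$. Since the entire collection of arcs is acyclic, so is every subcollection; hence the arc of $a_k$ must join two \emph{distinct} connected components of the graph built from $a_1,\dots,a_{k-1}$, for otherwise it would close a cycle. Consequently $\pi_k$ is obtained from $\pi_{k-1}$ by merging exactly two blocks, so $\pi_k$ has precisely $n+1-k$ blocks, and $\{\pi_k\}_{k=0}^{n}$ is a saturated chain of length $n$ running from the bottom to the top of the partition lattice.

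The crux is to show that each $\pi_k$ is non-crossing, which is the step I expect to require the most care. Here I would use that the arcs are pairwise non-intersecting: by Corollary \ref{C2} any two supports are nested or disjoint, so no two arcs cross. I would then invoke the standard fact that the connected components of a non-intersecting system of arcs form a non-crossing partition. Concretely, if some $\pi_k$ were crossing there would be $a<b<c<d$ with $a,c$ in one block and $b,d$ in another, hence arc-paths $\gamma_1$ from $a$ to $c$ and $\gamma_2$ from $b$ to $d$ lying above the real axis; the closed curve $\gamma_1\cup[a,c]$ separates $b$, which lies on the axis inside $[a,c]$, from $d$, which lies outside, so $\gamma_2$ would have to cross $\gamma_1$, contradicting non-intersection. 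The one technical point is that property (3) permits arcs to share an endpoint, so I would first perturb the arcs to generic curves meeting only at the marked points before applying this separation argument. Combining the three features, $\Pi(A)$ is a saturated chain of non-crossing partitions from bottom to top of length equal to the rank of the lattice, hence a maximal chain.
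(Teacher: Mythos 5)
Your proposal is correct and follows essentially the same route as the paper's proof: non-crossingness of each $\pi_k$ via the Jordan-curve/non-intersection argument, refinement of consecutive partitions, and maximality by counting ($n+1$ partitions starting from the singleton partition). You are merely more explicit than the paper about using acyclicity (property (4) of Proposition \ref{dbarcs}) to show each step merges exactly two blocks, and about the endpoint-sharing technicality, which is a welcome but not essential elaboration.
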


\begin{proof}
It follows from Lemma \ref{dbarcs} that $\pi_k$ is a non-crossing partition for all $k$: arcs do not intersect, hence if
$a<b<c<d$ and $a,c$ are connected while $b,d$ are connected too, then, by Jordan's theorem, $a,b,c$ and $d$ belong to the same connected component.

If two points are connected in $\pi_k$, then they are connected in $\pi_m$ for $m\ge k$; therefore, $\pi_k$ is a refinement of $\pi_m$.
In rests to note that every maximal chain contains $n+1$ non-crossing partitions and its $\preceq$-minimal element is a partition into singletons. 
\end{proof}

\begin{remark}
This lemma seems to be parallel to the constructions of \cite{buan2}, where connected components of the ``Hom--Ext'' quivers for 
exceptional collections were considered.
\end{remark}

Suppose that a partition $\sigma$ is an immediate successor of a partition $\pi$,
in other words, it is obtained from $\pi$ by merging two blocks $B$ and $B'$. Suppose that $\min (B)<\min (B')$, and define
$$\Lambda(\pi,\sigma)=\max\{i\in B|i<B'\}.$$

\begin{definition} 
Let $\Pi=\pi_0\prec \pi_1\prec\ldots \prec \pi_n$ be a maximal chain of non-crossing partitions.
Define
$$\Lambda(\pi)=(\Lambda(\pi_0,\pi_1),\ldots,\Lambda(\pi_{n-1},\pi_n)).$$
\end{definition}

\begin{theorem} (\cite{stanley}) The map $\Lambda$ is a bijection between the set of maximal chains of non-crossing partitions of the set $\left\{0,\ldots,n\right\}$and the set of parking function on $n$ elements.
\end{theorem}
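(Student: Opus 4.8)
The plan is to avoid attacking the non-crossing partitions directly and instead to factor $\Lambda$ through the distinguished-basis picture already developed. Write $\Pi$ for the map of the lemma above, sending a distinguished basis $A=(a_1,\ldots,a_n)$ of positive roots to the maximal chain $\pi_0\prec\cdots\prec\pi_n$ of connected components of its initial arcs. The central claim is the identity
$$\Lambda(\Pi(A))=\In(A)-(1,\ldots,1)\qquad\text{for every distinguished basis }A,$$
that is, $\Lambda\circ\Pi$ coincides, up to the global shift relating the two normalizations of parking functions (values in $\{0,\ldots,n-1\}$ versus $\{1,\ldots,n\}$), with the initial-vector map $\In$.

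To prove this identity I would examine a single step $\pi_{k-1}\prec\pi_k$. Adding the arc $a_k$, whose endpoints on the axis are $\In(a_k)-1$ and $\Ter(a_k)$, merges the block $B$ containing the left endpoint $p:=\In(a_k)-1$ with the block $B'$ containing the right endpoint $q:=\Ter(a_k)$; these are genuinely distinct blocks because the arcs carry no cycle (property (4) of Proposition \ref{dbarcs}). It then suffices to check two things: first, $\min(B')>p$, so that $B$ is the block of smaller minimum and $p=\max\{i\in B: i<\min(B')\}=\Lambda(\pi_{k-1},\pi_k)$; second, that no element of $B$ lies strictly between $p$ and $\min(B')$. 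Both are consequences of the labelling rules (1)--(3) of Proposition \ref{dbarcs} together with the fact that arcs are inserted in index order: if the component of $q$ reached a point $\le p$, or the component of $p$ reached a point strictly between $p$ and $\min(B')$, then tracing the connecting path back to the arc incident to the endpoint shared with $a_k$ would exhibit a nested or adjacent pair forcing that earlier arc to carry a label larger than $k$, contradicting that only $a_1,\ldots,a_{k-1}$ are present. This local analysis is the technical heart of the argument, and it is where I expect the real work to lie.

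Granting the identity, the theorem follows formally. By Theorem \ref{Tmain} the map $\In$ is a bijection from distinguished bases of positive roots to parking functions; since $\In=(\text{shift})\circ\Lambda\circ\Pi$, the map $\Pi$ is injective. Now both the set of distinguished bases and the set of maximal chains of non-crossing partitions of $\{0,\ldots,n\}$ have cardinality $(n+1)^{n-1}$ -- the former by Theorem \ref{Tmain} together with the Lyashko--Looijenga count, the latter by the classical evaluation of the number of maximal chains in the lattice $NC_{n+1}$. An injection between finite sets of equal cardinality is a bijection, so $\Pi$ is a bijection, whence $\Lambda=(\text{shift})^{-1}\circ\In\circ\Pi^{-1}$ is a composition of bijections and is therefore itself a bijection. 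Thus the only nonformal ingredient is the step-by-step verification of $\Lambda\circ\Pi=\In$; once it is in hand, bijectivity is a one-line cardinality count.
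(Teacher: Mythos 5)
The paper does not actually prove this statement: it is quoted from Stanley \cite{stanley} and used as a black box, so there is no internal proof to measure yours against. Read as an independent re-derivation of Stanley's theorem from the machinery of this paper, your argument is essentially sound and non-circular, since Theorem \ref{Tmain} is proved without any reference to non-crossing partitions. Your route overlaps heavily with the proof of the theorem that follows this one in the text (the one asserting that $\Pi$ is a bijection and that $\Lambda(\Pi(A))+(1,\ldots,1)=\In(A)$), but differs in two genuine ways. First, you establish the identity $\Lambda\circ\Pi=\In-(1,\ldots,1)$ in the forward direction, by analysing which blocks merge when the arc $a_k$ is inserted, whereas the paper runs the construction backwards, building the basis from the chain and reading off $\In(a_{k+1})=\Lambda(\pi_k,\pi_{k+1})+1$ along the way; the two are morally equivalent, but your local analysis is slightly underestimated in the sketch --- a path from $q$ to a point $\le p$ may first wander among arcs nested inside $a_k$ that share the right endpoint $q$, which rule (2) of Proposition \ref{dbarcs} permits, so one must isolate the first arc of the path that exits the interval $(p,q]$ and only then invoke non-crossing together with rules (2) and (3); this is completable but takes more than one step of ``tracing back to the shared endpoint''. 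Second, you obtain surjectivity of $\Pi$ by a cardinality count, importing Kreweras's enumeration of maximal chains of $NC_{n+1}$ as an external input; the paper instead proves surjectivity constructively, exhibiting for each maximal chain a distinguished basis mapping to it. The constructive route keeps the package self-contained (and yields Kreweras's count as a corollary rather than consuming it), while your count-based shortcut buys brevity at the cost of an extra classical theorem.
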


\begin{theorem}
The map $\Pi$ is a bijection between the set of distinguished bases and maximal chains of non-crossing partitions,
and $$\Lambda(\Pi(A))+(1,\ldots,1)=\In(A)$$
for a distinguished basis $A$.
\end{theorem}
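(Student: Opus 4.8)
The plan is to reduce the statement to a single numerical identity, proved arc by arc, and then deduce the bijectivity of $\Pi$ formally. Write $A=(a_1,\dots,a_n)$ and recall from the preceding lemma that $\Pi(A)=(\pi_0\prec\dots\prec\pi_n)$ is a genuine maximal chain, where $\pi_k$ records the connected components of the arcs attached to $a_1,\dots,a_k$; passing from $\pi_{k-1}$ to $\pi_k$ merges exactly the block $B_L$ containing the left end of the arc of $a_k$ with the block $B_R$ containing its right end. The heart of the argument is the claim that, for every $k$,
$$\Lambda(\pi_{k-1},\pi_k)=\In(a_k)-1.$$
Granting this, the identity $\Lambda(\Pi(A))+(1,\dots,1)=\In(A)$ is immediate. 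Bijectivity of $\Pi$ then follows formally: $\Pi$ is injective because $\In$ is injective by Theorem~\ref{Tmain} (if $\Pi(A)=\Pi(A')$, applying $\Lambda$ gives $\In(A)=\In(A')$, hence $A=A'$), and the set of distinguished bases and the set of maximal chains of non-crossing partitions both have cardinality $(n+1)^{n-1}$, so an injection between them is automatically a bijection.

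Everything therefore rests on the arc-by-arc claim. Fix $k$ and write $a_k=e_{ij}$, so its arc has left end $L=i-1=\In(a_k)-1$ and right end $R=j$; moreover $B_L\neq B_R$, since a cycle among $a_1,\dots,a_k$ is excluded by property (4) of Proposition~\ref{dbarcs}. I first locate $B_R$. Using properties (2) and (3), I will show that the only arcs among $a_1,\dots,a_{k-1}$ incident to $R=j$ are the arcs $e_{pj}$ with $p>i$, i.e.\ those nested inside $a_k$ and sharing its right end (property (2) forces their label below $k$, while property (3) excludes any arc having $j$ as a left end); each such arc joins $j$ to a point of the open interval $(L,R)$. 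A planar argument then confines the whole block: no earlier arc may cross $a_k$, so starting from $R$ one can never leave $[L,R]$, and since $L\in B_L\neq B_R$ one never reaches $L$. Hence $B_R\subseteq(L,R]$, and in particular $\min B_R>L$.

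Symmetrically I analyse $B_L$. Properties (1) and (3) force the earlier arcs incident to $L=i-1$ to be of two kinds only: arcs $e_{i,q}$ with $q>j$, sharing the left end $L$ but extending to the right of $R$, and arcs with right end $L$, reaching to the left of $L$; in either case the neighbour of $L$ lies outside $(L,R)$. Combining this with the no-crossing principle in a first-exit argument along any path out of $L$ shows that $B_L$ cannot meet $(L,R)$ at all, and $R\notin B_L$, so $B_L\cap[L,R]=\{L\}$. Consequently $\min B_L\le L<\min B_R$, so in the notation defining $\Lambda$ we have $B=B_L$ and $B'=B_R$; and every element of $B_L$ strictly below $\min B_R$ must lie in $[0,L]$, whence $\max\{x\in B_L:\ x<\min B_R\}=L$. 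This is precisely $\Lambda(\pi_{k-1},\pi_k)=\In(a_k)-1$, which completes the proof.

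I expect the main obstacle to be the two planar confinement statements, $B_R\subseteq(L,R]$ and $B_L\cap[L,R]=\{L\}$: one must convert ``the arcs do not intersect'', together with the label constraints (1)--(3), into these precise inclusions, taking particular care of the boundary cases in which a connecting path might touch $L$ or $R$ or attempt to escape across the newly drawn arc $a_k$.
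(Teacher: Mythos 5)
Your proposal is correct, but it inverts the logic of the paper's own argument. The paper proves surjectivity of $\Pi$ by explicitly constructing a preimage: given a maximal chain $\pi_0\prec\dots\prec\pi_n$, it builds the roots inductively via $a_{k+1}=e_{\Lambda(\pi_k,\pi_{k+1})+1}+\ldots+e_{\max(B')}$, checks (tersely) that the conditions of Proposition \ref{dbarcs} hold, and reads off $\In(a_{k+1})=\Lambda(\pi_k,\pi_{k+1})+1$ from the construction itself. You instead work entirely on the forward map: you prove the arc-by-arc identity $\Lambda(\pi_{k-1},\pi_k)=\In(a_k)-1$ by locating the two merged blocks relative to the new arc, then deduce injectivity of $\Pi$ from injectivity of $\In$ (Theorem \ref{Tmain}) and surjectivity from equality of cardinalities. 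Both routes are sound; your confinement statements $B_R\subseteq(L,R]$ and $B_L\cap[L,R]=\{L\}$ are exactly the right content, and the case analysis via properties (1)--(3) of Proposition \ref{dbarcs} together with Corollary \ref{C2} (no crossings) and acyclicity does close the boundary cases you flag. Two remarks: you could dispense with the cardinality count entirely, since the identity $\Lambda\circ\Pi=\In-(1,\dots,1)$ exhibits $\Pi$ as $\Lambda^{-1}\circ(\In-(1,\dots,1))$, a composition of the two known bijections (Stanley's theorem and Theorem \ref{Tmain}); and what the paper's route buys in exchange for its glossed-over verification is an explicit inverse formula for $\Pi$, whereas yours gets bijectivity essentially for free once the identity is established, at the cost of the more delicate planar analysis of the blocks.
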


\begin{proof}
Given a maximal chain of non-crossing partitions 
$$\pi_0\prec \pi_1\prec\ldots \prec \pi_n,$$ 
let us construct a distinguished basis by induction.
Suppose that we already constructed roots $a_1,\ldots,a_k$ such that $\pi_i$ is a partition into connected components of $\{a_1,\ldots,a_i\}$ for
$i\le k$. Suppose that the partition $\pi_{k+1}$ is obtained from $\pi_k$ by a merge of two blocks $B$ and $B'$ and
$\min(B)<\min(B')$, let us construct a root $a_{k+1}$ joining $B$ and $B'$ is a single connected component.

Consider the root
$$a_{k+1}=e_{\Lambda(\pi_k,\pi_{k+1})+1}+\ldots+e_{\max(B')}.$$
One can check that all conditions of Lemma \ref{dbarcs} are satisfied, and 
$$\left\langle a_{k+1},a_i\right\rangle=0, \quad \forall i\leq k.$$
Therefore, $\Pi$ is surjective and 
$$\In(a_{k+1})=\Lambda(\pi_k,\pi_{k+1})+1.$$
\end{proof}

\begin{remark}
In \cite{lya} and \cite{loo} Lyashko and Looijenga obtained the following general formula for the number of distinguished bases 
for a singularity $A_{\mu}$, $D_{\mu}$ and $E_{\mu}$:
\begin{equation}
\label{nbases}
N_{bases}=\frac{\mu!h^{\mu}}{|W|},
\end{equation}
where $h$ is the Coxeter number and $|W|$ is the order of the corresponding Weyl group.
The notion of the non-crossing partition was generalized to a general Coxeter group by Reiner and Athanasiadis (\cite{anata,reiner}),
and Reading and Chapoton (\cite{readchains,chapoton}) proved that the number of maximal chains of non-crossing partitions is given by the formula (\ref{nbases}).
We plan to study the possible generalizations of parking functions for general Dynkin quivers in the future research.
\end{remark}

\medskip

\end{document}